\newcommand{\bbC}{\ensuremath{\mathbb{C}}}
\newcommand{\bbR}{\ensuremath{\mathbb{R}}}
\newcommand{\bbQ}{\ensuremath{\mathbb{Q}}}
\newcommand{\bbF}{\ensuremath{\mathbb{F}}}
\newcommand{\bbZ}{\ensuremath{\mathbb{Z}}}
\newcommand{\frh}{\ensuremath{\mathfrak{h}}}
\DeclareRobustCommand{\tvdots}{
  \vbox{\baselineskip4\p@\lineskiplimit\z@\kern0\p@\hbox{.}\hbox{.}\hbox{.}}}
\DeclareMathOperator{\Aut}{Aut}
\numberwithin{equation}{section}
\newtheorem{thmnr}{Theorem}[section]
\newtheorem{propnr}[thmnr]{Proposition}
\newtheorem{lemnr}[thmnr]{Lemma}
\newtheorem{cornr}[thmnr]{Corollary}
\theoremstyle{definition}
\newtheorem{dfnnr}[thmnr]{Definition}
\newtheorem{rmknr}[thmnr]{Remark}
\newtheorem{exsnr}[thmnr]{Examples}
\newtheorem{claimnr}[thmnr]{Claim}
\newtheorem{probnr}[thmnr]{Problem}
\newtheorem{quesnr}[thmnr]{Question}
\begin{document}

\title{Towers of regular self-covers and linear endomorphisms of tori}

\author[W. van Limbeek]{Wouter van Limbeek}
\address{Department of Mathematics \\ 
                 University of Michigan \\
                 Ann Arbor, MI 48109}

\date{\today}

\begin{abstract} Let $M$ be a closed manifold that admits a self-cover $p:M\rightarrow M$ of degree $>1$. We say $p$ is strongly regular if all iterates $p^n:M\rightarrow M$ are regular covers. In this case, we establish an algebraic structure theorem for the fundamental group of $M$: We prove that $\pi_1(M)$ surjects onto a nontrivial free abelian group $A$, and the self-cover is induced by a linear endomorphism of $A$. Under further hypotheses we show that a finite cover of $M$ admits the structure of a principal torus bundle. We show that this applies when $M$ is K\"ahler and $p$ is a strongly regular, holomorphic self-cover, and prove that a finite cover of $M$ splits as a product with a torus factor.
\end{abstract}

\maketitle

\tableofcontents

\section{Introduction}
\label{sec:intr}

\subsection{Main results} 
Let $M$ be a closed manifold that admits a smooth covering map $p:M\rightarrow M$. We say $p$ is \emph{nontrivial} if deg$(p)>1$.

\begin{quesnr} Which closed manifolds $M$ admit a nontrivial self-covering?\end{quesnr}

Key examples are furnished by tori: If $M=T^n$ and $A$ is an integer $n\times n$-matrix, then the linear map $A:\bbR^n\rightarrow \bbR^n$ descends to a map $\overline{A}:T^n\rightarrow T^n$ with $\deg(\overline{A})=\det(A)$. Therefore a torus admits many nontrivial self-coverings.

More generally suppose $N$ is a simply-connected nilpotent Lie group and $\Gamma\subseteq N$ is a lattice, and $\alpha:N\rightarrow N$ is an automorphism with $\alpha(\Gamma)\subseteq \Gamma$. Then $\alpha$ descends to a map $N\slash\Gamma\rightarrow N\slash\alpha(\Gamma)$. Postcomposition with the natural projection $N\slash\alpha(\Gamma)\rightarrow N\slash\Gamma$ yields a self-cover of $N\slash\Gamma$ with degree $[\Gamma:\alpha(\Gamma)]$.

For some choices of $N$ and $\Gamma$, there exists $\alpha$ such that this construction yields a nontrivial self-cover (e.g. choosing $N$ (resp. $\Gamma$) to be the real (resp. integer) Heisenberg group). Belegradek has made a detailed investigation of these self-covers \cite{igornilp}, and in particular proved that some nilmanifolds do not nontrivially self-cover.

A modification of the above examples by finite covers gives nontrivial self-covers of flat (resp. infranil) manifolds, e.g. of the Klein bottle. Finally we can perform fiber bundle constructions with fibers of the types discussed above, and where the self-cover occurs along the fiber. For example, if $T$ is a torus, and $M\rightarrow N$ is a flat principal $T$-bundle such that the monodromy $\pi_1(N)\rightarrow \Aut(T)$ commutes with a self-cover $A:T\rightarrow T$, then $A$ induces a self-cover of $M$.

Are these all the examples (up to finite covers)? Progress on this question has been very limited. The only results are either in dimensions $\leq 3$ (by Yu-Wang \cite{3dim}, also see \cite{wangicm}), or for so-called \emph{expanding maps} (see Section \ref{sec:res}) and to some extent for algebraic maps on smooth projective varieties (see Section \ref{sec:kaehlerresult}). All of these results suggest that there are no other examples.

In this paper we start the classification of self-covers without placing geometric constraints on the map or the space. To this end we make the following definition:
\begin{dfnnr}
Let $X$ be a topological space. A self-cover $p:X\rightarrow X$ is called \emph{strongly regular} if the covers $p^n:X\rightarrow X$ are regular for every $n\geq 1$. 
\end{dfnnr} 
\begin{rmknr} It is easy to see that a characteristic cover (i.e. $p_\ast(\pi_1(M))\subseteq \pi_1(M)$ is characteristic) is strongly regular.\end{rmknr}

Our first result shows that at least on the level of fundamental groups, any strongly regular self-cover comes from a linear endomorphism on a torus. In fact this theorem does not just hold for smooth manifolds, but in the more general setting of finite CW complexes.
\begin{thmnr} Let $X$ be a finite CW complex, and let $p:X\rightarrow X$ be a continuous strongly regular self-cover with degree $1<d<\infty$. Set $\Gamma:=\pi_1(X)$ and let $\varphi:\Gamma\hookrightarrow\Gamma$ be the map induced by $p$. 

Then there exists a surjective morphism $q:\Gamma\rightarrow \bbZ^n$ for some $n\geq 1$, and such that $\varphi$ restricts to an automorphism of $\ker(q)$ and descends to a linear endomorphism of $\bbZ^n$ with determinant $d$.
	\label{thm:group}
	\end{thmnr}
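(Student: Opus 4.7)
The plan is to take $q$ to be the natural quotient $\Gamma \twoheadrightarrow \Gamma/H$, where
\[
H := \bigcap_{n \geq 0} \varphi^n(\Gamma),
\]
and to show that $\Gamma/H$ is free abelian of positive rank. Strong regularity makes each $\varphi^n(\Gamma)$ normal in $\Gamma$, so $H$ is normal. Injectivity of $\varphi$ allows it to commute with intersections, giving $\varphi(H) = \bigcap_n \varphi^{n+1}(\Gamma) = H$, so $\varphi|_H$ is an automorphism. The induced endomorphism $\bar{\varphi} \colon \Gamma/H \to \Gamma/H$ has image $\varphi(\Gamma)/H$ of index $d$, and is injective: if $\varphi(g) \in H$, then $\varphi(g) \in \varphi^n(\Gamma)$ for all $n \geq 1$, so $g \in \varphi^{n-1}(\Gamma)$ for all $n \geq 1$ by injectivity of $\varphi$, whence $g \in H$.

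Granted that $\Gamma/H$ is abelian, the remaining conclusions will follow easily. Finite generation is inherited from $\Gamma$, nontriviality comes from $d > 1$, and torsion-freeness is then forced: if $T \subseteq \Gamma/H$ is the torsion subgroup, then $\bar{\varphi}|_T$ is an injective endomorphism of a finite group, hence bijective, so $\bar{\varphi}(T) = T$. Pulling $T$ back to a subgroup $K \subseteq \Gamma$ containing $H$ yields $\varphi(K) H = K$; combined with $H = \varphi(H) \subseteq \varphi(K)$ (since $H \subseteq K$), this gives $\varphi(K) = K$, so iteratively $K \subseteq \varphi^n(\Gamma)$ for all $n$, forcing $K = H$ and $T = 0$. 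The resulting identification $\Gamma/H \cong \bbZ^n$ has injective descent $\bar{\varphi}$ of cokernel order $d$, so $|\det \bar{\varphi}| = d$ as required.

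The main obstacle is to prove that $\Gamma/H$ is abelian, equivalently that each finite group $Q_n := \Gamma/\varphi^n(\Gamma)$ of order $d^n$ is abelian. A useful piece of structure is that $\bar{\varphi}$ acts on $Q_n$ as a nilpotent endomorphism with $\bar{\varphi}^n = 0$: its kernel $\varphi^{n-1}(\Gamma)/\varphi^n(\Gamma)$ and its image $\varphi(\Gamma)/\varphi^n(\Gamma)$ are both (abstractly) isomorphic to $Q_1$, and iterating gives a filtration $Q_n \supset \bar{\varphi}(Q_n) \supset \cdots \supset \bar{\varphi}^{n-1}(Q_n) \supset 0$ all of whose successive quotients are copies of $Q_1$. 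The approach I would try is to exploit the compatible homomorphisms $\sigma_n \colon \Gamma \to \Aut(\Gamma)$ defined by $\sigma_n(g)(x) = \varphi^{-n}(g\,\varphi^n(x)\,g^{-1})$: strong regularity guarantees that $\sigma_n$ preserves the whole tower $\{\varphi^k(\Gamma)\}_k$, descending to compatible maps $\Gamma \to \Aut(Q_k)$ (with $\varphi^n(\Gamma)$ acting by inner automorphisms). The hope would be to chase commutation relations forced by these compatibilities, first to establish the base case that $Q_1$ is abelian, and then to deduce abelianness of $Q_n$ inductively using the extension $1 \to Q_{n-1} \to Q_n \to Q_1 \to 1$ induced by the inclusion $\varphi^n(\Gamma) \subseteq \varphi(\Gamma)$. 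Establishing the base case — abelianness of $Q_1$ — is where I expect the genuine difficulty of the theorem to lie.
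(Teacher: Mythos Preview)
Your reduction is the same as the paper's: set $H=\varphi^\infty(\Gamma):=\bigcap_n\varphi^n(\Gamma)$, note $\varphi|_H$ is an automorphism, and reduce everything to showing that $\Lambda:=\Gamma/H$ is abelian; your torsion-freeness argument, given abelianness, is correct and matches the paper's. So the architecture is right.

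The gap is exactly where you say it is, and your sketch does not close it. Your proposed maps $\sigma_n\colon\Gamma\to\Aut(\Gamma)$ and the inductive filtration of $Q_n$ by copies of $Q_1$ are purely group-theoretic and never invoke the hypothesis that $X$ is a \emph{finite CW complex}. But that hypothesis is the engine of the proof: without a bound coming from the topology of $X$, there is no reason the groups $Q_n=\Gamma/\varphi^n(\Gamma)$ should be abelian. (Indeed, dropping the finiteness hypothesis lands you in the territory of Nekrashevych--Pete's open question on strongly scale-invariant groups; a commutation-chasing argument valid for all such $\Gamma$ would be a major result, and there is no indication that such an argument exists.) Concretely, nothing in your setup rules out, say, $Q_1$ being a non-abelian $p$-group of large rank; the compatibilities of the $\sigma_n$ do not by themselves force commutativity.

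What the paper does instead is pass to the direct limit $F=\varinjlim Q_n$ (the ``asymptotic deck group''), realise $F$ as a group of homeomorphisms of $X$ via the deck actions, and then use the topology of $X$ to bound the $p$-rank of every abelian subgroup of $F$: any $(\bbZ/\ell\bbZ)^r\subseteq Q_N$ acts freely and cellularly on $X$ (for a suitable lifted CW structure), and the Carlsson--Baumgartner theorem then bounds $r$ in terms of $\dim X$ and $\dim_{\bbF_\ell}H_\ast(X;\bbF_\ell)$. This makes every abelian subgroup of $F$ Artinian, hence $F$ itself Artinian by Shunkov's criterion, hence virtually abelian by Shunkov and Kegel--Wehrfritz. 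The final step---upgrading ``virtually abelian'' to ``abelian''---uses the surjection $\alpha\colon F\to F$ induced by the quotient maps $Q_{n+1}\to Q_n$: one finds an $\alpha$-invariant finite-index abelian subgroup $A$, and since $\alpha$ is locally of finite order (every element of $F$ lies in some $Q_n$ and is killed by $\alpha^n$), applying a high power of $\alpha$ collapses all of $F$ into $A$. That local-finite-order trick is the clean replacement for the inductive commutation-chasing you were hoping for, but it only becomes available once Artinianness---and hence the topological rank bound---is in hand.
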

We have the following immediate corollary of Theorem \ref{thm:group}.
	\begin{cornr} Let $X$ be a finite CW-complex that admits a nontrivial strongly regular self-cover. Then $b_1(X)>0$. \label{cor:betti} \end{cornr}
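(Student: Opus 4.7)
The plan is to deduce this directly from Theorem \ref{thm:group}, since nearly all the work is already contained in that theorem. The point is that the existence of a surjection from $\pi_1(X)$ onto a free abelian group of positive rank immediately forces the first Betti number of $X$ to be positive.

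More concretely, I would first invoke Theorem \ref{thm:group} with the given nontrivial strongly regular self-cover $p : X \to X$ (here nontriviality ensures the degree $d$ satisfies $1 < d < \infty$, since $X$ being a finite CW complex makes $d$ finite). This produces an integer $n \geq 1$ and a surjective homomorphism $q : \pi_1(X) \twoheadrightarrow \mathbb{Z}^n$. Since the target is abelian, $q$ factors through the abelianization, yielding a surjection
\[
\bar{q} : H_1(X;\mathbb{Z}) = \pi_1(X)^{\mathrm{ab}} \twoheadrightarrow \mathbb{Z}^n.
\]
Taking ranks (equivalently, tensoring with $\mathbb{Q}$ and computing dimensions) gives
\[
b_1(X) = \mathrm{rank}_{\mathbb{Z}} H_1(X;\mathbb{Z}) \geq \mathrm{rank}_{\mathbb{Z}} \mathbb{Z}^n = n \geq 1,
\]
which is the desired conclusion.

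There is essentially no obstacle here beyond checking that the hypotheses of Theorem \ref{thm:group} apply, so this really is a corollary in the strict sense. The only mild point to verify is that ``nontrivial'' in the corollary (meaning $\deg(p) > 1$) combined with $X$ being a finite CW complex guarantees $1 < d < \infty$, matching the theorem's hypothesis; finiteness of $d$ follows from compactness of $X$, so the degree of any covering self-map is automatically finite.
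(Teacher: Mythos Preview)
Your proof is correct and matches the paper's approach exactly: the paper states this as an immediate corollary of Theorem~\ref{thm:group} with no further argument, and your deduction via the induced surjection on abelianizations is precisely the intended one-line justification.
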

	Recall that for a finite CW-complex $X$ with fundamental group $\Gamma$, any homomorphism $\Gamma\rightarrow \bbZ^n$ is induced by some continuous map $X\rightarrow T^n$, and two maps $X\rightarrow T^n$ are homotopic if and only if they induce the same map on fundamental groups. Therefore Theorem \ref{thm:group} has the following purely topological phrasing:
	\begin{cornr} Let $X$ be a finite CW-complex and $p:X\rightarrow X$ a nontrivial strongly regular self-cover of degree $1<d<\infty$. Then there exist
		\begin{itemize}
		\item a map $h:X\rightarrow T^n$ for some $n\geq 1$, 
		\item and a linear endomorphism $A:T^n\rightarrow T^n$ with determinant $d$
	\end{itemize}
such that the square
	$$\xymatrix{
			&X \ar[d]^{p} \ar[r]^{h}& T^n \ar[d]^A\\
			&X \ar[r]^h & T^n
			}$$
	commutes up to homotopy.
	\label{cor:htpy}
	\end{cornr}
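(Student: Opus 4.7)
The plan is to derive Corollary~\ref{cor:htpy} as a direct consequence of Theorem~\ref{thm:group}, combined with the fact that $T^n$ is an Eilenberg--MacLane space $K(\bbZ^n,1)$. First I would apply Theorem~\ref{thm:group} to $p$ to obtain a surjection $q:\Gamma\to\bbZ^n$ together with a linear endomorphism $\bar\varphi:\bbZ^n\to\bbZ^n$ of determinant $d$ satisfying the intertwining relation $q\circ\varphi = \bar\varphi\circ q$, where $\varphi:=p_*$.

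Next I would realize this diagram of groups topologically. The excerpt's recollection about maps into a $K(\bbZ^n,1)$ produces a continuous $h:X\to T^n$, unique up to homotopy, with $h_*=q$. For $A$, rather than appealing to the same recollection and only obtaining $A$ up to homotopy, I would take the self-map of $T^n=\bbR^n/\bbZ^n$ induced by the unique linear extension $\bbR^n\to\bbR^n$ of $\bar\varphi$; this $A$ is honestly a linear endomorphism of the torus, $A_*=\bar\varphi$, and $\deg(A)=\det(\bar\varphi)=d$.

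Finally I would verify homotopy-commutativity of the square. Both $h\circ p$ and $A\circ h$ are maps $X\to T^n$, and computing on fundamental groups gives $(h\circ p)_* = q\circ\varphi$ and $(A\circ h)_* = \bar\varphi\circ q$, which agree by the intertwining relation supplied by Theorem~\ref{thm:group}. Since two maps into a $K(\bbZ^n,1)$ are homotopic as soon as they induce the same homomorphism on $\pi_1$, the square commutes up to homotopy.

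I do not foresee a genuine obstacle: the substance of the corollary is entirely contained in Theorem~\ref{thm:group}, and the translation into a homotopy-commutative diagram is a formal consequence of the classifying-space interpretation of $T^n$. The only mild subtlety is to select $A$ as a bona fide linear self-map of the torus, not merely as some representative of its homotopy class; this is arranged by lifting $\bar\varphi$ through the universal cover $\bbR^n$ rather than invoking a black-box realization result.
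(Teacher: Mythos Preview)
Your proposal is correct and follows essentially the same route as the paper: the paper states the corollary as an immediate reformulation of Theorem~\ref{thm:group}, invoking precisely the fact that $T^n$ is a $K(\bbZ^n,1)$ so that homomorphisms $\pi_1(X)\to\bbZ^n$ correspond to homotopy classes of maps $X\to T^n$. Your added care in selecting $A$ as the genuinely linear map induced by $\bar\varphi$ on $\bbR^n/\bbZ^n$ (rather than merely some map in its homotopy class) is a welcome clarification that the paper leaves implicit.
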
 
Note that in Corollary \ref{cor:htpy}, $h$ is only determined up to homotopy. It is then natural to wonder if $h$ can be chosen to be of a particularly nice topological form. We do not know, but let us at least ask:
\begin{quesnr} Let $X$ be a closed smooth manifold and $p:X\rightarrow X$ a nontrivial strongly regular smooth self-cover. Can we choose $h$ to be a topological fiber bundle?\end{quesnr}
\begin{rmknr} Note that it is definitely not true that $h$ can be chosen to be a smooth fiber bundle: Namely, Farrell and Jones showed that there are tori with exotic smooth structures that admit expanding maps \cite{exoticexp}. Such a map is a (smooth) nontrivial strongly regular self-cover, and $h:X\rightarrow T$ is homotopic to the identity. In this case one can choose $h$ to be a homeomorphism but not a diffeomorphism.

%In dimension $>4$, Davis showed that a finite cover of an exotic torus is standard (Appendix to \cite{hrkhomanosov}), so it is still conceivable that after passing to a finite cover of $X$ we can choose $h$ to be a smooth fiber bundle. 
% Figure out if this is actually the case.... Maybe you can conjugate it (topologically) so that it can never be ...
\label{rmk:smooth}
\end{rmknr}
In the proof of Theorem \ref{thm:group}, it will be essential to study the actions of the deck groups $F_n$ of the covers $p^n:X\rightarrow X$. In fact, we will see that as subgroups of Homeo$(X)$, we have $F_n\subseteq F_{n+1}$ for every $n$. Hence we can obtain a locally finite group $F:=\cup_n F_n$ that acts on $X$. We call $F$ the \emph{asymptotic deck group} of $p:M\rightarrow M$ (see Section \ref{sec:group} for more information). Analysis of this group and its action will be the driving force behind the proof of Theorem \ref{thm:group}. We will now restrict to the case where $X=M$ is a closed manifold, so that we can introduce a special class of self-covers for which the action of $F$ on $M$ is well-behaved.
\begin{dfnnr} Let $p:M\rightarrow M$ be a nontrivial, strongly regular self-cover. We say $p$ is \emph{proper} if $F\subseteq \text{Homeo}(M)$ is precompact. If in addition $p$ is smooth, we say $p$ is \emph{smoothly proper} if $F$ is precompact in Diff$(M)$ with respect to the $C^\infty$-topology. \end{dfnnr}
For smoothly proper self-covers we have the following structure theorem.
\begin{thmnr} Let $M$ be a closed manifold and suppose $p:M\rightarrow M$ is a nontrivial, strongly regular, smoothly proper self-cover. Then there exist a finite cover $M'\rightarrow M$, and a compact torus $T$ such that
	\begin{enumerate}[(i)]
	\item $M'$ is a smooth principal $T$-bundle $\pi: M'\rightarrow N$ over some closed manifold $N$,
	\item $\pi_1(M') \cong \pi_1(T)\times \pi_1(N)$,
	\item $p$ lifts to a strongly regular self-cover $p':M'\rightarrow M'$,
	\item $p'$ is a bundle map (with respect to $\pi$) that descends to a diffeomorphism of $N$, and
	\item on each $T$-orbit in $M'$, $p'$ restricts to a covering of degree $\deg(p)$.
	\end{enumerate}
	\label{thm:proper}
\end{thmnr}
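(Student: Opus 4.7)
The plan is to set $K := \overline{F} \subseteq \text{Diff}^\infty(M)$ (compact by smooth properness), take $T := K^0$, and pass to a finite cover of $M$ on which $T$ acts freely. First, Theorem~\ref{thm:group} gives that $\varphi|_{\ker q}$ is an automorphism of $\ker q$, so $\varphi^m(\ker q) = \ker q$; combined with the exact sequence $0 \to \ker q \to \Gamma \to \bbZ^n \to 0$, this yields $F_m = \Gamma/\varphi^m(\Gamma) \cong \bbZ^n/A^m\bbZ^n$, which is abelian. Hence $F$ and $K$ are abelian compact Lie groups, and $T$ is a torus.

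Next, I would relate $p$ to the $K$-action. Since $M$ is connected and $p^m \circ f = p^m$ for $f \in F_m$, there is a unique $\alpha(f) \in F_{m-1}$ with $p \circ f = \alpha(f) \circ p$. This defines a surjective homomorphism $\alpha: F \to F$ with kernel $F_1$ that extends continuously to a Lie-group endomorphism of $K$ and restricts on $T$ to an isogeny $\alpha_T$ of degree $d := \deg(p)$ (once one verifies $F_1 \subseteq T$, possibly after a preliminary finite cover absorbing the component group of $K$).

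The crux is local freeness of the $T$-action on $M$. For this I would exploit the map $h: M \to T^n$ of Corollary~\ref{cor:htpy}. A computation using Theorem~\ref{thm:group} shows that the conjugation action of $F_m = \Gamma/\varphi^m(\Gamma)$ on $\varphi^m(\Gamma) \cong \Gamma$ descends to the trivial action on $\bbZ^n$, so (after adjusting $h$ within its homotopy class) one can arrange $h \circ f = h + t_f$ for a unique $t_f \in \ker(A^m) \subseteq T^n$, and the map $f \mapsto t_f$ gives an injection $F_m \hookrightarrow T^n$ with image $\ker(A^m)$. Taking closures produces a continuous homomorphism $\rho: K \to T^n$ for which $h$ is $K$-equivariant. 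Since $T^n$ acts freely on itself, the stabilizer $T_x$ of any $x \in M$ satisfies $\rho(T_x) = 0$, so local freeness reduces to showing $\ker(\rho|_T)$ is finite. This is the main obstacle of the proof, and I expect the smoothly proper hypothesis to enter here by preventing a positive-dimensional subgroup of $T$ from acting trivially on $T^n$ while moving points within fibers of $h$.

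Given local freeness, the remaining conclusions follow from the principal bundle structure. Pass to a finite cover $M' \to M$ corresponding to a $\varphi$-invariant subgroup of $\Gamma$ that trivializes the finite stabilizers of the $T$-action; then the lifted $T$-action on $M'$ is free, $p$ lifts to $p': M' \to M'$, and $\pi: M' \to N := M'/T$ is a smooth principal $T$-bundle. By $T$-equivariance via $\alpha_T$, $p'$ is a bundle map whose restriction to each fiber is the isogeny $\alpha_T$, a cover of degree $d$; since $\deg(p') = d$, the descended map on $N$ has degree $1$ and is a diffeomorphism, giving (iv) and (v). For (ii), the lift of $h$ to $M'$ restricts on each $T$-orbit to an isogeny onto its image in $T^n$, so $h_*$ induces an injection $\pi_1(T) \hookrightarrow \pi_1(T^n)$ on this subgroup; after enlarging $M'$ if needed so that this image is saturated in $\pi_1(T^n)$, one obtains a retraction $\pi_1(M') \to \pi_1(T)$ and thereby the splitting (ii) via centrality of $\pi_1(T)$ in $\pi_1(M')$ and the long exact sequence of the bundle.
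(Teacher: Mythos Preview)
Your proposal has a genuine gap precisely where you flag it: the local freeness of the $T$-action. You reduce this to showing $\ker(\rho|_T)$ is finite, but offer no mechanism. The difficulty is real: nothing you have written forces $\dim T = n$, and without that a positive-dimensional subtorus could sit in $\ker\rho$. (There is also a prior issue: arranging $h\circ f = h + t_f$ \emph{on the nose} rather than up to homotopy requires an averaging or obstruction argument over the infinite group $F$ that you have not supplied.) The smoothly proper hypothesis alone does not obviously prevent a subtorus from acting along fibers of $h$, since $h$ is merely continuous and its fibers have no a priori structure.

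The paper resolves this by a different route that avoids $h$ entirely. One passes to the cover $\widehat{M}\to M$ corresponding to the stable image $\varphi^\infty(\Gamma)$, with deck group $\Lambda\cong\bbZ^n$, and considers the group $G$ of all lifts of $\overline{F}$ to $\widehat{M}$, sitting in $1\to\Lambda\to G\to\overline{F}\to 1$. Conjugation by the lift $\hat{p}$ is an automorphism of $G$ restricting to $\overline{\varphi}$ on $\Lambda$. The substantive work is then purely Lie-theoretic: one shows $\overline{F}$ is connected (using that $\alpha$ is locally of finite order), that the natural embedding $D=\cup_n\overline{\varphi}^{-n}(\Lambda)\hookrightarrow \Lambda\otimes\bbR$ has dense image (an eigenvalue/Galois argument), that $G$ has finitely many components, and finally that $G^0$ is a \emph{vector group}. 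Freeness then drops out: the stabilizer of any point in $M'=\widehat{M}/(\Lambda\cap G^0)$ lifts to a compact subgroup of the vector group $G^0$, hence is trivial. This is the missing idea in your outline---replacing the target torus $T^n$ by the universal-cover-side group $G^0$, where compactness of stabilizers becomes decisive.
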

It seems reasonable to expect that if $p$ preserves some additional structure of a geometric nature, then $p$ will be smoothly proper. Below we will establish this in the case that $M$ is a K\"ahler manifold and $p$ is holomorphic (see Theorem \ref{thm:kaehler}).

\subsection{Holomorphic endomorphisms of K\"ahler manifolds} 
\label{sec:kaehlerresult}
Recall that if $M$ is a closed complex manifold, then a \emph{K\"ahler structure} on $M$ is a Hermitian metric whose associated 2-form is closed. 

For example, any smooth complex projective variety is a K\"ahler manifold.  If $X$ is a smooth complex projective variety, the natural class of self-covers $f:X\rightarrow X$ to consider are the \emph{\'etale} maps. \'Etale maps have been intensely studied, but a complete classification (up to finite \'etale cover) is only known in low dimensions \cite{projlow,projlow2}. In high dimensions the picture is more fragmented, see e.g. \cite{proj1, proj2} for more information. For nonalgebraic K\"ahler manifolds, a complete classification has been obtained in dimensions up to 3 by H\"oring-Peternell \cite{kaehlerlow}.  

We prove that strongly regular holomorphic self-covers on K\"ahler manifolds are smoothly proper. Combined with Theorem \ref{thm:proper}, this allows us to obtain a splitting with a torus factor of a finite cover of $M$:

\begin{thmnr} Let $M$ be a closed K\"ahler manifold and $p:M\rightarrow M$ a nontrivial, strongly regular, holomorphic self-cover. Then there exist a finite cover $M'\rightarrow M$ of $M$, a complex torus $T$ and a K\"ahler manifold $N$ such that
	\begin{enumerate}[(i)]
		\item $M'$ is biholomorphic to $T\times N$,
		\item $p$ lifts to a smoothly proper, strongly regular self-cover $p':M'\rightarrow M'$,
		\item $p'$ descends to a holomorphic automorphism of $N\cong M'\slash T$.
		\item on each slice $T\times \{x\}, x\in N$, the map $p'$ restricts to a holomorphic cover of degree $\deg(p)$.
	\end{enumerate}
	\label{thm:kaehler}
\end{thmnr}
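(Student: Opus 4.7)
The plan is to deduce Theorem~\ref{thm:kaehler} from Theorem~\ref{thm:proper}. To do so I must (i) verify that a holomorphic strongly regular self-cover of a K\"ahler manifold is automatically smoothly proper, (ii) upgrade the smooth conclusions of Theorem~\ref{thm:proper} to the holomorphic category, and (iii) further produce a biholomorphic splitting $M'\cong T\times N$ after possibly a further finite cover.

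First I would establish smooth properness. Since $p$ and every iterate is a holomorphic regular cover, all deck transformations are biholomorphisms, so the asymptotic deck group $F$ lies in $\Aut(M)$. By Bochner--Montgomery the group $\Aut(M)$ of biholomorphisms of a compact complex manifold is a finite-dimensional complex Lie group whose inclusion into $\mathrm{Diff}(M)$ is continuous in the $C^\infty$-topology, so it suffices to show $F$ has compact closure in $\Aut(M)$. The image of $F$ in $\Aut(H^*(M,\bbZ))$ is a locally finite subgroup of $\mathrm{GL}_N(\bbZ)$, hence finite by Minkowski's bound on torsion in $\mathrm{GL}_N(\bbZ)$; combined with Lieberman's theorem that the identity component $\Aut_0(M)$ is of finite index in the kernel of $\Aut(M)\to\Aut(H^*(M,\bbZ))$ for a compact K\"ahler $M$, this reduces the problem to showing $F\cap\Aut_0(M)$ is precompact in $\Aut_0(M)$. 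Fujiki's structure theorem presents $\Aut_0(M)$ as an extension of a complex subtorus of $\mathrm{Alb}(M)$ by a linear algebraic group $L$; the image of $F$ in the torus quotient clearly has compact closure, and I would control the residual part of $F$ inside $L$ using reductivity of $L$ for K\"ahler $M$ together with the freeness of the $F_n$-actions on $M$, which prevents finite-order elements from escaping to infinity.

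With smooth properness in hand, Theorem~\ref{thm:proper} furnishes a finite cover $M'\to M$, a compact torus $T$ acting smoothly and freely, a base $N=M'/T$, and a lift $p'$ satisfying (i)--(v) of that theorem. Because $T$ is the identity component of the closure of $F$ (which consists of biholomorphisms), the $T$-action on $M'$ is holomorphic, so $T$ is a compact complex subgroup of $\Aut_0(M')$, hence a complex torus. The quotient $N$ inherits a unique complex structure making $\pi$ a holomorphic principal $T$-bundle, and by Blanchard's theorem $N$ is K\"ahler. The map $p'$, being a lift of $p$ along a finite cover, is holomorphic and descends to a holomorphic automorphism of $N$, giving (iii); property (iv) is immediate from Theorem~\ref{thm:proper}(v) and holomorphy of $p'$.

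It remains to upgrade the smooth principal $T$-bundle $\pi:M'\to N$ to a holomorphic product $T\times N$. The identification $\pi_1(M')\cong\pi_1(T)\times\pi_1(N)$ from Theorem~\ref{thm:proper}(ii) forces the transgression $\pi_2(N)\to\pi_1(T)$ to vanish and the $\pi_1$-extension to split as a direct product; consequently the rational first Chern class of the bundle vanishes and the bundle becomes topologically trivial after a further finite \'etale cover. Since $M'$ is K\"ahler and the $T$-action is holomorphic, topological triviality of a principal complex-torus bundle with holomorphic action upgrades to holomorphic triviality (either via the $\partial\bar\partial$-lemma applied to the defining cocycle, or by invoking Borel--Remmert style results for compact K\"ahler manifolds with free holomorphic actions of compact complex tori), producing the required splitting. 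The main obstacle in this program is Step 1: a locally finite subgroup of a complex linear algebraic group need not be precompact, so showing that $F\cap L$ is bounded requires exploiting the K\"ahler hypothesis and the nested freeness of the $F_n$-actions in an essential way.
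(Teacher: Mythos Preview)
Your outline is broadly aligned with the paper, but Step~1 contains a real gap, and the paper closes it with a much simpler idea that you overlook. You try to control $F\cap L$ inside the linear algebraic part of $\Aut_0(M)$ and you correctly flag that a locally finite subgroup of a linear algebraic group need not be bounded; this is indeed where your argument stalls. What you are missing is that by the time one reaches this point, the paper has already proved (Lemma~\ref{lem:ab}) that the asymptotic deck group $F$ is \emph{abelian}. Hence $H:=\overline{F\cap\Aut^0(M)}$ is a closed \emph{abelian} subgroup of the connected Lie group $\Aut^0(M)$. W\"ustner's classification of such subgroups gives that the torsion of $\pi_0 H$ is finite; but $\pi_0 H$ is all torsion because $F$ is dense and torsion, so $H$ has finitely many components. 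The identity component $H^0$ is then a connected abelian Lie group, hence $T^k\times\bbR^l$, and the dense torsion subgroup $F\cap H^0$ projects trivially to $\bbR^l$, forcing $l=0$. No Fujiki structure theorem, no reductivity of $L$, and no appeal to freeness of the $F_n$-actions is needed here; the whole difficulty dissolves once you remember $F$ is abelian.

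For the splitting step your strategy also diverges from the paper. You propose to kill the Chern class topologically on a finite cover and then upgrade topological triviality to holomorphic triviality via the $\partial\bar\partial$-lemma or Borel--Remmert. This is plausible but the ``topological triviality implies holomorphic triviality'' step for principal complex-torus bundles is not a one-liner and would need a genuine argument. The paper instead invokes Blanchard/Biswas to get a \emph{flat holomorphic connection} on the principal $T$-bundle $M'\to N$, so that the bundle is classified by a monodromy $\rho:\pi_1(N)\to T$. Since $\mathrm{Hom}(\bbZ^k,T)\cong T^k$ is connected, any $\rho$ factoring through a free abelian group is isotopic to the trivial one; the paper then passes to a further finite cover $M''$ (compatible with $p'$ and the $T$-action) on which the monodromy factors through a torsion-free quotient of $\pi_1(N)^{\mathrm{ab}}$. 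This is more concrete than your proposed route and avoids the delicate passage from smooth to holomorphic triviality.
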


\subsection{Residual covers} \label{sec:res} Finally we will discuss applications of Theorem \ref{thm:group} to so-called expanding maps and strongly scale-invariant groups. First let us make the following definition.
	\begin{dfnnr} Let $p:M\rightarrow M$ be a self-cover. Consider the associated tower of covers
		$$\dots\overset{p}{\longrightarrow} M\overset{p}{\longrightarrow} M\overset{p}{\longrightarrow} M$$
	We say $p$ is \emph{residual} if this tower is residual, i.e. $\displaystyle\bigcap_{n\geq 1} p_\ast^n(\pi_1(M))=1$.
	\label{dfn:res}
	\end{dfnnr}
We give the following application of Theorem \ref{thm:group} to strongly regular residual self-covers:
	\begin{cornr} Let $X$ be a finite CW-complex admitting a strongly regular, residual self-cover $p$. Then $\pi_1(X)$ is abelian. In particular, if $X$ is in addition a closed aspherical manifold, then $X$ is homeomorphic to a torus.\label{cor:charres}\end{cornr}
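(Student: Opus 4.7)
The plan is to apply Theorem \ref{thm:group} directly, and then exploit a crucial feature of that theorem that has not yet been used: the map $\varphi = p_\ast$ restricts to an \emph{automorphism} (not merely an injection) of $\ker(q)$. This automorphism property, combined with the residuality hypothesis, will immediately force $\ker(q)$ to be trivial.

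More concretely, I would set $\Gamma := \pi_1(X)$ and apply Theorem \ref{thm:group} to obtain a surjection $q : \Gamma \twoheadrightarrow \bbZ^n$ with $n\geq 1$, such that $\varphi$ restricts to an automorphism of $K := \ker(q)$ and descends to a linear endomorphism of $\bbZ^n$ of determinant $d$. Since $\varphi(K) = K$, iteration gives $\varphi^m(K) = K$ for all $m \geq 1$, and in particular $K \subseteq \varphi^m(\Gamma)$ for every $m$. By residuality, $\bigcap_{m\geq 1} \varphi^m(\Gamma) = 1$, so $K = 1$. Hence $q$ is an isomorphism and $\Gamma \cong \bbZ^n$ is abelian, proving the first assertion.

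For the manifold statement, I would argue in two steps. If $X$ is a closed aspherical $m$-manifold, then $X$ is a finite-dimensional $K(\Gamma,1)$, so the cohomological dimension of $\Gamma \cong \bbZ^n$ equals $m$; this forces $n = m$. Consequently $X$ is homotopy equivalent to $T^n$. To promote this to a homeomorphism, I would invoke the topological rigidity of tori (the Borel conjecture for $\bbZ^n$), which is known in all dimensions by work of Hsiang--Wall and Farrell--Hsiang in high dimensions, Freedman--Quinn in dimension 4, and Perelman together with classical 3-manifold topology in dimension 3 (the low dimensions being direct).

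The argument is genuinely short once Theorem \ref{thm:group} is in hand, so there is no real obstacle internal to this corollary; the substantive content is entirely packaged into that theorem. The only place one needs to be slightly careful is in noting that the theorem gives an automorphism of $\ker(q)$ rather than merely an injection — this is the single input that makes the residuality hypothesis bite.
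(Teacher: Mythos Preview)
Your proof is correct and follows essentially the same route as the paper: apply Theorem \ref{thm:group}, use that $\varphi$ is an automorphism of $\ker(q)$ to conclude $\ker(q)\subseteq\bigcap_m\varphi^m(\Gamma)=1$, and then invoke the Borel conjecture for $\bbZ^n$. The only difference is that you spell out the cohomological-dimension step identifying $n$ with $\dim X$, which the paper leaves implicit.
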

A group-theoretic version of residual self-covers has been studied by Benjamini and Nekrashevych-Pete \cite{scaleinv}. Let $\Gamma$ be a finitely generated group. Then we define:
	\begin{itemize}
		\item (Benjamini) $\Gamma$ is called \emph{scale-invariant} if $\Gamma$ admits a decreasing residual chain
			$$\Gamma=\Gamma_0 \supseteq \Gamma_1 \supseteq \Gamma_2 \supseteq \dots$$
			where $\Gamma_n$ is of finite index in $\Gamma$ and $\Gamma_n\cong\Gamma$ for every $n\geq 0$.
			\item (Nekrashevych-Pete) $\Gamma$ is called \emph{strongly scale-invariant} if there exists an embedding $\varphi:\Gamma\hookrightarrow \Gamma$ with image of finite index, such that $\cap_n \varphi^n(\Gamma)=1$.
	\end{itemize}
Scale-invariant groups were introduced by Benjamini in a blog post, in which he asked if any such group is virtually nilpotent. Counterexamples were given by Nekrashevych-Pete \cite{scaleinv}, after which they defined the above notion of a strongly scale-invariant group, and asked whether a strongly scale-invariant group is virtually nilpotent. 

Hence Corollary \ref{cor:charres} can be interpreted as giving a positive answer to a topological version of the question of Nekrashevych-Pete, under the additional assumption that $\varphi^n(\Gamma)$ is normal in $\Gamma$ for every $n\geq 1$.

As a special case of residual self-covers we consider expanding maps:
\begin{dfnnr} Let $M$ be a closed manifold and choose an auxiliary Riemannian metric on $M$. Then a $C^1$ map $f:M\rightarrow M$ is \emph{expanding} if there exist $c>0$ and $\lambda>1$ such that for every $m\geq 1$ and $v\in TM$, we have
	$$||Df^m(v)||\geq c \lambda^m ||v||.$$
\end{dfnnr}
\begin{rmknr} Whether or not a map $f:M\rightarrow M$ is expanding does not depend on the choice of Riemannian metric on $M$.\end{rmknr}
Any expanding map $f:M\rightarrow M$ is a local homeomorphism, and hence a covering map. Expanding maps on closed Riemannian manifolds were classified by the combined work of Franks \cite{franksexp}, Shub \cite{shubexp}, and Gromov \cite{polygrowth}: Any expanding map on a closed manifold is topologically conjugate to an algebraic endomorphism of an infranilmanifold. 

By an approach that is different from the Franks-Shub-Gromov proof (in particular without use of Gromov's polynomial growth theorem), we show that strongly regular expanding maps are exactly given by linear endomorphisms of tori (up to topological conjugacy and finite covers).
	\begin{cornr}[Characterization of strongly regular expanding maps] Let $M$ be a closed manifold and $p:M\rightarrow M$ a $C^1$ strongly regular expanding map. Then $M$ is diffeomorphic to a torus and $p$ is topologically conjugate to a linear expanding map.\label{cor:charexp}\end{cornr}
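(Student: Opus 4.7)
The plan is to deduce the result from Corollary~\ref{cor:charres} and then upgrade the resulting homeomorphism with a torus to a genuine topological conjugacy with a linear map. The first step is to verify that every expanding self-cover $p:M\to M$ is residual in the sense of Definition~\ref{dfn:res}. Choose a Riemannian metric on $M$, lift it to the universal cover $\tilde M$, and let $\tilde p:\tilde M\to\tilde M$ be a lift of $p$. The expanding condition makes $\tilde p$ a homeomorphism whose $n$-th iterate inverse contracts distances by a factor of at most $c^{-1}\lambda^{-n}$. For any $\gamma\in p_\ast^n(\pi_1(M))$ write $\gamma=p_\ast^n(\beta_n)$; the intertwining identity $\tilde p\circ\beta=p_\ast(\beta)\circ\tilde p$ on deck transformations gives $\beta_n=\tilde p^{-n}\,\gamma\,\tilde p^n$, whence the translation length satisfies $\ell(\beta_n)\leq c^{-1}\lambda^{-n}\ell(\gamma)$. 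Since $\pi_1(M)$ acts on $\tilde M$ freely, cocompactly, and isometrically, the infimum of $\ell$ over nontrivial elements is positive, forcing $\beta_n=1$ for $n$ large and therefore $\gamma=1$.

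Next I would invoke Shub's classical theorem that a closed manifold admitting an expanding self-map has universal cover $\bbR^n$ and is therefore aspherical. This portion of Shub's argument is independent of Gromov's polynomial growth theorem. Combined with the first step, Corollary~\ref{cor:charres} now applies and yields a homeomorphism $M\cong T^n$; in particular $\pi_1(M)\cong\bbZ^n$.

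To obtain topological conjugacy, apply Corollary~\ref{cor:htpy} to produce $h:M\to T^m$ and a linear endomorphism $A:T^m\to T^m$ with $\det A=d$ and $h\circ p\simeq A\circ h$. Identifying $\pi_1(M)\cong\bbZ^n$, the map $h_\ast$ is the surjection $q:\bbZ^n\twoheadrightarrow\bbZ^m$ of Theorem~\ref{thm:group}, whose kernel $K$ is $p_\ast$-invariant with $p_\ast|_K$ an automorphism. On the other hand $\tilde p$ is expanding and, as a lift of a self-map of $T^n$, it lies at bounded distance from its linear part $p_\ast:\bbR^n\to\bbR^n$; hence every eigenvalue of $p_\ast$ has modulus $>1$. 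A $\bbZ$-automorphism of $\bbZ^k$ has determinant $\pm1$ and so eigenvalues whose product has modulus $1$, which forces $K=0$. Thus $q$ is an isomorphism and $h$ may be replaced by a homeomorphism $M\to T^n$ intertwining $p$, up to homotopy, with the linear expanding map $A$. The conclusion then follows from Shub's classical linearization theorem: any self-map of $T^n$ homotopic to a linear expanding endomorphism is topologically conjugate to it, proved by a contraction-mapping argument on the Banach space of continuous $\bbZ^n$-equivariant maps $\bbR^n\to\bbR^n$ at bounded sup-distance from the identity, again avoiding any appeal to polynomial growth.

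The main obstacle is the asphericity required to apply Corollary~\ref{cor:charres}; this is the one place where a genuinely dynamical input specific to expanding maps enters, via Shub's theorem on the universal cover. The remainder consists of the residuality estimate, the algebraic matching supplied by Theorem~\ref{thm:group}, and the standard linearization step.
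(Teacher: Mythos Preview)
Your proof is correct and follows essentially the same route as the paper: establish residuality of $p$, invoke Shub's argument that $\widetilde M$ is contractible, apply Corollary~\ref{cor:charres} to conclude $M$ is homeomorphic to a torus, and finish with Shub's linearization theorem. Your translation-length computation for residuality is equivalent to the paper's orbit argument. Your third paragraph is slightly redundant: once Corollary~\ref{cor:charres} has been invoked, its proof already yields $\ker(q)=0$ from residuality, so re-deriving $K=0$ via the eigenvalue argument is unnecessary---though that argument is not wasted, since it supplies the verification that $p_\ast$ is a linear \emph{expanding} endomorphism, a point the paper asserts without detail.
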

	
		\subsection{Outline of the paper} In Section \ref{sec:locfin} we review some preliminaries regarding locally finite groups, especially the theory of Artinian locally finite groups. In Section \ref{sec:group} we prove Theorem \ref{thm:group} that establishes the connection between strongly regular self-covers factor and linear endomorphisms of tori. 
		
		We start by introducing the key object for the proof, the asymptotic deck group $F$ associated to a strongly regular self-cover. The key result is that $F$ is abelian and in fact a finite product of finite cyclic groups and quasicyclic groups. This allows us to construct the desired map $\pi_1(X)\rightarrow \bbZ^n$.
		
		In Section \ref{sec:proper} we prove Theorem \ref{sec:proper} that shows that a smoothly proper self-cover is virtually given by a principal torus bundle. By using the Hilbert-Smith conjecture for smooth maps, we know that the closure of the asymptotic deck group is a compact Lie group whose components are tori. Most of the proof is devoted to showing that there is a finite cover where the torus action is free. The quotient by this free action is the desired principal bundle.
		
		In Section \ref{sec:kaehler} we combine the above characterization of smoothly proper self-covers with K\"ahler geometry to show that a K\"ahler manifold with a strongly regular, holomorphic self-cover has a finite cover that splits with a torus factor. Finally in Section \ref{sec:respf} we prove Corollaries \ref{cor:charres} and \ref{cor:charexp} about residual self-covers and expanding maps.
		
		\subsection{Acknowledgments:} I would like to thank Jim Davis, Benson Farb, Ralf Spatzier, Bena Tshishiku, Zhiren Wang and Shmuel Weinberger for helpful conversations. I am grateful to David Fisher for alerting me to the notion of scale-invariant groups. Part of this work was completed at the University of Chicago. I am grateful for its financial support.
	
\section{Preliminaries on locally finite groups}
\label{sec:locfin}
A key tool in the proofs of our results on self-covers is the emergence of a locally finite group. In this section we review the definition and facts needed for the rest of the paper. For more information we refer to \cite{locfin}.

\begin{dfnnr} A group $G$ is \emph{locally finite} if any finitely generated subgroup is finite.\end{dfnnr}
\begin{rmknr} As any group is the direct limit of its finitely generated subgroups, a locally finite group is the direct limit of a system of finite groups. Conversely, a direct limit of a system of finite groups is easily seen to be locally finite.\end{rmknr}
\begin{exsnr}\mbox{}
\begin{enumerate}[(i)]
	\item Any finite group is locally finite.
	\item Any direct sum of finite groups is locally finite.
	\item $\bbQ\slash \bbZ$ is locally finite. This is the group of complex roots of unity.
	\item Let $p\geq 2$ be a prime. The \emph{$p$-quasicyclic group} (or  \emph{Pr\"ufer $p$-group}) is the group $\bbZ(p^\infty):=\bbZ[\frac{1}{p}]\slash \bbZ$. This is the group of complex roots of unity whose order is a power of $p$.
\end{enumerate}
\end{exsnr}
An especially well-studied class of locally finite groups is formed by those that are Artinian. Recall that a group $G$ is called \emph{Artinian} if any nonincreasing chain 
$$G=G_0\supseteq G_1 \supseteq G_2\supseteq \dots$$
of subgroups of $G$ eventually stabilizes. This is also called the \emph{minimality condition}.

Among the above examples, the finite groups and quasicyclic groups are Artinian, whereas $\bbQ\slash \bbZ$ and an infinite direct sum of nontrivial finite groups are not Artinian. In 1940, Chernikov posed:

\begin{probnr}[Chernikov's Minimality Problem] Is any Artinian group a finite extension of an abelian Artinian group? \label{prob:min} \end{probnr}

In general this is false, as shown by Ol'shanskii's construction of Tarski monsters \cite{tarski}. In view of the above Minimality Problem, it seems useful to characterize abelian Artinian groups. This was accomplished by Kurosh:
	\begin{thmnr}[{Kurosh, see e.g. \cite[4.2.11]{robingps}}] 
	Let $G$ be an abelian group. Then $G$ is Artinian if and only if $G$ is a finite direct sum of quasicyclic and finite cyclic groups. \label{thm:chernartin}
	\end{thmnr}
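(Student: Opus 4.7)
For the easy direction ($\Leftarrow$), the plan is first to verify that each building block is Artinian: finite cyclic groups trivially, and $\bbZ(p^\infty)$ because its proper subgroups are totally ordered by inclusion, namely the cyclic groups $\frac{1}{p^n}\bbZ\slash \bbZ$ for $n \geq 0$. I would then check that a finite direct sum of Artinian abelian groups is Artinian: given a descending chain $H_0 \supseteq H_1 \supseteq \dots$ in $A \oplus B$, the projections $\pi_A(H_n)$ and the intersections $H_n \cap B$ form descending chains in $A$ and $B$ respectively; both stabilize, and a short argument then forces $H_n = H_{n+1}$ eventually.

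For the hard direction ($\Rightarrow$), the plan proceeds in three stages. First, $G$ must be a torsion group: if $g\in G$ has infinite order, then $\langle g\rangle \supsetneq \langle 2g\rangle \supsetneq \langle 4g\rangle \supsetneq \dots$ is an infinite strictly descending chain. Hence $G = \bigoplus_p G_p$ decomposes into its $p$-primary components, and only finitely many summands can be nontrivial: otherwise, picking primes $p_1, p_2, \dots$ with $G_{p_i} \neq 0$, the tails $\bigoplus_{i \geq n} G_{p_i}$ form an infinite strictly descending chain. This reduces the problem to an Artinian abelian $p$-group $G$.

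Second, for such a $p$-group the socle $G[p] := \{x \in G : px = 0\}$ is an $\bbF_p$-vector space that inherits DCC on subspaces, hence is finite-dimensional; set $d := \dim_{\bbF_p} G[p]$. Splitting $G = D \oplus R$ with $D$ the maximal divisible subgroup and $R$ reduced, any divisible torsion $p$-group is a direct sum of copies of $\bbZ(p^\infty)$, and since $D[p] \subseteq G[p]$ the number of summands is at most $d$, so $D \cong \bbZ(p^\infty)^k$ with $k \leq d$. It remains to show $R$ is finite. Because $R$ is reduced, $\bigcap_n p^n R = 0$; the chain $R \supseteq pR \supseteq p^2 R \supseteq \dots$ must stabilize by DCC, so $p^N R = 0$ for some $N$, i.e., $R$ has bounded exponent. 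A classical theorem of Pr\"ufer then expresses $R$ as a direct sum of cyclic $p$-groups, and the finite socle forces only finitely many summands, so $R$ is a finite direct sum of finite cyclic $p$-groups.

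The main obstacle is the finiteness of the reduced part $R$. The crux is the implication ``reduced plus DCC $\Rightarrow$ bounded exponent,'' which collapses the chain $p^n R$ to zero and opens the door to Pr\"ufer's decomposition. Without the DCC, reduced abelian $p$-groups with even one-dimensional socle can be genuinely wild (for instance the $p$-adic integers $\bbZ_p$), so the Artinian hypothesis is doing essential work beyond merely controlling the socle.
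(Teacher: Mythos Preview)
The paper does not prove this statement: Theorem~\ref{thm:chernartin} is quoted as a classical result of Kurosh with a reference to Robinson's textbook, and no proof is given. So there is nothing in the paper to compare your argument against.

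That said, your plan is the standard one and is essentially correct. One step deserves a cleaner justification. You write ``Because $R$ is reduced, $\bigcap_n p^n R = 0$,'' and then use DCC to make the chain stabilize. As stated, the first implication is false for abelian $p$-groups in general: there exist reduced abelian $p$-groups whose first Ulm subgroup $\bigcap_n p^n R$ is nonzero (e.g.\ the group generated by $a, b_1, b_2,\dots$ with $pa=0$ and $p^n b_n = a$). The correct order of the argument is the reverse: by DCC the chain $R\supseteq pR\supseteq p^2R\supseteq\cdots$ stabilizes at some $p^N R$, so $p^N R = p\cdot p^N R$ is $p$-divisible, hence divisible (we are in a $p$-group), and therefore zero because $R$ is reduced. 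Your concluding paragraph shows you understand that both hypotheses are needed here, so this is a matter of presentation rather than a real gap; just be careful not to invoke ``reduced $\Rightarrow \bigcap_n p^n R = 0$'' as a standalone fact.

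Everything else (torsion, finitely many primary components, finite socle, divisible--reduced splitting, Pr\"ufer's theorem for bounded $p$-groups) is fine.
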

For general locally finite groups, Shunkov gave the following criterion for being Artinian:
	\begin{thmnr}[{Shunkov \cite{abartin}}] Let $G$ be a locally finite group. Then $G$ is Artinian if and only if any abelian subgroup of $G$ is Artinian. 
	\label{thm:abartin}
	\end{thmnr}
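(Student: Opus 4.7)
The forward direction is immediate: if $G$ is Artinian then every subgroup of $G$, abelian or not, is Artinian, since a descending chain inside a subgroup is a descending chain inside $G$.

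For the converse, I would argue by contradiction. Suppose every abelian subgroup of $G$ is Artinian but $G$ itself is not. From a strictly descending chain $G = G_0 \supsetneq G_1 \supsetneq G_2 \supsetneq \dots$ I would pick $g_i \in G_{i-1} \setminus G_i$, so that the countable subgroup $H := \langle g_1, g_2, \dots \rangle$ is infinite and (as a subgroup of $G$) locally finite. The goal is to produce an abelian subgroup of $H$ that fails to be Artinian, which by Kurosh's Theorem \ref{thm:chernartin} means producing an abelian subgroup that is \emph{not} a finite direct sum of cyclic and quasicyclic groups, contradicting the standing hypothesis.

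The first ingredient is the theorem of Hall--Kulatilaka that any infinite locally finite group contains an infinite abelian subgroup $A$. This alone is insufficient, since $A$ could be quasicyclic and hence still abelian Artinian. To refine, I would split into cases according to how many primes divide the orders of elements of $H$. If infinitely many primes $p_1 < p_2 < \dots$ arise, I would iteratively construct commuting elements $x_i$ of order $p_i$ by passing to centralizers $C_H(x_1,\dots,x_{k-1})$, which remain infinite and locally finite; the resulting $\bigoplus_i \langle x_i \rangle$ is an abelian subgroup that is manifestly non-Artinian. If only finitely many primes occur, some Sylow $p$-subgroup of $H$ is infinite, reducing the problem to locally finite $p$-groups.

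The main obstacle is precisely this reduction: infinite locally finite $p$-groups can behave wildly, and isolating inside such a group an abelian subgroup that is visibly non-Artinian (i.e. contains either an infinite elementary abelian piece or infinitely many independent quasicyclic summands) requires substantial input from the structure theory of locally finite $p$-groups, including centralizer conditions and the existence of infinite abelian subgroups in every nontrivial such group. This is essentially Shunkov's original argument, and given its depth I would treat the theorem as a black box for the remainder of the paper rather than reproduce the full proof.
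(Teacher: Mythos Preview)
The paper does not give its own proof of this statement: Theorem~\ref{thm:abartin} is quoted from the literature with a citation to Shunkov~\cite{abartin} and is used as a black box thereafter. Your final conclusion---to treat the result as a black box rather than reproduce Shunkov's argument---is therefore exactly what the paper does, and nothing more is required here.

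A brief remark on your sketch, since you offered one. The reduction to a countable subgroup $H$ is fine: your choice of $g_i\in G_{i-1}\setminus G_i$ ensures that $H\cap G_{i-1}\supsetneq H\cap G_i$ for every $i$, so $H$ is genuinely non-Artinian, not merely infinite. However, in the finitely-many-primes case you pass to an \emph{infinite} Sylow $p$-subgroup, and ``infinite'' is not the same as ``non-Artinian'' (the quasicyclic group is both infinite and Artinian). One needs to retain the non-Artinian property through this reduction, which is where the real work lies. You correctly identify this as the main obstacle and defer to Shunkov; that is the appropriate move, and it matches the paper.
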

Shunkov then used this to solve Problem \ref{prob:min} for general locally finite groups. An independent solution (without use of Shunkov's criterion \ref{thm:abartin}) was obtained by Kegel-Wehrfritz \cite{kwartin}. 
	\begin{thmnr}[Shunkov, Kegel-Wehrfritz] A locally finite group $G$ is Artinian if and only if $G$ is a finite extension of an abelian Artinian group. \label{thm:artin} \end{thmnr}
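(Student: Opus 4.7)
The plan is to prove the two directions separately. The converse (finite extension of abelian Artinian $\Rightarrow$ Artinian) is routine: if $A$ is an abelian Artinian normal subgroup of $G$ with $[G:A]$ finite, a descending chain $G_0 \supseteq G_1 \supseteq \cdots$ produces a descending chain $G_n \cap A$ in $A$ (which stabilizes by the Artinian hypothesis on $A$) together with a chain of images in the finite group $G/A$ (which stabilizes trivially); a diagonal argument then forces the original chain to stabilize.

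For the forward direction, assume $G$ is locally finite and Artinian. First I would apply Shunkov's criterion (Theorem \ref{thm:abartin}) to deduce that every abelian subgroup of $G$ is Artinian, and then Kurosh (Theorem \ref{thm:chernartin}) to conclude that each such subgroup is a finite direct sum of cyclic and quasicyclic groups. In particular, every divisible abelian subgroup of $G$ is a finite product of quasicyclic groups.

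Let $D$ be the subgroup of $G$ generated by all quasicyclic subgroups. Since $\Aut(G)$ permutes the quasicyclic subgroups, $D$ is characteristic. I would next argue that $D$ is itself abelian Artinian, hence (by Kurosh applied to $D$) a finite direct product of quasicyclic groups: the commutativity uses the minimality condition on $G$ to rule out non-commuting pairs of quasicyclics via a local-finiteness argument on their joint finitely generated subgroups, and the divisibility of $D$ is immediate from the definition.

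The main obstacle---and the heart of the argument---is to show that $[G:D] < \infty$. I would split this into two finiteness statements. First, consider the conjugation action $\rho : G \to \Aut(D)$. Since $D$ is a finite product of $\bbZ(p^\infty)$'s, $\Aut(D)$ is built from groups of $p$-adic units (and finite symmetric groups permuting isomorphic quasicyclic factors), whose only locally finite subgroups are finite; hence $\rho(G)$ is finite, so $C := C_G(D) = \ker(\rho)$ has finite index in $G$. Second, suppose for contradiction that $[C:D] = \infty$. Then $C/D$ is an infinite locally finite torsion group, and by the Hall-Kulatilaka theorem it contains an infinite abelian subgroup; lifting back to $C$ and using that $C$ centralizes $D$, one produces an abelian subgroup of $G$ whose divisible radical strictly contains $D$, contradicting the maximality of $D$ among divisible abelian subgroups of $G$. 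Combining both finiteness statements gives $[G:D] < \infty$, and $D$ is the required abelian Artinian subgroup of finite index.
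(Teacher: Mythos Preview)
The paper does not prove this theorem; it is quoted from the literature (with attributions to Shunkov and to Kegel--Wehrfritz) and used as a black box. So there is no argument in the paper to compare yours against, and I will assess your sketch on its own.

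The converse direction is fine: the standard two-chain argument (intersect with $A$, project to $G/A$) works exactly as you describe.

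For the forward direction, your overall architecture---isolate the subgroup $D$ generated by all quasicyclic subgroups, show it is characteristic, abelian, and Artinian, then prove $[G:D]<\infty$ in two steps via the centralizer---is the right target. Your finiteness arguments are essentially sound once $D$ is known to be a finite product of quasicyclics: the observation that $\Aut(D)$ is a finite product of $GL_{n_p}(\bbZ_p)$'s, each with a torsion-free open subgroup, does force $\rho(G)$ finite; and the Hall--Kulatilaka step can be completed, though you should say explicitly that the preimage $E\subseteq C$ of a quasicyclic in $C/D$ is abelian because $D\subseteq Z(E)$ and a group with locally cyclic central quotient is abelian.

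The genuine gap is the claim that $D$ is abelian. Your justification (``the minimality condition on $G$ rules out non-commuting pairs of quasicyclics via a local-finiteness argument on their joint finitely generated subgroups'') is exactly where the depth of the theorem lives, and there is no short argument of this kind. Local finiteness alone certainly does not force two quasicyclic subgroups to commute: for instance, $\PSL_2(\overline{\bbF_p})$ is locally finite and contains many pairwise non-commuting quasicyclic subgroups sitting in distinct maximal tori. So the Artinian hypothesis must be used in an essential and nontrivial way at this step, and the published proofs (Shunkov's via centralizer analysis building on his criterion; Kegel--Wehrfritz's via locally solvable techniques and min-$p$) are substantially more involved than a one-line appeal to minimality. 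Your sketch has correctly identified \emph{what} needs to be shown but not \emph{how}; as written, this step is a restatement of the difficulty rather than a resolution of it. (A minor side remark: invoking Shunkov's criterion to deduce that abelian subgroups of an Artinian group are Artinian is unnecessary---that direction is immediate.)
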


In view of Shunkov's criterion (Theorem \ref{thm:abartin}) it seems useful to have an easy method for determining whether a locally finite abelian group is Artinian. To this end, we recall the notion of rank:
	\begin{dfnnr} Let $G$ be a periodic abelian group. For any prime $p$, consider the set of elements $G(p)$ of order $p$. Then $G(p)$ is an elementary abelian $p$-group, and hence is a vector space over $\bbF_p$. The \emph{$p$-rank} of $G$ is $r_p(G):=\dim_{\bbF_p} G(p)$. The \emph{rank} of $G$ is $r(G):=\sum_p r_p(G)$. \label{dfn:rk} \end{dfnnr}
	Since finite and quasicyclic groups have bounded rank, we see (by use of Kurosh's theorem) that any periodic, Artinian, abelian group has bounded rank. In fact, the converse is also true. To see this, one first uses the primary decomposition theorem for abelian groups (see e.g. \cite[4.1.1]{robingps}) to reduce the problem to $p$-groups for some prime $p$, and for $p$-groups the statement is exactly \cite[4.3.13]{robingps}. We summarize the result as follows:
	\begin{propnr} Let $A$ be a periodic abelian group. Then $A$ is Artinian if and only if $A$ has bounded rank. \label{prop:artincrit}\end{propnr}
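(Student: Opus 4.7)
The plan is to prove the equivalence in two parts: the forward direction reduces immediately to Kurosh's classification (Theorem \ref{thm:chernartin}), while the reverse direction will use the primary decomposition of periodic abelian groups to reduce to the well-known case of abelian $p$-groups, where the statement is already on the books.

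For the forward direction, I would argue as follows. Assuming $A$ is Artinian, Theorem \ref{thm:chernartin} expresses $A$ as a finite direct sum
\[
A \cong \bigoplus_{i=1}^k C_i,
\]
where each $C_i$ is either a finite cyclic group or a quasicyclic group $\bbZ(p_i^\infty)$. Each such $C_i$ is a $p_i$-group with a unique subgroup of order $p_i$, so $r(C_i)=1$. Since $p$-rank is additive under finite direct sums, we conclude $r(A)\leq k<\infty$.

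For the converse, assume $A$ has bounded rank. I would invoke the primary decomposition for periodic abelian groups to write $A \cong \bigoplus_p A_p$. The hypothesis $r(A) = \sum_p r_p(A) < \infty$, together with the observation that $r_p(A) \geq 1$ whenever $A_p \neq 0$, forces all but finitely many $A_p$ to vanish. A finite direct sum of Artinian groups is Artinian, so it suffices to treat each nontrivial $A_p$ separately. The claim for a single prime — that an abelian $p$-group of finite $p$-rank is Artinian — is the classical Chernikov $p$-group theorem recorded as \cite[4.3.13]{robingps}, which I would cite rather than reprove.

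I do not expect any genuine obstacle here: both directions rest on standard classification results. The only conceptual step requiring attention is the observation that bounded rank forces the primary decomposition to have finite support, which is what reduces the problem to the cited $p$-group case and makes Kurosh's theorem recoverable in the reverse direction as a corollary.
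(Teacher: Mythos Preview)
Your proposal is correct and matches the paper's own argument essentially line for line: the forward direction via Kurosh's theorem (Theorem~\ref{thm:chernartin}) and the converse via primary decomposition followed by the citation of \cite[4.3.13]{robingps} for the $p$-group case. Your explicit observation that bounded rank forces the primary decomposition to have finite support is exactly the step the paper leaves implicit when it says ``reduce the problem to $p$-groups for some prime $p$.''
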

	
\section{The asymptotic deck group}
\label{sec:group}
The goal of this section is to prove the following.
	\begin{thmnr} Let $X$ be a finite CW-complex, and let $p:X\rightarrow X$ be a continuous strongly regular self-cover with degree $1<d<\infty$. Set $\Gamma:=\pi_1(X)$ and let $\varphi:\Gamma\hookrightarrow\Gamma$ be the map induced by $p$. 
	
	Then there exist $n\geq 1$ and a surjective morphism $q:\Gamma\rightarrow \bbZ^n$ such that $\varphi$ restricts to an automorphism of $\ker(q)$ and descends to a linear endomorphism of $\bbZ^n$ with determinant $d$.
	\label{thm:group2}
	\end{thmnr}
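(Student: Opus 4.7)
The plan is to study the \emph{asymptotic deck group} $F := \bigcup_n F_n \subseteq \mathrm{Homeo}(X)$, where $F_n = \Gamma/\Gamma_n$ is the deck group of $p^n$ and $\Gamma_n := \varphi^n(\Gamma)$. The inclusion $F_n \hookrightarrow F_{n+1}$ comes from the fact that $p^{n+1} = p \circ p^n$, so any deck transformation of $p^n$ is automatically one of $p^{n+1}$; thus $F$ is a locally finite group. Under the identification $F_k = \Gamma/\Gamma_k$, this inclusion corresponds (via the isomorphism $\varphi:\Gamma \xrightarrow{\sim} \Gamma_1$) to $\Gamma/\Gamma_n \xrightarrow{\sim} \Gamma_1/\Gamma_{n+1} \subseteq \Gamma/\Gamma_{n+1}$, so $F_n$ is the kernel of $F_{n+1} \twoheadrightarrow F_1$ and in particular is normal in $F_{n+1}$.

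The crucial structural statement to prove is that $F$ is abelian, equivalently $[\Gamma,\Gamma] \subseteq \Gamma_n$ for every $n$. This is where strong regularity is used in an essential way: the simultaneous normality of all $\Gamma_n \lhd \Gamma$, combined with the fact that $\varphi$ is a homomorphism (so $\varphi([\gamma,\delta]) = [\varphi(\gamma),\varphi(\delta)]$ transports commutator data between levels of the tower), should allow one to argue inductively that the conjugation action of $F_{n+1}$ on its normal subgroup $F_n$ is trivial once $F_n$ is abelian. This is the most delicate step of the proof.

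Once $F$ is known to be abelian, the rest is more routine. Each $F_n$ is an abelian quotient of $\Gamma$, hence of $\Gamma^{\mathrm{ab}} = H_1(X;\bbZ)$. Since $H_1(X;\bbZ)$ is finitely generated, its quotients have $\ell$-rank bounded by $\dim_{\bbF_\ell} H_1(X;\bbF_\ell)$ uniformly in $n$. Thus $F$ is a periodic abelian group of bounded rank, so by Proposition~\ref{prop:artincrit} it is Artinian, and Kurosh (Theorem~\ref{thm:chernartin}) yields
\[
F \;\cong\; \bbZ(p_1^\infty)^{a_1} \oplus \cdots \oplus \bbZ(p_k^\infty)^{a_k} \oplus T,
\]
with $T$ finite. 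Since $|F_n| = d^n \to \infty$, the divisible part is nontrivial, so $n := a_1 + \cdots + a_k \geq 1$.

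Finally, $q : \Gamma \to \bbZ^n$ is constructed from the quasicyclic data. For each summand $\bbZ(p_i^\infty)$, fix compatible generators $\xi_{i,m}$ with $p_i \cdot \xi_{i,m+1} = \xi_{i,m}$, lift them to $\tilde\xi_{i,m} \in \Gamma$, and use $\varphi$-equivariance of the tower to assemble $n$ independent $\bbZ$-valued characters of $\Gamma$. The resulting surjection $q$ satisfies $q \circ \varphi = \bar\varphi \circ q$ for a linear endomorphism $\bar\varphi$ of $\bbZ^n$. Since $q$ captures all of the divisible part, $\ker q$ lies inside $\varphi(\Gamma)$ and is $\varphi$-invariant; hence $[\bbZ^n : \bar\varphi(\bbZ^n)] = [\Gamma : \varphi(\Gamma)] = d$, i.e.\ $|\det\bar\varphi| = d$, and injectivity of $\varphi$ forces $\varphi|_{\ker q}$ to be an automorphism. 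The main obstacles are proving the abelianness of $F$ and lifting the quasicyclic data on $F$ to integer-valued characters of $\Gamma$.
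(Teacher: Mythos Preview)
Your overall architecture has the right shape, but the order of operations is inverted relative to the paper and this creates a genuine gap at the step you yourself flag as ``most delicate'': proving $F$ is abelian.

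The inductive mechanism you sketch does not work. Suppose $F_n$ is abelian, i.e.\ $[\Gamma,\Gamma]\subseteq\Gamma_n$. Applying $\varphi$ gives only $[\Gamma_1,\Gamma_1]\subseteq\Gamma_{n+1}$, which says that the copy of $F_n$ sitting inside $F_{n+1}$ is abelian---something you already knew. It says nothing about $[\Gamma,\Gamma_1]$ or about commutators of elements outside $\Gamma_1$, so you cannot conclude that $F_{n+1}$ centralizes $F_n$, let alone that $F_{n+1}$ is abelian. Strong regularity and the homomorphism property of $\varphi$ simply do not supply the missing commutator relations. (There is no base case either: nothing forces $F_1$ to be abelian a priori.)

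The paper resolves this by reversing your order: it first uses the \emph{topology}---the free action of each $F_n$ on the finite CW complex $X$ together with the Baumgartner--Carlsson rank bound for free $(\bbZ/\ell\bbZ)^r$-actions---to show every abelian subgroup of $F$ has bounded rank, hence $F$ is Artinian. Then Shunkov/Kegel--Wehrfritz gives that $F$ is virtually abelian. Only then does a group-theoretic argument finish: the shift map $\alpha:F\to F$ (induced by $\Gamma/\Gamma_{n+1}\to\Gamma/\Gamma_n$) is surjective and \emph{locally of finite order}, so any finite-index subgroup eventually absorbs all of $F$ under iteration of $\alpha$, forcing $F$ itself to be abelian. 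Your rank bound via $H_1(X;\bbZ)$ is indeed simpler than Baumgartner--Carlsson, but it only applies \emph{after} abelianness is known; the paper needs the harder bound precisely because abelianness is not yet available.

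A secondary issue: your construction of $q$ by ``lifting quasicyclic generators to $\bbZ$-valued characters'' is vague and would need real work. The paper instead sets $\Lambda:=\Gamma/\varphi^\infty(\Gamma)$ where $\varphi^\infty(\Gamma)=\bigcap_n\varphi^n(\Gamma)$, checks that $\overline\varphi$ has trivial stable image on $\Lambda$, and deduces that the (finite) torsion subgroup of $\Lambda$ must be trivial. This is both cleaner and makes the compatibility $q\circ\varphi=\overline\varphi\circ q$ automatic.
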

For the rest of this section we retain the notation of Theorem \ref{thm:group2}. We start by introducing a key object in the proof.
	\begin{dfnnr} Consider the inductive system
	$$1\rightarrow \Gamma\slash \varphi(\Gamma) \overset{\varphi}{\longrightarrow} \Gamma\slash \varphi^2(\Gamma) \overset{\varphi}{\longrightarrow} \Gamma\slash\varphi^3(\Gamma) \overset{\varphi}{\longrightarrow}\dots$$
and write $F_n:=\Gamma\slash\varphi^n(\Gamma)$. The limit $F:=\varinjlim F_n$ of the above system is the \emph{asymptotic deck group} of $p$.\end{dfnnr}
	The asympotic deck group of $p$ is therefore a locally finite group that is the limit of the deck groups $F_n$ of $p^n$ as $n\rightarrow \infty$. Since $\varphi$ is injective, we can write $F=\cup_n F_n$, where we identify $F_n$ with a subgroup of $F_{n+1}$ via
			\begin{align}
			F_n	&=\Gamma\slash\varphi^n(\Gamma) \nonumber\\
				&\cong \varphi(\Gamma)\slash \varphi^{n+1}(\Gamma) \nonumber\\
				&\subseteq \Gamma\slash \varphi^{n+1}(\Gamma)=F_{n+1}.
				\label{eq:identify}
			\end{align}
	Each $F_n$ acts on $X$ as the group of deck transformations of $p^n$. A key observation is that these actions are compatible with the identification in Equation \eqref{eq:identify}, so that we have:
	\begin{lemnr} There is an action of $F$ on $X$ that extends, for every $n\geq 1$, the action of $F_n$ on $X$ by deck transformations of $p^n$.
	\label{lem:extact}
	\end{lemnr}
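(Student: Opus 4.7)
The plan is to realize the asymptotic deck group $F$ concretely inside $\mathrm{Homeo}(X)$, as the ascending union of the deck groups $D_n := \mathrm{Deck}(p^n)$. Once we have the chain $D_1 \subseteq D_2 \subseteq \dots$ and the identification $\bigcup_n D_n \cong F$, the $F$-action on $X$ comes for free as the tautological action.

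The key observation is that $D_n \subseteq D_{n+1}$ as subgroups of $\mathrm{Homeo}(X)$. Indeed, if $f \in D_n$, i.e., $f : X \to X$ is a homeomorphism with $p^n \circ f = p^n$, then
$$p^{n+1} \circ f = p \circ (p^n \circ f) = p \circ p^n = p^{n+1},$$
so $f \in D_{n+1}$. Thus $D := \bigcup_{n \ge 1} D_n$ is a well-defined subgroup of $\mathrm{Homeo}(X)$ whose tautological action on $X$ restricts, on each $D_n$, to the deck action of $p^n$.

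It then remains to identify $D$ with the abstract direct limit $F = \varinjlim F_n$. Since each $p^n$ is a regular cover with $p^n_\ast(\Gamma) = \varphi^n(\Gamma)$, the standard covering-space correspondence gives, after fixing basepoints, canonical isomorphisms $F_n = \Gamma/\varphi^n(\Gamma) \xrightarrow{\sim} D_n$. The real content is to check that these isomorphisms intertwine the abstract embedding $F_n \hookrightarrow F_{n+1}$ of Equation \eqref{eq:identify} with the topological inclusion $D_n \subseteq D_{n+1}$. This follows by unwinding definitions: if $\gamma\varphi^n(\Gamma) \in F_n$ corresponds to $f \in D_n$ via path-lifting in $p^n : X \to X$, then the same $f$, now viewed inside $D_{n+1}$, corresponds under path-lifting in $p^{n+1}$ to the coset $\varphi(\gamma)\varphi^{n+1}(\Gamma)$, the extra application of $p_\ast = \varphi$ arising because the loop $\gamma$ must be lifted one additional level through $p$. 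This is exactly the embedding in \eqref{eq:identify}, so the isomorphisms $F_n \to D_n$ assemble into the desired isomorphism $F \to D$.

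The main obstacle is bookkeeping of basepoints and identifications, since source and target of $p^n$ are literally the same space $X$ and some care is needed to track which fiber is meant when comparing deck actions at different levels of the tower. Choosing, once and for all, a lift $\tilde p : \tilde X \to \tilde X$ of $p$ to the universal cover (which is automatically a homeomorphism, being a lift of a covering to a simply connected space) together with a base lift $\tilde x_0 \in \tilde X$ fixes consistent basepoints $\tilde p^n(\tilde x_0)$ at every level, and reduces the compatibility check of the previous paragraph to a routine path-lifting computation.
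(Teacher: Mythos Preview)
Your proof is correct and close in spirit to the paper's, but the organization differs in a way worth noting. The paper builds the action by first choosing compatible lifts $p_n:\widetilde X/\Gamma\to\widetilde X/\varphi^n(\Gamma)$ and $q_n:\widetilde X/\varphi^n(\Gamma)\to\widetilde X/\varphi^{n+1}(\Gamma)$, then \emph{defining} the $F_n$-action on $X$ as conjugation by $p_n$, and finally checking that the actions of $g\in F_n$ and $\varphi_n(g)\in F_{n+1}$ agree using the $\varphi_n$-equivariance of $q_n$. You instead begin with the one-line observation that $D_n\subseteq D_{n+1}$ inside $\mathrm{Homeo}(X)$, which immediately produces the group $D=\bigcup_n D_n$ and its tautological action; the substantive work is then pushed to the identification $F\cong D$, where your path-lifting computation (that $f\in D_n$ labeled by $\gamma\varphi^n(\Gamma)$ is labeled by $\varphi(\gamma)\varphi^{n+1}(\Gamma)$ when viewed in $D_{n+1}$) is exactly the content of the paper's compatibility check, just expressed without the intermediate spaces $\widetilde X/\varphi^n(\Gamma)$. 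Your route is arguably more transparent because the inclusion $D_n\subseteq D_{n+1}$ needs no choices at all, whereas the paper's lifts $p_n,q_n$ must be chosen compatibly before anything can be said; on the other hand, the paper's explicit conjugation formula $g\cdot x=p_n^{-1}gp_n(x)$ is reused verbatim in the next lemma (the $\alpha$-equivariance of $p$), so its setup pays off downstream.
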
 
	\begin{proof} Let $\widetilde{X}$ be the universal cover of $X$, so that we can write $X=\widetilde{X}\slash\Gamma$. For any $n\geq 1$, we have a natural map $X\slash \varphi^n(\Gamma)\rightarrow X\slash\Gamma$ given by taking the quotient by the action of $F_n=\Gamma\slash\varphi^n(\Gamma)$. The  map $p^n:X\rightarrow X$ can be lifted to a homeomorphism
	$$p_n:\widetilde{X}\slash\Gamma\rightarrow \widetilde{X}\slash \varphi^n(\Gamma).$$
Choose such lifts $(p_n)_{n\geq 1}$ that are compatible, so that $p_{n+1}$ is a lift of $p_n\circ p$, i.e. such that for every $n\geq 1$ the triangle
	\begin{equation}\begin{gathered} \xymatrix{
											& \widetilde{X}\slash\varphi^{n+1}(\Gamma) \ar[dr]^{\hspace{0.5 cm}\slash(F_n\slash F_{n+1})}\\
	X \ar[ur]^{p_{n+1}} \ar[r]_{\hspace{-0.5cm} p} 		& X \ar[r]_{\hspace{-0.5 cm} p_n} 							& \widetilde{X}\slash\varphi^n(\Gamma) 
	}\end{gathered}\label{eq:lift}\end{equation}
commutes. Also choose lifts
	$$q_n:\widetilde{X}\slash\varphi^n(\Gamma)\rightarrow \widetilde{X}\slash\varphi^{n+1}(\Gamma)$$
	of $p$ such that $p_{n+1}=q_n\circ p_n$. 
	
	Let $\varphi_n:F_n\rightarrow F_{n+1}$ be the map of Equation \eqref{eq:identify}. Note that $q_n$ is $\varphi_n$-equivariant (because $p$ induces $\varphi$ on fundamental groups and $q_n$ is a lift of $p$). 
	
	Now define an action of $F_n$ on $X$ by conjugating the action of $F_n$ on $\widetilde{X}\slash\varphi^n(\Gamma)$ by $p_n^{-1}$, i.e. for $g\in F_n$ and $x\in X$ we set
		$$g\cdot x = p_n^{-1} g p_n(x).$$
	It remains to show that for $g\in F_n$, the action of $g$ on $\varphi_n(g)\in F_{n+1}$ coincide. This is immediate, using that $p_{n+1}=q_n\circ p_n$ and that $q_n$ is $\varphi_n$-equivariant.\end{proof}
	The natural quotient maps
		$$\Gamma\slash \varphi^{n+1}(\Gamma)\rightarrow \Gamma\slash \varphi^n(\Gamma)$$
	induce a surjective map $\alpha: F\rightarrow F$. We will not need the following result until Section \ref{sec:proper}, but it will be convenient to prove it here:
	\begin{lemnr} $p$ is $\alpha$-equivariant. \label{lem:aeq}\end{lemnr}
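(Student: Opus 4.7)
The plan is to unwind the definition of the $F$-action on $X$ given in Lemma \ref{lem:extact} and to chase an arbitrary element $g$ through the triangle \eqref{eq:lift}. Because $F$ is the direct limit of the $F_n$, it suffices to check $\alpha$-equivariance for a single $g\in F_{n+1}$ and observe that the restriction of $\alpha$ to $F_{n+1}\subseteq F$ is exactly the natural quotient $\pi_n\colon F_{n+1}=\Gamma/\varphi^{n+1}(\Gamma)\twoheadrightarrow \Gamma/\varphi^n(\Gamma)=F_n$ (followed by the inclusion $F_n\hookrightarrow F$).

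First I would record the one substantive fact I need from covering-space theory: the projection
$$\rho_n:\widetilde{X}/\varphi^{n+1}(\Gamma)\longrightarrow \widetilde{X}/\varphi^n(\Gamma)$$
appearing in triangle \eqref{eq:lift} is equivariant for the deck group actions, in the sense that $\rho_n(g\cdot y)=\pi_n(g)\cdot \rho_n(y)$ for all $g\in F_{n+1}$ and $y\in \widetilde{X}/\varphi^{n+1}(\Gamma)$. This is immediate from the fact that the $F_k$-action on $\widetilde{X}/\varphi^k(\Gamma)$ is by left-multiplication and that $\pi_n$ is induced by the identity on $\Gamma$. Second, the commutativity of triangle \eqref{eq:lift} says precisely that $\rho_n\circ p_{n+1}=p_n\circ p$, equivalently $p\circ p_{n+1}^{-1}=p_n^{-1}\circ \rho_n$.

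With these ingredients in hand, for $g\in F_{n+1}$ and $x\in X$ I would compute
$$p(g\cdot x)\;=\;p\bigl(p_{n+1}^{-1}(g\cdot p_{n+1}(x))\bigr)\;=\;p_n^{-1}\bigl(\rho_n(g\cdot p_{n+1}(x))\bigr)\;=\;p_n^{-1}\bigl(\pi_n(g)\cdot \rho_n(p_{n+1}(x))\bigr),$$
where the first equality uses the definition of the $F_{n+1}$-action on $X$ from Lemma \ref{lem:extact}, the second uses the commutativity of \eqref{eq:lift} rewritten as above, and the third uses the equivariance of $\rho_n$. Applying $\rho_n\circ p_{n+1}=p_n\circ p$ once more to the innermost term yields
$$p(g\cdot x)\;=\;p_n^{-1}\bigl(\pi_n(g)\cdot p_n(p(x))\bigr)\;=\;\pi_n(g)\cdot p(x),$$
the last equality being again the definition of the $F_n$-action on $X$. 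Since by construction $\alpha|_{F_{n+1}}=\pi_n$ (as maps into $F$ via the inclusions), this is exactly the claim $p(g\cdot x)=\alpha(g)\cdot p(x)$.

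There is no genuine obstacle here; the entire content is bookkeeping. The only point that requires a little care is verifying that the map $\alpha$, defined through the surjections $\pi_n$, is genuinely well-defined on the direct limit, i.e. that the squares
$$\xymatrix@R=0.4cm{F_{n+1} \ar[r]^{\iota_{n+1}} \ar[d]_{\pi_n} & F_{n+2} \ar[d]^{\pi_{n+1}} \\ F_n \ar[r]_{\iota_n} & F_{n+1}}$$
commute, where $\iota_k$ is the inclusion from \eqref{eq:identify}. This is a short computation: both compositions send the class of $\gamma\in\Gamma$ to the class of $\varphi(\gamma)$ modulo $\varphi^{n+1}(\Gamma)$. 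With this sanity check, the argument above gives $\alpha$-equivariance of $p$ on each $F_{n+1}$, and hence on all of $F$.
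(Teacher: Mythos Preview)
Your proof is correct and follows essentially the same argument as the paper's own proof. The paper calls your projection $\rho_n$ by the name $\alpha_{n+1}$ and packages the key identity $p\circ p_{n+1}^{-1}=p_n^{-1}\circ\rho_n$ as the commutativity of an augmented version of diagram~\eqref{eq:lift}, but the subsequent chain of equalities is line-for-line the same; your added verification that $\alpha$ is well-defined on the direct limit is a welcome clarification that the paper leaves implicit.
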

\begin{proof} We fill in the middle map in the diagram of Equation \eqref{eq:lift} to obtain
\begin{equation}\begin{gathered} \xymatrix{
											& \widetilde{X}\slash\varphi^{n+1}(\Gamma) \ar[d] \ar[dr]^{\hspace{0.5 cm}\slash(F_n\slash F_{n+1})}\\
	X \ar[ur]^{p_{n+1}} \ar[r]_{\hspace{-0.5cm} p} 		& X \ar[r]_{\hspace{-0.5 cm} p_n} 							& \widetilde{X}\slash\varphi^n(\Gamma) 
	}\end{gathered}\label{eq:liftadd}\end{equation}
Using the left triangle, we see the vertical map is given by $p\circ p_{n+1}^{-1}$. On the other hand by using the right triangle, the map is given by $p_n^{-1}\circ \alpha_{n+1}$, where $\alpha_{n+1}$ is the natural quotient map. Thus we obtain that
	$$p\circ p_{n+1}^{-1} =p_n^{-1}\circ \alpha_{n+1}.$$
Now the proof of equivariance is straightforward from the definitions and commutativity of Diagram \ref{eq:liftadd} : For $g\in F_{n+1}$, we have
	\begin{align*}
	p(gx)	&=p(\,p_{n+1}^{-1} \,g \,p_{n+1}(x))\\
		&=p_n^{-1} \alpha_{n+1} \,g \,p_{n+1}(x)\\
		& = p_n^{-1} \alpha(g) \, \alpha_{n+1} \, p_{n+1}(x)\\
		& = p_n^{-1} \alpha(g) \, p_n \, p(x)\\
		&=\alpha(g) p(x).
	\end{align*} 
\end{proof}
	The action given by Lemma \ref{lem:extact} gives us strong structural constraints on $F$:				
	\begin{lemnr} $F$ is Artinian. \label{lem:artin}\end{lemnr}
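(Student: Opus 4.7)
The plan is to verify Shunkov's criterion (Theorem \ref{thm:abartin}): since $F$ is already locally finite (it is a direct limit of the finite groups $F_n$), it suffices to show that every abelian subgroup $A\leq F$ is Artinian, and by Proposition \ref{prop:artincrit} this in turn reduces to bounding the rank $r(A)=\sum_p r_p(A)$.

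First I would restrict which primes can contribute. Since $[\Gamma:\varphi^n(\Gamma)]=d^n$, the group $F_n$ has order $d^n$, so every element of $F=\bigcup_n F_n$ has order dividing some power of $d$. Consequently the primes $p$ with $A(p)\neq 0$ all divide $d$, and there are only finitely many of them.

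Next I would use the action of $F$ on $X$ provided by Lemma \ref{lem:extact}. The crucial observation is that \emph{every finite subgroup of $F$ acts freely on $X$}: any such subgroup is contained in some $F_n$ by local finiteness, and $F_n$ acts as the deck group of the regular cover $p^n$, so its action on $X$ is free. For each prime $p$ dividing $d$, the $p$-torsion $A(p)$ is elementary abelian, and each of its finite subgroups $G=(\bbZ/p)^s$ thus acts freely on the finite CW complex $X$. A standard Smith-theoretic argument --- applied via the Borel construction $EG\times_G X \simeq X/G$ and the resulting Leray--Serre spectral sequence, which must reconcile the polynomial growth of $H^*(BG;\bbF_p)$ with the finite-dimensionality of $H^*(X/G;\bbF_p)$ --- then yields a uniform bound $r_p(A)\leq C(X,p)$ depending only on $X$ and $p$.

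Combining the two steps gives $r(A)\leq \sum_{p\mid d}C(X,p)<\infty$, so $A$ is Artinian by Proposition \ref{prop:artincrit}, and then $F$ is Artinian by Theorem \ref{thm:abartin}. The main non-formal ingredient is the Smith-theoretic rank bound for free actions of elementary abelian $p$-groups on a finite CW complex; once that is granted, everything else is bookkeeping from the setup of the asymptotic deck group together with the observation that orders of elements in $F$ divide powers of $d$.
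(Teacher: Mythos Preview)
Your proposal is correct and follows the same architecture as the paper: reduce to abelian subgroups via Shunkov's criterion, observe that only primes dividing $d$ occur, and bound each $p$-rank by exploiting the free action of finite subgroups of $F$ on $X$. The difference lies only in how the topological rank bound is obtained. The paper invokes a specific theorem of Carlsson (for $p=2$) and Baumgartner (for $p$ odd), which applies to free \emph{cellular} actions that are \emph{trivial on $\bbF_p$-homology}; to meet these hypotheses, the paper lifts the CW structure through $p^N$ (so that the deck action is cellular) and passes to the kernel of the finite representation $G \to \Aut(H_\ast(X;\bbF_p))$, losing only a uniformly bounded factor. Your Borel-construction sketch points in the right direction, but the bare tension you describe between polynomial growth of $H^\ast(BG;\bbF_p)$ and finite-dimensionality of $H^\ast(X/G;\bbF_p)$ does not by itself yield a bound on $r$ without further multiplicative or module-theoretic input --- in effect you are also appealing to a nontrivial theorem of Carlsson--Baumgartner type, just without naming it or checking its hypotheses.
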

	\begin{proof} By Shunkov's criterion (Theorem \ref{thm:abartin}), it suffices to prove any abelian subgroup of $F$ is Artinian. Suppose $A\subseteq F$ is an abelian subgroup. Since $A$ is countable and abelian, we can choose an increasing chain
		$$A_1\subsetneq A_2 \subsetneq A_3 \subsetneq \dots$$
	such that $A_n$ is finitely generated for all $n$ and $\cup_n A_n = A$. Since $A_n$ is a finitely generated subgroup of the locally finite group $F$, it follows that $A_n$ is finite. Therefore there exist $K_n\geq 1$, and primes $p_{n,k}$ and integers $l_{n,k}\geq 1$ (with $1\leq k\leq K_n$) such that
		\begin{equation} A_n \cong \bigoplus_{1\leq k\leq K_n} \bbZ \slash p_{n,k}^{l_{n,k}} \bbZ. \label{eq:decomp}\end{equation}
In fact only finitely many primes are allowed to appear as the primes $p_{n,k}$ in the decomposition \eqref{eq:decomp} of $A_n$ : Namely, since $A_n$ is finite and $F=\cup_N F_N$, we see that there exists $N$ such that $A_n \subseteq F_N$. Note that $F_N$ is the deck group of the covering map $p^N:M\rightarrow M$, so that the order of $F_N$ is $d^N=\deg(p)^N$. Since $A_n$ is a subgroup of $F_N$, its order $|A_n|$ also divides $d^N$. Therefore $p_{n,k}$ divides $d$ for every $n$ and $k$. 

Next we will bound the number of factors $K_n$ of the decomposition of $A_n$ given by Equation \eqref{eq:decomp}. For this, we will need the following result about finite transformation groups, which has been proven for $\ell=2$ by Carlsson \cite{rk2act} and for $\ell>2$ by Baumgartner \cite{rkpact}. For simplicity, we will only state a special case of their results for homologically trivial actions.
		\begin{thmnr}[Baumgartner, Carlsson] Let $X$ be a finite CW-complex and $\ell$ prime. Let $G\cong (\bbZ\slash \ell\bbZ)^r$ be an elementary abelian $\ell$-group acting effectively and freely on $X$ by cellular maps, and suppose that $G$ acts trivially on $H_\ast(X;\bbF_\ell)$. 
		
		Then $H_j(X;\bbF_\ell)\neq 0$ for at least $r+1$ values of $j$.
		\label{thm:rankact}\end{thmnr}
	\begin{claimnr} Let $K_n$ be as in Equation \eqref{eq:decomp}. Then there is a uniform bound on $K_n$. \label{cl:rkbd} \end{claimnr}
	\begin{proof} It suffices to prove that for every prime divisor $\ell$ of $d=\deg(p)$, the $\ell$-rank of $A_n$ is uniformly bounded. (For the notion of rank, see Definition \ref{dfn:rk}.) Let $\ell$ be a prime divisor of $d$ and fix $n\geq 1$. Suppose that $r\geq 0$ is such that $G:=(\bbZ\slash \ell\bbZ)^r\subseteq A_n$. 
	Choose $N\geq 1$ such that $A_n\subseteq F_N$. Since $F_N$ acts by deck transformations on $X$, we see that $G$ acts freely on $X$. 
	
	Next we show that we can choose a CW-structure on $X$ so that $G$ acts by cellular maps. Indeed, fix any finite CW-structure on $X$ and lift it through the cover $p^N:X\rightarrow X$. This induces a cell structure on the cover so that deck transformations act by cellular maps. Whitehead proved that the topology induced by this lifted cell structure coincides with the topology of the cover \cite[(N)]{cellcover}. Note that the lifted CW-structure is necessarily different from the original one (because it has more cells), but the dimension of $X$ with respect to the two CW-structures is the same.
	
	We want to apply Theorem \ref{thm:rankact} to the action of $G$ on $X$. We already know that $G$ acts freely, and $X$ admits a cell-structure so that $G$ acts cellularly, but of course $G$ may not act trivially on homology. The action of $G$ on homology of $X$ with $\bbF_\ell$-coefficients is a representation of $G$ over $\bbF_\ell$:
		$$\rho:G\rightarrow \Aut(H_\ast(X;\bbF_\ell)).$$
Since $X$ is a finite CW-complex, $H_\ast(X;\bbF_\ell)$ is finite dimensional, say of dimension $n_\ell$. Note that the homology of $X$ is independent of the CW-structure chosen, so $n_\ell$ does not depend on $n$.  Now we can view $\rho$ as a map with values in $\text{GL}(n_\ell,\bbF_\ell)$. Write $N_\ell$ for the order of $\text{GL}(n_\ell,\bbF_\ell)$, and also write $H:=\ker(\rho)$. Let $s\geq 1$ be such that $H\cong (\bbZ\slash \ell\bbZ)^s$.

Let $D$ be the dimension of $X$ as a CW-complex with respect to the CW-structure lifted through the cover $p^N:X\rightarrow X$. As mentioned above, $D$ does not depend on $n$. Since $H$ acts freely, cellularly, and trivially on homology over $\bbF_\ell$, Theorem \ref{thm:rankact} implies that $D\geq s$. Since $H$ has index at most $N_\ell$ in $G$, we find that
	\begin{align*}
	\ell^r &= |G| = [G:H] \, |H| \\
	&\leq N_\ell \, \ell^s\\
	& \leq N_\ell \, \ell^D.
	\end{align*}
Since $N_\ell$ and $D$ are independent of $n$, we obtain a uniform bound on $r$.\end{proof}

By Claim \ref{cl:rkbd}, we can choose $C\geq 1$ such that $K_n\leq C$ for every $n$. For uniformity in the description of $A_n$, let us introduce the following notation: Let $\ell_1,\dots,\ell_r$ be the (distinct) prime factors of $d$. For $1\leq i\leq r$ and every $n$, there are at most $C$ values of $k$ such that $p_{n,k}=\ell_i$. Therefore we can choose integers
	$$r_n^{(i,1)}\geq r_n^{(i,2)}\geq \dots \geq r_n^{(i,C)}\geq 0$$
such that the $\ell_i$-torsion of $A_n$ is given by
	\begin{equation} A_n \otimes \bbZ(\ell_i^\infty) \cong \bigoplus_{1\leq k\leq C} \bbZ\slash \ell_i^{r_n^{(i,k)}} \bbZ.
	\label{eq:torsion}
	\end{equation}
Note that is entirely possible that $\ell_i$ occurs strictly less than $C$ times in the decomposition \eqref{eq:decomp} of $A_n$, in which case $r_n^{(i,k)}=0$ for some values of $k$. Combining the description given by Equation \eqref{eq:torsion} of the $\ell_i$-torsion of $A_n$ for all values of $i$, we have
	$$A_n \cong \bigoplus_{1\leq i\leq r} \bigoplus_{1\leq k\leq C} \bbZ\slash \ell_i^{r_n^{(i,k)}} \bbZ.$$
It follows that $A_n$ has rank at most $rC$. It is easy to see that then $A=\cup_n A_n$ also has rank at most $rC$. Since $A$ has bounded rank, $A$ is Artinian (see Proposition \ref{prop:artincrit}).\end{proof}
	
	Since $F$ is Artinian, it is a finite extension of an abelian group (by Shunkov's and Kegel-Wehrfritz's solution of the Chernikov problem for locally finite groups, see Theorem \ref{thm:artin}). Our next lemma shows that we can modify the choice of the finite index abelian subgroup $A$ of $F$ such that $A$ is compatible with the surjection $\alpha:F\rightarrow F$.
	\begin{lemnr} Let $F$ be a locally finite Artinian group and $\alpha:F\rightarrow F$ a surjection. Then there exists a finite index abelian subgroup $A\subseteq F$ such that $\alpha(A)\subseteq A$.
	\label{lem:abinv}
	\end{lemnr}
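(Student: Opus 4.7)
The plan is to start from any finite index abelian subgroup $B\subseteq F$ (which exists by Theorem \ref{thm:artin}) and then ``saturate'' it with respect to $\alpha$ by intersecting with all of its $\alpha$-preimages. Concretely, I would set
$$A := \bigcap_{n\geq 0} \alpha^{-n}(B)$$
and verify that $A$ is an abelian, finite index, $\alpha$-invariant subgroup of $F$.

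First I would note that, because $\alpha$ is surjective, each $\alpha^{-n}(B)$ has the same finite index in $F$ as $B$: the assignment $x\alpha^{-n}(B)\mapsto \alpha^n(x)B$ defines a bijection $F/\alpha^{-n}(B)\rightarrow F/B$, where well-definedness and injectivity use only that $\alpha$ is a homomorphism and surjectivity uses that $\alpha$ is surjective. This is the unique place where the surjectivity hypothesis enters the argument.

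The key step — and the only point at which the Artinian hypothesis is used — is to show that the infinite intersection $A$ itself has finite index. The descending chain
$$B\;\supseteq\; B\cap \alpha^{-1}(B)\;\supseteq\; B\cap\alpha^{-1}(B)\cap \alpha^{-2}(B)\;\supseteq\;\cdots$$
stabilizes by the Artinian property of $F$, so $A=\bigcap_{n=0}^{N}\alpha^{-n}(B)$ for some finite $N$. This is a finite intersection of finite index subgroups, hence itself of finite index. Since $A\subseteq B$ is a subgroup of an abelian group, $A$ is automatically abelian.

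Finally, $\alpha$-invariance is essentially tautological from the definition of $A$: if $x\in A$, then $\alpha^{m}(x)\in B$ for every $m\geq 0$, hence $\alpha^{m+1}(x)=\alpha^m(\alpha(x))\in B$ for every $m\geq 0$, and therefore $\alpha(x)\in A$. I do not anticipate any real obstacle; the lemma is essentially the standard ``intersect all preimages to achieve invariance'' trick, with the Artinian hypothesis doing exactly the work of keeping the resulting intersection of finite index.
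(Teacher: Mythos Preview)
Your proof is correct and follows essentially the same strategy as the paper: start from an arbitrary finite index abelian subgroup (supplied by Theorem~\ref{thm:artin}) and iterate an $\alpha$-related operation, using the Artinian condition to guarantee that the resulting descending chain stabilizes at a finite index subgroup. The only difference is that you intersect preimages $\alpha^{-n}(B)$, whereas the paper iterates the map $A\mapsto \alpha(A)\cap A$ using forward images; your variant has the mild advantage that the final verification of $\alpha(A)\subseteq A$ is completely tautological.
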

	\begin{proof} First choose an abelian finite index subgroup $A_0$ of $F$. For $k\geq 1$, define $A_k:= \alpha(A_{k-1})\cap A_{k-1}$. Then $(A_k)_{k\geq 0}$ is a nonincreasing sequence of subgroups of $F$. Since $F$ is Artinian, this chain eventually stabilizes, say to the subgroup $A$. Note that since $\alpha$ is a surjection and $A_0$ is of finite index in $F$, the subgroup $A_k$ must also be of finite index in $F$. For $k\gg 1$ we have $A=A_k$, so $A$ is also of finite index in $F$.
	
	Finally, note that we have
		$$A=\bigcap_{k\geq 0} A_k.$$
	It immediately follows that $\alpha(A)\subseteq A$.	\end{proof}
	
We will actually show that $F$ itself is abelian, not just virtually abelian. The key observation is that $\alpha$ is \emph{locally of finite order}, i.e. for any $g\in F$, there exists $n\geq 1$ such that $\alpha^n(g)=e$. Indeed, $\alpha:F\rightarrow F$ is the map induced by the natural quotient maps
	\begin{equation} \Gamma\slash\varphi^{n+1}(\Gamma)\rightarrow \Gamma\slash\varphi^n(\Gamma), \label{eq:natquot} \end{equation}
so if $g\in F_n$, then applying the quotient maps of Equation \eqref{eq:natquot} (at most) $n$ times, we have that the image of $g$ is trivial. After passing to the limit this yields $\alpha^n(g)=e$.
 
We can now show:	\begin{lemnr} $F$ is abelian. \label{lem:ab}\end{lemnr}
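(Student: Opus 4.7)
The plan is to show that the abelian subgroup $A\subseteq F$ produced by Lemma \ref{lem:abinv} is in fact all of $F$. The key will be to combine three facts: $A$ has finite index, $\alpha$ is surjective with $\alpha(A)\subseteq A$, and $\alpha$ is locally of finite order. I expect the main (mild) obstacle to be extracting the right invariance property for $A$ from the surjectivity of $\alpha$.

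First I would upgrade the $\alpha$-invariance of $A$ to the stronger statement $\alpha^{-1}(A)=A$. Consider the homomorphism $F\to F/A$ defined by $g\mapsto \alpha(g)A$. Its kernel is $\alpha^{-1}(A)$, which contains $A$ by Lemma \ref{lem:abinv}; its image, by surjectivity of $\alpha$, is all of $F/A$. Hence $F/\alpha^{-1}(A)\cong F/A$, and since both are finite of equal order while $A\subseteq \alpha^{-1}(A)$, the inclusion must be an equality, so $\alpha^{-1}(A)=A$.

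In particular $\alpha$ descends to a well-defined map $\bar\alpha:F/A\to F/A$ which is surjective, and therefore a permutation of the finite set $F/A$. Let $m$ be the order of $\bar\alpha$, so that $\alpha^m$ preserves every coset of $A$. By iteration, for every $g\in F$ and every $n\geq 1$ we have
\[
\alpha^{mn}(g)\in gA.
\]
Now for any $g\in F$, the fact that $\alpha$ is locally of finite order provides some $N\geq 1$ with $\alpha^N(g)=e$. Choosing $n$ large enough that $mn\geq N$ yields $e=\alpha^{mn}(g)\in gA$, that is, $g\in A$. Since $g\in F$ was arbitrary, $F=A$, and $F$ is abelian.
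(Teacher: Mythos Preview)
Your proof is correct and follows essentially the same strategy as the paper's: both use that $A$ has finite index, $\alpha(A)\subseteq A$, $\alpha$ is surjective, and $\alpha$ is locally of finite order to force $F=A$. The paper argues slightly more directly---it picks coset representatives $g_1,\dots,g_m$ for $F/A$, chooses a single $N$ with $\alpha^N(g_i)=e$ for all $i$, observes $\alpha^N(F)=\alpha^N(\bigcup_i g_iA)\subseteq A$, and concludes by surjectivity that $F=\alpha^N(F)\subseteq A$---whereas you pass through the induced permutation $\bar\alpha$ of $F/A$ and its order. Note that your first paragraph (establishing $\alpha^{-1}(A)=A$) is not actually needed: the descent of $\alpha$ to $F/A$ requires only $\alpha(A)\subseteq A$, and bijectivity of $\bar\alpha$ then follows immediately from surjectivity on a finite set.
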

\begin{proof} By Lemma \ref{lem:abinv}, there exists an abelian finite index subgroup $A\subseteq F$ with $\alpha(A)\subseteq A$. Choose finitely many coset representatives $g_1,\dots,g_m$ of $F\slash A$. Choose $N\geq 1$ such that $\alpha^N(g_i)=e$ for every $i$. Then
	$$\alpha^N(F)=\alpha^N\left(\bigcup_i g_i A\right)\subseteq A.$$
On the other hand $\alpha$ is a surjection, so we find that $F\subseteq A$. We conclude that $F=A$ is abelian. \end{proof}
	
	We will now finish the proof of Theorem \ref{thm:group2}. At this point we know (by Lemma \ref{lem:artin} and Theorem \ref{thm:artin}) that the asymptotic deck group of the strongly regular cover $p$ is abelian, and is virtually a finite product of quasicyclic groups. We need to construct a free abelian quotient of $\Gamma$ such that $\varphi$ descends to a linear endomorphism of the quotient.

Since $\Gamma\slash\varphi^n(\Gamma)$ are finite abelian groups whose orders get arbitrarily large as $n\rightarrow \infty$, we already see that $\Gamma$ has infinite abelianization and hence $\Gamma$ has a free abelian quotient. Our goal now is to construct a free abelian quotient $\Lambda$ of $\Gamma$ that is naturally associated to the map $\varphi$.

To define $\Lambda$, first note that for every $n$, we have $[\Gamma,\Gamma]\subseteq \varphi^n(\Gamma)$ (because the quotients $\Gamma\slash \varphi^n(\Gamma)$ are abelian). Define the \emph{stable image} 
	$$\varphi^\infty(\Gamma):=\bigcap_{n\geq 1} \varphi^n(\Gamma).$$
Then $[\Gamma,\Gamma]\subseteq \varphi^\infty(\Gamma)$, so $\Lambda:=\Gamma\slash \varphi^\infty(\Gamma)$ is a finitely generated abelian group. 

Since $\varphi$ restricts to an automorphism of $\varphi^\infty(\Gamma)$, we know that $\varphi$ descends to an injective morphism 
	$$\overline{\varphi}:\Lambda\rightarrow \Lambda,$$
and 	$$[\Lambda:\overline{\varphi}(\Lambda)]=[\Gamma:\varphi(\Gamma)]=d.$$
In particular $\Lambda$ is nontrivial. In addition we have:

\begin{lemnr} The stable image $\overline{\varphi}^\infty(\Lambda)=\cap_n \overline{\varphi}^n(\Lambda)$ is trivial. \label{lem:stableim}\end{lemnr}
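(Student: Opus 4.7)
The plan is to prove this by a direct definition chase. Given $x \in \bigcap_n \overline{\varphi}^n(\Lambda)$, I would choose a lift $\tilde x \in \Gamma$ and aim to show that $\tilde x \in \varphi^\infty(\Gamma)$, so that $x = 0$ in $\Lambda = \Gamma/\varphi^\infty(\Gamma)$.

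To establish this, I would use for each $n \geq 1$ the hypothesis that $x$ lies in the image of $\overline{\varphi}^n$: write $x = \overline{\varphi}^n(y_n)$ for some $y_n \in \Lambda$ and lift $y_n$ to $\tilde y_n \in \Gamma$. Since $\overline{\varphi}$ is induced on $\Lambda$ by $\varphi$ on $\Gamma$, this translates to $\tilde x \, \varphi^n(\tilde y_n)^{-1} \in \varphi^\infty(\Gamma)$. The key observation is that by its very definition $\varphi^\infty(\Gamma) \subseteq \varphi^n(\Gamma)$ for every $n$; combined with the obvious membership $\varphi^n(\tilde y_n) \in \varphi^n(\Gamma)$, this forces $\tilde x \in \varphi^n(\Gamma)$ for each $n$, and hence $\tilde x \in \bigcap_n \varphi^n(\Gamma) = \varphi^\infty(\Gamma)$.

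I do not anticipate any real obstacle here; the statement is essentially tautological once the definitions are unwound. None of the structural results on the asymptotic deck group $F$ (Artinian, abelian, etc.) are required, nor is the finite generation of $\Lambda$. The only nontrivial input is the nesting $\varphi^\infty(\Gamma) \subseteq \varphi^n(\Gamma)$ baked into the definition of the stable image.
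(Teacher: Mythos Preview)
Your proof is correct and follows essentially the same approach as the paper: lift $x$ and the preimages $y_n$ to $\Gamma$, observe that $\tilde{x}\,\varphi^n(\tilde{y}_n)^{-1}\in\varphi^\infty(\Gamma)$, and conclude $\tilde{x}\in\varphi^n(\Gamma)$ for all $n$. The only cosmetic difference is that the paper invokes the fact that $\varphi$ restricts to an automorphism of $\varphi^\infty(\Gamma)$ to write the element of $\varphi^\infty(\Gamma)$ explicitly as $\varphi^n(\delta_n)$, whereas you use the more direct inclusion $\varphi^\infty(\Gamma)\subseteq\varphi^n(\Gamma)$; your route is marginally cleaner but the arguments are the same.
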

\begin{proof} Suppose that $v\in \overline{\varphi}^\infty(\Lambda)$, and choose $v_n\in\Lambda$ with $\overline{\varphi}^n(v_n)=v$. Let $\gamma\in\Gamma$ map to $v_n\in\Lambda=\Gamma\slash\varphi^\infty(\Gamma)$. Similarly choose $\gamma_n\in \Gamma$ that map to $v_n$. 

We will show that $\gamma\in\varphi^\infty(\Gamma)$, so that $v=0$. Let $n\geq 1$ be arbitrary. Since $\gamma$ and $\varphi^n(\gamma_n)$ both map to $v\in \Lambda$, we know that $\gamma \varphi^n(\gamma_n^{-1})\in \varphi^\infty(\Gamma)$. Since $\varphi$ restricts to an automorphism on $\varphi^\infty(\Gamma)$, there exist $\delta_n\in\Gamma$ with
	$$\gamma \varphi^n(\gamma_n^{-1})=\varphi^n(\delta_n).$$
We may rewrite this as
	$$\gamma=\varphi^n(\delta_n)\varphi^n(\gamma_n)=\varphi^n(\delta_n\gamma_n),$$
so $\gamma\in \varphi^n(\Gamma)$. Since $n$ was arbitrary, we have $\gamma\in\varphi^\infty(\Gamma)$, as desired. \end{proof}

It remains to show that $\Lambda$ is torsion-free. Let $\Delta$ be the torsion subgroup of $\Lambda$. Since $\Lambda$ is finitely generated, $\Delta$ is finite. Since $\overline{\varphi}$ is injective and the image of a torsion element under a homomorphism is still torsion, we see that $\overline{\varphi}$ restricts to an automorphism of $\Delta$. Therefore $\Delta$ is contained in the stable image of $\overline{\varphi}$, which we have just shown to be trivial. Therefore $\Delta$ is trivial, so that $\Lambda$ is torsion-free. This completes the proof of Theorem \ref{thm:group2}.

\section{Smoothly proper self-covers and principal torus bundles}
\label{sec:proper}

The goal of this section is to prove Theorem \ref{thm:proper}, a topological classification theorem of smoothly proper, strongly regular self-covers of closed manifolds. Namely, in this case we show that a finite cover of the manifold is a principal torus bundle, and the self-cover lifts to
 a bundle map that is a cover on each fiber and descends to a diffeomorphism on the base. 

For the rest of this section, let $M$ be a closed manifold and let $p:M\rightarrow M$ be a nontrivial, smoothly proper self-cover with asymptotic deck group $F$. We will first investigate the compact group $\overline{F}$. Recall that there is a surjection $\alpha:F\rightarrow F$ induced by the natural quotient maps
	$$\Gamma\slash\varphi^{n+1}(\Gamma)\rightarrow \Gamma\slash\varphi^n(\Gamma),$$
and that $p$ is $\alpha$-equivariant (see Lemma \ref{lem:aeq}). We can now get some information about the compact group $\overline{F}$:

\begin{lemnr} $\overline{F}$ is a compact connected Lie group with finitely many components. Its identity component $\overline{F}^0$ is isomorphic to a torus. \label{lem:lie}\end{lemnr}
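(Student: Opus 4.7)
The plan is to verify the three assertions in turn: that $\overline{F}$ is a Lie group, that $\overline{F}^0$ is a torus, and that $\overline{F}$ has only finitely many components. Compactness of $\overline{F}$ as a subset of $\mathrm{Diff}(M)$ is built directly into the hypothesis that $p$ is smoothly proper, so that input is free.

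For the Lie group structure, I would appeal to the smooth version of the Hilbert--Smith conjecture, which is a theorem of Bochner--Montgomery (see Montgomery--Zippin, \emph{Topological Transformation Groups}): any locally compact group acting effectively on a connected smooth manifold by smooth diffeomorphisms, with the action continuous in the $C^\infty$-topology, is a Lie group. In our setting, $\overline{F}$ is a compact subgroup of $\mathrm{Diff}(M)$ with the $C^\infty$-topology, and since $F$ is a subgroup of $\mathrm{Diff}(M)$ its closure $\overline{F}$ is as well; in particular $\overline{F}$ acts effectively. This gives the Lie group conclusion.

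Next, $F$ is abelian by Lemma~\ref{lem:ab}, and since the commutator map on $\mathrm{Diff}(M)$ is continuous, the closure $\overline{F}$ is abelian as well. Therefore $\overline{F}^0$ is a connected, compact, abelian Lie group, and any such group is isomorphic to a torus by the classical structure theory of compact abelian Lie groups.

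Finally, the finiteness of $\pi_0(\overline{F})$ is automatic: in any Lie group the identity component is open (Lie groups are locally connected), so the cosets of $\overline{F}^0$ in $\overline{F}$ form an open cover of the compact space $\overline{F}$, and therefore finitely many cosets suffice. The only genuine obstacle in this argument is the appeal to the smooth Hilbert--Smith result to pass from a compact group of diffeomorphisms to a Lie group; once that is in hand, abelianness and the torus/finite-components conclusions are immediate consequences of general structural facts.
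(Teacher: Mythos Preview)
Your proof is correct and follows essentially the same route as the paper: both invoke the smooth Hilbert--Smith theorem (Montgomery / Bochner--Montgomery) to get that $\overline{F}$ is Lie, and both use that $F$ is abelian (Lemma~\ref{lem:ab}) to conclude that $\overline{F}^0$ is a compact connected abelian Lie group, hence a torus. You additionally spell out the easy argument for finiteness of $\pi_0(\overline{F})$ from compactness, which the paper leaves implicit.
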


\begin{proof} The Hilbert-Smith conjecture asserts that a compact group of homeomorphisms of a topological manifold is a Lie group. It has been proven for compact groups of diffeomorphisms by Montgomery \cite{smoothhs}. Therefore $\overline{F}$ is Lie.

It remains to show $\overline{F}^0$ is isomorphic to a torus. Note that $\overline{F}^0$ is abelian, because $F\cap \overline{F}^0$ is a dense abelian subgroup. Therefore $\overline{F}^0$ is a compact connected abelian Lie group, and hence it must be isomorphic to a torus.\end{proof}

Let $\widehat{M}$ be the cover of $M$ corresponding to the subgroup $\varphi^\infty(\Gamma)\subseteq \Gamma$. Then $\widehat{M}\rightarrow M$ is a regular cover with deck group $\Lambda:=\Gamma\slash\varphi^\infty(\Gamma)$. By Theorem \ref{thm:group}, $\Lambda$ is free abelian.

We also know the map $\varphi:\Gamma\hookrightarrow \Gamma$ descends to a linear endomorphism $\varphi$ of $\Lambda$ with determinant $d$. Since $\varphi$ restricts to an isomorphism of $\varphi^\infty(\Gamma)$, the self-cover $p:M\rightarrow M$ lifts to a diffeomorphism $\hat{p}:\widehat{M}\rightarrow \widehat{M}$ that is $\overline{\varphi}$-equivariant.

Finally, let $G$ be the group of maps $\widehat{M}\rightarrow \widehat{M}$ that are lifts of elements of $\overline{F}$. Then $G$ is an extension
	$$1\rightarrow \Lambda\rightarrow G\rightarrow \overline{F}\rightarrow 1.$$
Let $D$ be the preimage of $F$ in $G$. First, we describe how $G$ and $\hat{p}$ are related.

\begin{lemnr} Conjugation by $\hat{p}$ induces an automorphism of $G$ that restricts to $\overline{\varphi}$ on $\Lambda$. \label{lem:aut}\end{lemnr}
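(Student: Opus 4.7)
The plan is to verify both containments $\hat p \, G \, \hat p^{-1} \subseteq G$ and $\hat p^{-1} \, G \, \hat p \subseteq G$; together with the fact that conjugation by a diffeomorphism is always a bijection on $\mathrm{Diff}(\widehat{M})$, this gives that conjugation by $\hat p$ is an automorphism of $G$. The assertion about the restriction to $\Lambda$ comes essentially for free from the $\overline{\varphi}$-equivariance of $\hat p$, since the identity $\hat p(\lambda \cdot x) = \overline{\varphi}(\lambda) \cdot \hat p(x)$ is just $\hat p \, \lambda \, \hat p^{-1} = \overline{\varphi}(\lambda) \in \Lambda$ for every $\lambda \in \Lambda$.

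For $\hat p \, G \, \hat p^{-1} \subseteq G$, I would fix $g \in G$ with $\pi \circ g = f \circ \pi$ for some $f \in \overline F$ and unravel $\pi \circ \hat p \, g \, \hat p^{-1}$ using two inputs: the relation $p \circ \pi \circ \hat p^{-1} = \pi$ (obtained from $\pi \circ \hat p = p \circ \pi$ evaluated at $\hat p^{-1}(x)$) and the $\alpha$-equivariance $p \circ f = \alpha(f) \circ p$ from Lemma \ref{lem:aeq}. The latter is stated for $f \in F$ but extends by continuity to all of $\overline F$ (and the same continuity argument shows $\alpha$ is still surjective on $\overline F$, since it is already surjective on $F$ by construction). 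These combine to give $\pi \circ \hat p \, g \, \hat p^{-1} = \alpha(f) \circ \pi$, exhibiting $\hat p \, g \, \hat p^{-1}$ as a lift of $\alpha(f) \in \overline F$ and hence as an element of $G$.

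The main obstacle is the reverse inclusion $\hat p^{-1} \, G \, \hat p \subseteq G$, because $p$ is not invertible, so the previous argument does not symmetrically run backwards. My plan is to set $\psi := \pi \circ \hat p^{-1} g \hat p$, derive $p \circ \psi = f \circ p \circ \pi$ from $\pi \circ \hat p = p \circ \pi$, and then use the surjectivity of $\alpha : \overline F \to \overline F$ to pick $f_0 \in \overline F$ with $\alpha(f_0) = f$. Substituting $f \circ p = p \circ f_0$ yields $p \circ \psi = p \circ (f_0 \circ \pi)$. Since $p : M \to M$ is a regular cover with finite deck group $F_1$, the values $\psi(x)$ and $f_0(\pi(x))$ differ by an element of $F_1$ that depends continuously on $x$; the discreteness of $F_1$ together with the connectedness of $\widehat M$ forces this element to be a single constant $\gamma_0 \in F_1 \subseteq \overline F$. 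Therefore $\psi = (\gamma_0 f_0) \circ \pi$, so $\hat p^{-1} g \hat p$ is a lift of $\gamma_0 f_0 \in \overline F$, lies in $G$, and the proof is complete.
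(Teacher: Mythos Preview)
Your argument is correct, but it proceeds along a genuinely different route from the paper's. The paper never works with a general $g\in G$; instead it observes that the preimage $D\subseteq G$ of $F$ is exactly $\bigcup_{n\geq 0}\hat p^{-n}\Lambda\,\hat p^{\,n}$, which is manifestly stable under conjugation by $\hat p^{\pm 1}$, and then passes to $G=\overline{D}$ by density. Your proof bypasses $D$ entirely and handles $G$ directly, at the cost of needing the continuous extension of $\alpha$ to $\overline{F}$ \emph{before} this lemma --- in the paper that extension ($\overline{\alpha}$) is instead \emph{derived from} this lemma in Lemma~\ref{lem:aext}. Your order of reasoning is fine (the extension really does follow from the $\alpha$-equivariance of $p$ and compactness of $\overline{F}$, independently of the present statement), but you might want to spell out the uniqueness step: if $f_n\to f$ in $\overline{F}$ then any subsequential limit $h$ of $\alpha(f_n)$ satisfies $h\circ p=p\circ f$, and since $p$ is surjective this pins down $h$. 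What the paper's approach buys is the explicit description $D=\bigcup_n \hat p^{-n}\Lambda\,\hat p^{\,n}$, which is reused immediately afterwards to build the embedding $j:D\hookrightarrow V$; what your approach buys is that it avoids the (slightly delicate) appeal to density and continuity of conjugation on $\mathrm{Diff}(\widehat M)$.
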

	\begin{proof} The action of $F_n$ on $M$ is defined by
		$$g\cdot x:=p_n^{-1} g p_n(x)$$
	where $p_n$ are diffeomorphisms $M\rightarrow \widetilde{M}\slash\varphi^n(\Gamma)$ that lift $p^n:M\rightarrow M$ (see Lemma \ref{lem:extact}). Since $p_n$ lifts to $\hat{p}^n$, it is immediate that the group of lifts of $F_n$ is given by $\hat{p}^{-n} \Lambda \hat{p}^n$, and 
	$$D=\bigcup_{n\geq 0} \hat{p}^{-n} \Lambda\, \hat{p}^n.$$
Since $\hat{p}$ is $\overline{\varphi}$-equivariant, we have for any $g\in \Lambda$:
	$$\hat{p}g\hat{p}^{-1}=\overline{\varphi}(g)\hat{p}\hat{p}^{-1}=\overline{\varphi}(g).$$
So conjugation by $\hat{p}$ restricts to $\overline{\varphi}:\Lambda\hookrightarrow \Lambda$. Therefore conjugation by $\hat{p}$ restricts to an automorphism of $D$. Since $D$ is dense in $G$, it follows that conjugation by $\hat{p}$ is an automorphism of $G$.\end{proof}
	
Hence we can extend $\overline{\varphi}:\Lambda\hookrightarrow \Lambda$ to an automorphism of $G$ by setting $\overline{\varphi}(g):=\overline{p}g\overline{p}^{-1}$. Then
		$$D=\bigcup_{n\geq 0} \overline{\varphi}^{-n}(\Lambda).$$
Since $\overline{\varphi}(\Lambda)\subseteq \Lambda$, we know that $\overline{\varphi}$ descends to a map of $G\slash \Lambda\cong \overline{F}$. Write $\overline{\alpha}:\overline{F}\rightarrow \overline{F}$ for this map.
	
\begin{lemnr} $\overline{\alpha}$ extends $\alpha:F\rightarrow F$. Further $p:M\rightarrow M$ is $\overline{\alpha}$-equivariant. \label{lem:aext}\end{lemnr}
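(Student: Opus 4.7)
The plan is to prove the two assertions via a single diagram chase on $\widehat{M}$, first establishing $\overline{\alpha}$-equivariance of $p$ for arbitrary $\overline{g}\in\overline{F}$, and then deducing the compatibility $\overline{\alpha}|_F=\alpha$ by comparing with Lemma \ref{lem:aeq}. The key inputs are the lifting relation $p\circ q=q\circ\hat{p}$, where $q:\widehat{M}\to M$ is the covering projection, and the defining property of $\overline{\alpha}$: for any lift $\tilde{g}\in G$ of $\overline{g}\in\overline{F}$, the element $\hat{p}\tilde{g}\hat{p}^{-1}\in G$ is itself a lift of $\overline{\alpha}(\overline{g})$, i.e.\ $q\circ(\hat{p}\tilde{g}\hat{p}^{-1})=\overline{\alpha}(\overline{g})\circ q$.

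For the equivariance, I would pick $\overline{g}\in\overline{F}$ and a lift $\tilde{g}\in G$ and compute $p\circ\overline{g}\circ q$ in two ways. Using $\overline{g}\circ q=q\circ\tilde{g}$ followed by $p\circ q=q\circ\hat{p}$ transforms this into $q\circ\hat{p}\circ\tilde{g}$; factoring $\hat{p}\circ\tilde{g}=(\hat{p}\tilde{g}\hat{p}^{-1})\circ\hat{p}$ and reapplying the lifting identities in reverse produces $\overline{\alpha}(\overline{g})\circ p\circ q$. Since $q$ is surjective, cancellation on the right yields $p\circ\overline{g}=\overline{\alpha}(\overline{g})\circ p$, which is the desired equivariance.

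The first assertion is then immediate: for $g\in F\subseteq\overline{F}$, the equivariance just proved reads $p\circ g=\overline{\alpha}(g)\circ p$, while Lemma \ref{lem:aeq} reads $p\circ g=\alpha(g)\circ p$. Since $p$ is surjective, we may cancel to obtain $\overline{\alpha}(g)=\alpha(g)$ as self-maps of $M$, hence as elements of $\overline{F}$.

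The argument is a routine diagram chase, and the only real bookkeeping issue is keeping track of which maps live on $\widehat{M}$ versus on $M$, and inserting $p\circ q=q\circ\hat{p}$ at the right moment. In particular, no continuity or density argument is needed to extend from $F$ to $\overline{F}$: the equivariance for $\overline{F}$ is proved directly from the algebraic definition of $\overline{\alpha}$ via $G$, and the compatibility with $\alpha$ on $F$ is then a consequence of the already established Lemma \ref{lem:aeq}.
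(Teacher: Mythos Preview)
Your proof is correct. The core computation---lifting to $\widehat{M}$, observing that $\hat{p}\tilde{g}\hat{p}^{-1}$ is a lift of $\overline{\alpha}(\overline{g})$ by the very definition of $\overline{\alpha}$, and pushing back down via $p\circ q=q\circ\hat{p}$---is the same one the paper uses. The difference is purely in the order of the two assertions and what is used to prove what. The paper first proves $\overline{\alpha}|_F=\alpha$ by running the diagram chase only for $g\in F$ (using Lemma~\ref{lem:aeq} as input), and then obtains $\overline{\alpha}$-equivariance of $p$ on all of $\overline{F}$ by a density/continuity argument. You instead run the diagram chase once for arbitrary $\overline{g}\in\overline{F}$, getting the equivariance directly from the algebraic definition of $\overline{\alpha}$ on $G/\Lambda$, and then read off $\overline{\alpha}|_F=\alpha$ by comparing with Lemma~\ref{lem:aeq}. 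Your organization is arguably slightly cleaner, since it avoids invoking continuity of the $\overline{F}$-action and of $\overline{\alpha}$; as you note, everything follows from the definitions and surjectivity of $q$ and $p$.
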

\begin{proof} Let $g\in F$. By Lemma \ref{lem:aeq}, we know that $p$ is $\alpha$-equivariant, so we have
	\begin{equation} p\circ g = \alpha(g)\circ p 
	\label{eq:peq} \end{equation}
(as diffeomorphisms of $M$). Choose a lift $\hat{g}$ of $g$ to $\hat{M}$. By Equation \eqref{eq:peq}, there is a lift $\widehat{\alpha(g)}$ of $\alpha(g)$ to $\widehat{M}$ such that
	$$\hat{p}\circ \hat{g}=\widehat{\alpha(g)}\circ \hat{p}$$
so that $\widehat{\alpha(g)}=\hat{p}\hat{g}\hat{p}^{-1}$. For the maps on $M$ this means
	$$\alpha(g)=\overline{\alpha}(g)$$
as desired.

To see that $p$ is $\overline{\alpha}$-equivariant, note that we already know that $p$ is $\alpha$-equivariant, and $F$ is dense in $\overline{F}$. Since $\overline{\alpha}$ extends $\alpha$, it immediately follows that $p$ is $\overline{\alpha}$-equivariant as well.\end{proof}

\begin{lemnr} $\overline{F}$ is connected. \label{lem:fconn}\end{lemnr}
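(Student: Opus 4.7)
The plan is to show that the finite group $\pi_0(\overline{F}):=\overline{F}/\overline{F}^0$ is trivial by playing two properties of the induced endomorphism against each other. By Lemma \ref{lem:lie}, $\overline{F}$ is a compact Lie group, so $\pi_0(\overline{F})$ is finite. Since $\overline{F}^0$ is a characteristic subgroup, the continuous endomorphism $\overline{\alpha}:\overline{F}\to\overline{F}$ preserves it and descends to an endomorphism $\overline{\alpha}_*$ of $\pi_0(\overline{F})$.

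First, I would verify that $\overline{\alpha}_*$ is an automorphism. The map $\alpha:F\to F$ is surjective, since each natural quotient $\Gamma/\varphi^{n+1}(\Gamma)\to \Gamma/\varphi^n(\Gamma)$ is. Hence $\overline{\alpha}(\overline{F})$ contains the dense subgroup $F$ and, being closed by compactness of $\overline{F}$, equals $\overline{F}$. Thus $\overline{\alpha}_*$ is a surjective endomorphism of the finite group $\pi_0(\overline{F})$, hence an automorphism of some finite order $k$, so $\overline{\alpha}_*^k=\mathrm{id}$.

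Next, I would use density to transfer the ``locally of finite order'' property of $\alpha$ (observed in the paragraph preceding Lemma \ref{lem:ab}) to $\pi_0(\overline{F})$. Since $F$ is dense in $\overline{F}$ and $\pi_0(\overline{F})$ is discrete, every class in $\pi_0(\overline{F})$ has a representative $g\in F$, and for each such $g$ some power $\alpha^{n_g}(g)$ equals the identity. Choosing one representative per coset and taking $N$ to be the maximum of the $n_g$ shows that $\overline{\alpha}_*^N$ is the trivial homomorphism on $\pi_0(\overline{F})$. Combining this with $\overline{\alpha}_*^k=\mathrm{id}$ yields $\mathrm{id}=\overline{\alpha}_*^{Nk}=(\overline{\alpha}_*^N)^k$, which is the trivial homomorphism; hence $\pi_0(\overline{F})$ is trivial and $\overline{F}$ is connected.

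The only points requiring some care are the preservation of $\overline{F}^0$ by $\overline{\alpha}$ (automatic for any continuous endomorphism) and the surjectivity of $\overline{\alpha}$ on the closure (the density-plus-compactness argument above), so I do not expect any serious obstacle.
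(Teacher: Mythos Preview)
Your proof is correct and follows essentially the same approach as the paper's: both arguments show that $\overline{\alpha}$ induces a surjection (hence automorphism) on the finite group $\pi_0(\overline{F})$, then use density of $F$ to pick coset representatives in $F$ and the locally-finite-order property of $\alpha$ to force $\overline{\alpha}_*^N$ to be trivial, contradicting bijectivity unless $\pi_0(\overline{F})$ is trivial. The only cosmetic differences are that the paper invokes $G^0$ rather than the characteristic property of $\overline{F}^0$ to get $\overline{\alpha}(\overline{F}^0)\subseteq\overline{F}^0$, and it phrases the final step as $\overline{\alpha}^N(\overline{F})=\overline{F}^0$ rather than going through the order $k$ of $\overline{\alpha}_*$ (your detour through $k$ is harmless but unnecessary, since a bijective trivial map already forces the group to be trivial).
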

	
\begin{proof} Let $G^0$ be the connected component of $G$ that contains the identity. Since $\hat{p}(G^0)=G^0$ and $G^0$ surjects onto $\overline{F}^0$, we know that $\overline{\alpha}(\overline{F}^0)=\overline{F}^0$. Therefore $\overline{\alpha}$ descends to a map of the group of components $\pi_0(F)\cong \overline{F}\slash\overline{F}^0$ of $\overline{F}$.
	
$\pi_0(F)$ is finite because $\overline{F}$ is compact. On the other hand, $\overline{\alpha}$ is a surjection (since $\overline{F}$ is compact, $\alpha$ is surjective, and $F$ is dense in $\overline{F}$). Therefore the induced map on components is a surjection, and because $\pi_0(F)$ is finite, $\overline{\alpha}$ must induce an isomorphism on $\pi_0(F)$.

We will again use the principle that $\alpha$ being locally of finite order can be used to show that a finite index subgroup (in this case $\overline{F}^0$)	is the entire group (in this case $\overline{F}$): 
Take coset representatives $g_1,\dots,g_r\in \overline{F}$ for the cosets of $\overline{F}\slash\overline{F}^0$. Since $F$ is dense, $F$ surjects onto $\overline{F}\slash\overline{F}^0$. Therefore we can choose $g_i\in F$. Since $\alpha$ is locally of finite order, we can choose $N\gg 1$ such that $\alpha^N(g_i)=e$. Then we have
	$$\overline{\alpha}^N(\overline{F})=\overline{\alpha}^N\left(\bigcup_i g_i \overline{F}^0\right)=\overline{F}^0,$$
so $\overline{F}$ is connected.\end{proof}

We will suspend our investigation of $G$ for the moment, because we need some more precise algebraic information about the relationship between $\Lambda$ and $D=\cup_n \overline{\varphi}^{-n}(\Lambda)$. 

Consider the vector group $V:=\Lambda\otimes\bbR$. Since $\overline{\varphi}$ is a linear endomorphism of $\Lambda$, it naturally extends to some linear transformation
		$$\overline{\varphi}:V\rightarrow V.$$
This allows us to define a natural embedding $j:D\hookrightarrow V$ by setting 
	\begin{equation} j(\overline{\varphi}^{-n}(g)):=\overline{\varphi}^{-n}(g\otimes 1) \label{eq:jdef}\end{equation}
for $g\in \Lambda$ and $n\geq 0$. It is straightforward to check that $j$ is well-defined.

\begin{lemnr} $j(D)$ is dense in $V$. \label{lem:jdense} \end{lemnr}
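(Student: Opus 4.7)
The plan is to show that $\overline{\varphi}^{-1}$ is a linear contraction on $V$, so that the sequence of lattices $\overline{\varphi}^{-n}(\Lambda) \subseteq j(D)$ becomes arbitrarily fine as $n\to\infty$, which immediately forces their union to be dense.

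The heart of the argument is to prove that every eigenvalue of $\overline{\varphi}:V\to V$ has modulus strictly greater than $1$. I would argue this by contradiction. Let $V_{\leq 1}\subseteq V$ be the $\overline{\varphi}$-invariant subspace spanned by generalized eigenvectors with eigenvalues of modulus at most $1$. Because complex-conjugate eigenvalues share modulus, $V_{\leq 1}$ corresponds to a product of $\bbQ$-irreducible factors of the (integer) characteristic polynomial of $\overline{\varphi}$, and is therefore defined over $\bbQ$. Consequently $\Lambda_{\leq 1}:=\Lambda\cap V_{\leq 1}$ is a full-rank sublattice of $V_{\leq 1}$, and $\overline{\varphi}$ restricts to an injective $\bbZ$-linear endomorphism of $\Lambda_{\leq 1}$. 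Its determinant is a product of eigenvalues of modulus $\leq 1$, hence has absolute value $\leq 1$; on the other hand, by Gauss's lemma this determinant is a nonzero integer. Therefore it equals $\pm 1$, so $\overline{\varphi}$ restricts to an automorphism of $\Lambda_{\leq 1}$. Then $\overline{\varphi}^n(\Lambda_{\leq 1})=\Lambda_{\leq 1}$ for every $n$, so Lemma \ref{lem:stableim} gives
$$\Lambda_{\leq 1}\subseteq \bigcap_n \overline{\varphi}^n(\Lambda) = \overline{\varphi}^\infty(\Lambda) = 0,$$
forcing $V_{\leq 1}=0$.

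Once this eigenvalue bound is in hand, the spectral radius of $\overline{\varphi}^{-1}$ is strictly less than $1$, so by standard linear algebra there exist a norm $\|\cdot\|$ on $V$ and some $\rho<1$ with $\|\overline{\varphi}^{-1}(v)\|\leq \rho\|v\|$ for all $v$. Take any bounded fundamental domain $B\subseteq V$ for the translation action of $\Lambda$. Then $\overline{\varphi}^{-n}(B)$ is a fundamental domain for $\overline{\varphi}^{-n}(\Lambda)$ of diameter at most $\rho^n\,\diam(B)\to 0$. Thus each $v\in V$ can be approximated arbitrarily well by points of $\overline{\varphi}^{-n}(\Lambda)\subseteq j(D)$, establishing density.

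The main obstacle is the eigenvalue estimate in the first step, where integrality of $\overline{\varphi}$ (via Gauss's lemma applied to the characteristic polynomial) must be combined with the vanishing of the stable image from Lemma \ref{lem:stableim} to rule out any $\overline{\varphi}$-invariant rational subspace on which $\overline{\varphi}$ has determinant $\pm 1$. The remainder of the argument is routine linear algebra and point-set topology.
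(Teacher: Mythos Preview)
Your argument has a genuine gap at the very first step. The claim that $V_{\leq 1}$ is defined over $\bbQ$ is false: closure under complex conjugation only guarantees that $V_{\leq 1}$ is defined over $\bbR$. Galois conjugates over $\bbQ$ need not share the same absolute value, so $V_{\leq 1}$ need not correspond to a product of $\bbQ$-irreducible factors of the characteristic polynomial. Concretely, take $\Lambda=\bbZ^2$ and $\overline{\varphi}$ with characteristic polynomial $x^2-5x+2$. The eigenvalues $(5\pm\sqrt{17})/2$ are $\bbQ$-conjugate but have moduli approximately $4.56$ and $0.44$; here $V_{\leq 1}$ is an irrational line and $\Lambda\cap V_{\leq 1}=0$, so your determinant argument never gets off the ground. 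Moreover this $\overline{\varphi}$ really does satisfy all the hypotheses: since $x^2-5x+2$ is $\bbQ$-irreducible, the $2$-adic unit eigenspace contains no nonzero rational vector, and one checks that the stable image $\cap_n\overline{\varphi}^n(\Lambda)$ is trivial. So the conclusion you aim for---that every eigenvalue of $\overline{\varphi}$ has modulus $>1$, hence $\overline{\varphi}^{-1}$ is a contraction---is simply false in general.

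The paper's proof avoids any spectral-radius estimate. It writes the closure as $\overline{j(D)}=\Delta\oplus W$ with $\Delta$ discrete and $W=\overline{j(D)}^0$, and observes that $\overline{\varphi}$ preserves $W$ and induces an automorphism $\overline{\varphi}_\Delta$ of the discrete part (so $\det\overline{\varphi}_\Delta=\pm 1$). The key step (Claim~\ref{claim:eval}) is that no eigenvalue of $\overline{\varphi}_\Delta$ is an eigenvalue of $\overline{\varphi}_W$: this is proved by a Galois/norm argument similar in spirit to yours, but applied to the eigenvalues of $\overline{\varphi}_\Delta$ (which genuinely have norm $\pm 1$) rather than to eigenvalues of small modulus. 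Once the spectra are disjoint, the sum $U$ of all generalized eigenspaces for the $\overline{\varphi}_\Delta$-eigenvalues is a $\bbQ$-rational, $\overline{\varphi}$-invariant subspace projecting isomorphically onto $V/W$, hence $\det(\overline{\varphi}|_U)=\det\overline{\varphi}_\Delta=\pm 1$; then $\Lambda\cap U$ lies in the stable image and Lemma~\ref{lem:stableim} forces $\Delta=0$. Your instinct to combine integrality with Lemma~\ref{lem:stableim} is right, but it must be applied to a subspace that is \emph{actually} defined over $\bbQ$, and the discrete part of $\overline{j(D)}$ is what supplies one.
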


\begin{proof} The closure $\overline{j(D)}$ is a closed connected subgroup of $V$. Closed subgroups of connected abelian Lie groups have been classified by W\"ustner \cite{abelianlie}. In this case, because $V$ has no compact subgroups, this classification yields a discrete subgroup $\Delta\subseteq V$ and a subspace $W\subseteq V$ such that $\overline{j(D)}=\Delta\oplus W$. Therefore it suffices to show that $\Delta=0$. 

Note that while $W=\overline{j(D)}^0$ is uniquely determined by $j(D)$, the discrete group $\Delta$ is not. Indeed, if $W$ is nontrivial then there are many sections of 
	$$\overline{j(D)}\rightarrow \overline{j(D)}\slash W \cong \Delta$$
and the image of any such section would be an admissible choice of $\Delta$. We will use this freedom to make a particular choice of $\Delta$ that is compatible with the lattice $\Lambda\subseteq \overline{j(D)}$.

Write $\Lambda_\Delta:=\Lambda\cap \Delta$ and $\Lambda_W:=\Lambda\cap W$. Since $\Lambda\subseteq \overline{j(D)}$ is a lattice, we have
	\begin{enumerate}
		\item $\Lambda_W$ is a lattice in $W=\overline{j(D)}^0$, and
		\item The composition
	$$\Lambda\hookrightarrow \overline{j(D)}=\Delta\oplus W\rightarrow \Delta$$
has image that is of finite index in $\Delta$, and is a cocompact lattice in $V\slash W$.
	\end{enumerate}
Now consider the short exact sequence
	$$1\rightarrow \Lambda_W \rightarrow \Lambda \rightarrow \Lambda\slash \Lambda_W\rightarrow 1.$$
Since all of these groups are free abelian, this sequence splits. Choose any section $\sigma:\Lambda\slash \Lambda_W\rightarrow \Lambda$. Since $\Lambda\slash \Lambda_W$ is a cocompact lattice in $V\slash W\cong \Delta\otimes \bbR$, we can uniquely extend $\sigma$ to a section $V\slash W\rightarrow V$. Set $\Delta:=\sigma\left(\overline{j(D)}\slash W\right)$. 

Now we have $\Lambda=\Lambda_\Delta\oplus \Lambda_W$, and we have already seen that
	\begin{equation} V=\Lambda\otimes \bbR = (\Delta\otimes \bbR)\oplus W. \label{eq:vdecomp} \end{equation}
 Consider the linear transformation $\overline{\varphi}$. Since $\overline{\varphi}$ restricts to an automorphism of $j(D)$, we know that $\overline{\varphi}$ must preserve $W$, and hence is triangular with respect to the decomposition of Equation \eqref{eq:vdecomp}, say
	\begin{equation} \overline{\varphi}=\begin{pmatrix} \overline{\varphi}_\Delta & 0 \\ C & \overline{\varphi}_W\end{pmatrix} \label{eq:triang}\end{equation}
where $\overline{\varphi}_\Delta$ (resp. $\overline{\varphi}_W$) is an automorphism of $\Delta\otimes\bbR$ (resp. $W$), and 
	$$C:\Delta\otimes\bbR\rightarrow W$$ 
is a linear map with $C(\Lambda_\Delta)\subseteq \Lambda_W$.

Set $N:=[\Delta:\Lambda_\Delta]$. Since $\Delta$ is finitely generated, it has only finitely many subgroups of index $N$. Let $\Delta_N$ be the intersection of all subgroups of $\Delta$ of index $N$. Then $\Delta_N\subseteq \Lambda_\Delta$ is a finite index subgroup and and $\overline{\varphi}_\Delta$ restricts to an automorphism on $\Delta_N$.

Our goal is to show that if $\Delta$ is nontrivial, we can modify $\Delta$ even further so that it is also compatible with $\overline{\varphi}$. Namely, we want to show that we can choose $\Delta$ such that $\overline{\varphi}(\Delta)\subseteq \Delta$. For this, we need to block diagonalize the block triangular matrix of Equation \eqref{eq:triang}. For this, we need:

\begin{claimnr} Let $\lambda$ be an eigenvalue of $\overline{\varphi}_\Delta$. Then $\lambda$ is not an eigenvalue of $\overline{\varphi}_W$. \label{claim:eval}\end{claimnr}
\begin{proof} $\Lambda_N$ is a cocompact lattice in $V$, so that we can choose an isomorphism $V\cong \bbR^{\dim(V)}$ such that $\Lambda_N$ corresponds to $\bbZ^{\dim(V)}$. Since $\overline{\varphi}(\Lambda_N)\subseteq \Lambda_N$, the matrix corresponding to $\overline{\varphi}$ has integer entries. For $\beta\in \bbC$ an algebraic integer, let $K_\beta$ be the splitting field of the minimal polynomial of $\beta$.

%Let $\mu_\Delta$ (resp. $\mu_W$) be the minimal polynomial of $\overline{\varphi}_\Delta$ (resp. $\overline{\varphi}_W$). 

Now let $\lambda$ be an eigenvalue of $\overline{\varphi}_\Delta$. We argue by contradiction, so suppose that $\lambda$ is also an eigenvalue of $\overline{\varphi}_W$. First consider $\overline{\varphi}_\Delta$: Since $\overline{\varphi}_\Delta$ is an integer matrix, $\lambda$ is an algebraic integer and the Galois conjugate of $\lambda$ by any $\sigma\in \text{Gal}(K_\lambda\slash\bbQ)$ is also an eigenvalue of $\overline{\varphi}_\Delta$. Therefore the algebraic norm 
	$$N_{K_\lambda\slash\bbQ}(\lambda)=\prod_{\sigma\in\text{Gal}(K_\lambda\slash\bbQ)} \sigma(\lambda)$$
divides $\det(\overline{\varphi}_\Delta)=\pm 1$. On the other hand $N_{K_\lambda\slash\bbQ}(\lambda)$ is an integer (because $\lambda$ is an algebraic integer), and hence we must have $N_{K_\lambda\slash\bbQ}(\lambda)=\pm 1$. 

Now consider $\overline{\varphi}_W$: First extend $\overline{\varphi}_W$ to a map of the complexification $W_\bbC:=W\otimes_{\bbR} \bbC$. Define the subspace
	$$U:=\bigoplus_{\sigma \in \text{Gal}(K_\lambda \slash \bbQ)} \ker(\overline{\varphi}_W-\sigma(\lambda)\text{Id}).$$
of $W_\bbC$. Then $U$ is an $\overline{\varphi}_W$-invariant subspace of $W_\bbC$. Write $\overline{\varphi}_U$ for the restriction of $\overline{\varphi}_W$ to $U$. Then $\overline{\varphi}_U$ has eigenvalues $\sigma(\lambda)$, where $\sigma\in \text{Gal}(K_\lambda\slash\bbQ)$, and each eigenvalue occurs with the same algebraic multiplicity $m\geq 1$. Hence 
	\begin{align*}\det(\overline{\varphi}_U)	&=\prod_{\sigma\in\text{Gal}(K_\lambda\slash \bbQ)} \sigma(\lambda)^m 	\\
									&= \left(N_{K_\lambda\slash \bbQ}(\lambda)\right)^m					\\
									&=\pm 1,
	\end{align*}
Further, because $\ker(\overline{\varphi}_W-\lambda I)$ is defined over $\bbQ(\lambda)$ and $U$ is spanned by all Galois conjugates of this eigenspace, we know that $U$ is defined over $\bbQ$.

Set $\Lambda_U:=\Lambda_N\cap U$. Then $\Lambda_U$ is a cocompact lattice in $U(\bbR)$ because $U$ is defined over $\bbQ$ (hence over $\bbR$). Since $\overline{\varphi}$ restricts to a linear automorphism of $U$ with determinant $\pm 1$, and $\overline{\varphi}(\Lambda_U)\subseteq \Lambda_U$, we must have that $\overline{\varphi}_U$ restricts to an automorphism of $\Lambda_U$. Therefore $\Lambda_U$ is contained in the stable image
	$$\overline{\varphi}^\infty(\Lambda):=\cap_n \overline{\varphi}^n(\Lambda)$$
which is trivial by Lemma \ref{lem:stableim}. On the other hand, $\Lambda_U\subseteq U(\bbR)$ is a cocompact lattice, so $U(\bbR)$ must be trivial. This is a contradiction, and completes the proof of Claim \ref{claim:eval}. \end{proof}

Let $\lambda_1,\dots,\lambda_r$ be the eigenvalues of $\overline{\varphi}_\Delta$ (counted without multiplicity), and let
	$$E_i:=\ker(\overline{\varphi}-\lambda_i \text{Id})$$
be the corresponding generalized eigenspace in the complexification $V_\bbC$. As discussed above, $U:=\oplus_i E_i$ is defined over $\bbQ$, and hence $\Lambda_N\cap U(\bbR)$ intersects $U(\bbR)$ in a cocompact lattice. In addition, $U(\bbR)$ is clearly $\overline{\varphi}$-invarinat, so
	$$\overline{\varphi}(A_N\cap U(\bbR))\subseteq A_N\cap U(\bbR).$$
We want to show that we must have equality. To do so, it suffices to show that $\det(\overline{\varphi}|)_{U(\bbR)})=\pm 1$: Since none of the eigenvalues $\lambda_i$ are eigenvalues of $\overline{\varphi}_W$ (by Claim \ref{claim:eval}), we know that $U(\bbR)$ intersects $W$ trivially and hence $U(\bbR)$ projects isomorphically onto $V\slash W$. In particular 
	$$\det\overline{\varphi}|_{U(\bbR)}=\det\overline{\varphi}_\Delta=\pm 1.$$ 
We conclude that $\overline{\varphi}(A_N\cap U(\bbR))=A_N\cap U(\bbR)$. But then $\Lambda_N\cap U(\bbR)$ is contained in the stable image of $\overline{\varphi}$, which is a contradiction. Therefore we must have that $\Delta=0$ is trivial, as desired. This completes the proof of Lemma \ref{lem:jdense}.\end{proof}

We can now continue our investigation of $G$.
\begin{lemnr} $G$ has finitely many components. \label{lem:vconn} \end{lemnr}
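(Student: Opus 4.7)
The plan is to show $\pi_0(G) = G/G^0$ is finite by pushing the problem into $V = \Lambda \otimes \bbR$ and invoking the density of $j(D)$ from Lemma \ref{lem:jdense}. First I would note that $G$ is a Lie group, being an extension of the Lie group $\overline{F}$ by the discrete normal subgroup $\Lambda$, and that $G = \Lambda \cdot G^0$ because $\overline{F}$ is connected by Lemma \ref{lem:fconn}. Setting $\Lambda_0 := \Lambda \cap G^0$, this identifies $\pi_0(G) = \Lambda/\Lambda_0$, a finitely generated abelian group. It is finite exactly when $\Lambda_0$ spans $V$, so I would set $W := \Lambda_0 \otimes \bbR$ and aim to show $W = V$.

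The key step is to establish $\overline{\varphi}^{-n}(\Lambda) \subseteq \Lambda + W$ for every $n \geq 0$. To set this up I would use that $G^0$ is characteristic in $G$ and $\overline{\varphi}$ is a continuous automorphism of $G$ (Lemma \ref{lem:aut}); hence $\overline{\varphi}$ preserves $G^0$, $\Lambda_0$, and $W$, and descends to an automorphism of $\Lambda/\Lambda_0$. Concretely, this automorphism property gives $\overline{\varphi}(\Lambda) + \Lambda_0 = \Lambda$. The base case of the induction is trivial; for the step it suffices to check $\overline{\varphi}^{-1}(\Lambda) \subseteq \Lambda + W$. Given $v \in V$ with $\overline{\varphi}(v) = \lambda \in \Lambda$, I would write $\lambda = \overline{\varphi}(\lambda_1) + \lambda_0$ with $\lambda_1 \in \Lambda$ and $\lambda_0 \in \Lambda_0 \subseteq W$, so that $\overline{\varphi}(v - \lambda_1) = \lambda_0 \in W$; since $\overline{\varphi}^{-1}(W) = W$, this forces $v - \lambda_1 \in W$, hence $v \in \Lambda + W$.

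Once the induction is in place, $j(D) = \bigcup_n \overline{\varphi}^{-n}(\Lambda) \subseteq \Lambda + W$. Now $\Lambda + W$ is closed in $V$ since it is the preimage of the discrete subgroup $\Lambda/(\Lambda \cap W) \subseteq V/W$, while $j(D)$ is dense in $V$ by Lemma \ref{lem:jdense}. Therefore $\Lambda + W = V$, so the discrete subgroup $\Lambda/(\Lambda \cap W)$ fills $V/W$, which forces $V/W = 0$. Hence $W = V$, $\Lambda_0$ has full rank in $\Lambda$, and $\pi_0(G)$ is finite.

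The main obstacle I expect is the bookkeeping around surjectivity of $\overline{\varphi}$ modulo $\Lambda_0$: on $\Lambda$ itself $\overline{\varphi}$ has image of index $d$, so surjectivity on the quotient $\Lambda/\Lambda_0$ is not automatic and must be deduced from the fact that $\overline{\varphi}$ is a genuine Lie group automorphism of $G$ and that $G^0$ is a characteristic subgroup. Once that point is correctly isolated, the rest of the argument is a clean density/discreteness contradiction in the vector space $V$.
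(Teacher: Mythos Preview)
Your proof is correct and takes a genuinely different route from the paper's. Both arguments hinge on Lemma~\ref{lem:jdense}, but they exploit the density of $j(D)$ in dual ways. The paper argues by contradiction on the functional side: assuming a nontrivial homomorphism $\lambda:G/G^0\to\bbZ$, it restricts $\lambda$ to $\Lambda$, extends by linearity to a functional $f:V\to\bbR$, checks that $\lambda=f\circ j$ on all of $D$ (using that every element of $D$ has a multiple in $\Lambda$), and concludes that $f$ is integer-valued on the dense set $j(D)$, hence identically zero. Your argument works on the primal side: you identify $\pi_0(G)\cong\Lambda/\Lambda_0$ via $G=\Lambda\cdot G^0$, set $W=\Lambda_0\otimes\bbR$, and prove the inclusion $j(D)\subseteq \Lambda+W$ by an induction resting on the surjectivity of $\overline{\varphi}$ on $\Lambda/\Lambda_0$; density then forces $\Lambda+W=V$ and hence $W=V$. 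The extra ingredient your approach requires---that $\overline{\varphi}$ induces an \emph{automorphism} (not merely an endomorphism) of $\Lambda/\Lambda_0$---is exactly what you isolate at the end, and it is cleanly supplied by Lemma~\ref{lem:aut} together with the fact that $G^0$ is characteristic. The paper's functional argument sidesteps this point but instead has to verify compatibility of $\lambda$ and $f\circ j$ on $D\setminus\Lambda$. Both proofs are of comparable length; yours is arguably more direct once the surjectivity on $\pi_0(G)$ is in hand.
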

	
\begin{proof} The group of components of $G$ is $\pi_0 G \cong G\slash G^0$. Since $\Lambda$ is abelian, $D=\cup_n \overline{\varphi}^{-n}(\Lambda)$ is the increasing union of abelian groups and hence abelian. Since $D$ is dense in $G$, we find that $G$ is abelian as well. Therefore $\pi_0(G)$ is an abelian group. In fact it must also be finitely generated: 

Since $\Lambda$ is a cocompact lattice in $G$, the image of $\Lambda$ under
		$$\Lambda\hookrightarrow G\rightarrow G\slash G^0$$
is of finite index in $G\slash G^0$. Therefore $G\slash G^0$ contains a finite index subgroup that is finitely generated and abelian. Since $G\slash G^0$ is also abelian itself (because $G$ is abelian), we see that $G\slash G^0$ is a finitely generated abelian group. We need to show that $G\slash G^0$ is finite, or equivalently, that there is no nontrivial homomorphism $G\slash G^0\rightarrow \bbZ$.
	
Suppose that there is a homomorphism $G\slash G^0\rightarrow \bbZ$. By precomposing with the projection $G\rightarrow G\slash G^0$, we obtain a homomorphism
		$$\lambda:G\rightarrow \bbZ.$$
Since $\Lambda$ is free abelian, the restriction of $\lambda$ to $\Lambda$ naturally extends to the vector group $V:=\Lambda\otimes\bbR$ and gives a linear functional
		$$f:V\rightarrow \bbR$$
that is integer-valued on $\Lambda$. Further, since $\overline{\varphi}$ is a linear endomorphism of $\Lambda$, it also naturally extends to some linear transformation
		$$\overline{\varphi}:V\rightarrow V.$$
Let $j:D\hookrightarrow V$ be the embedding of Equation \eqref{eq:jdef}. Note that we now have two functionals on $D$, namely $\lambda$ and $f\circ j$. These two functionals coincide on $\Lambda$. By linearity, and since any element of $D$ has a multiple that belongs to $\Lambda$, we must have
	$$\lambda=f\circ j$$
as maps $D\rightarrow \bbR$. In particular $f$ is integer-valued on $j(D)$. On the other hand, by Lemma \ref{lem:jdense}, $j(D)$ is dense in $V$. Therefore $f$ is integer-valued on all of $V$. But a linear, integer-valued functional on a real vector space is trivial. Hence the original homomorphism $G\slash G^0\rightarrow \bbZ$ is trivial, as desired.\end{proof}

Before proceeding we need the following technical statement:
	\begin{lemnr} Let $H$ be a connected abelian Lie group and let $\Delta\subseteq H$ be a torsion-free cocompact subgroup. Suppose $\psi:H\rightarrow H$ is an automorphism with $\psi(\Delta)\subseteq \Delta$ and such that $D:=\displaystyle\bigcup_{n\geq 0} \psi^{-n}(\Delta)$ is dense in $H$. Then $H$ is a vector group. \label{lem:tech} \end{lemnr}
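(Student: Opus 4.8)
The plan is to prove that the maximal compact subgroup of $H$ is trivial; since a connected abelian Lie group is a vector group iff it is torsion-free iff it has no nontrivial compact subgroup, this suffices. Write $H\cong\bbR^a\times T^b$ with maximal compact subgroup $K\cong T^b$; the goal is $b=0$. As $K$ is characteristic we have $\psi(K)=K$. Since $\Delta$ is closed and torsion-free, $\Delta\cap K$ is a torsion-free closed subgroup of a torus, hence trivial; applying the automorphisms $\psi^{-n}$ (which fix $K$) gives $\psi^{-n}(\Delta)\cap K=\{e\}$ for all $n$, and therefore $D\cap K=\{e\}$.

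Next I would pass to the universal cover $\pi\colon\widetilde H\cong\bbR^{a+b}\to H$, so that $L:=\ker\pi=\pi_1(H)$ is a lattice in the subspace $W:=\pi^{-1}(K)$. Lift $\psi$ to a linear automorphism $\widetilde\psi$ with $\widetilde\psi(L)=L$ (so $\widetilde\psi(W)=W$), and put $\widetilde\Delta:=\pi^{-1}(\Delta)$ (closed and cocompact, since $\Delta$ is) and $\widetilde D:=\pi^{-1}(D)=\bigcup_n\widetilde\psi^{-n}(\widetilde\Delta)$. Then $\widetilde D$ is dense in $\widetilde H$ and $\widetilde\Delta\cap W=\widetilde D\cap W=L$. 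The identity component $\widetilde\Delta_0$ is a $\widetilde\psi$-invariant vector subspace with $\widetilde\Delta_0\cap W=0$ (a subspace inside the discrete set $L$). Dividing by $\widetilde\Delta_0$ reduces the problem to: a vector group $V:=\widetilde H/\widetilde\Delta_0$, a cocompact lattice $\Lambda:=\widetilde\Delta/\widetilde\Delta_0$, an automorphism $\psi$ of $V$ with $\psi(\Lambda)\subseteq\Lambda$, a dense subgroup $D=\bigcup_n\psi^{-n}(\Lambda)$, and a $b$-dimensional $\psi$-invariant subspace $W'$ (the image of $W$) with $L':=W'\cap\Lambda$ a full lattice in $W'$ and $D\cap W'=L'$. (If $\Delta$ is discrete, as it is in the intended application, this step is vacuous: $\widetilde\Delta_0=0$.) From $\psi^{-n}(\Lambda)\cap W'=\psi^{-n}(L')\subseteq D\cap W'=L'$ for all $n$ together with $\psi(L')\subseteq L'$ one concludes $\psi(L')=L'$, so $\psi|_{W'}$ is an automorphism of the lattice $L'$, i.e.\ lies in $\mathrm{GL}_b(\bbZ)$ in a suitable basis.

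The heart of the matter is the equivalence: \emph{$D=\bigcup_n\psi^{-n}(\Lambda)$ is dense in $V$ if and only if $\psi$ has no complex eigenvalue that is an algebraic unit.} For this, $D$ is dense iff no nonzero linear functional $f$ on $V$ satisfies $f(D)\subseteq\bbZ$; and $f(D)\subseteq\bbZ$ iff $f\circ\psi^{-n}$ is $\bbZ$-valued on $\Lambda$ for every $n$, i.e.\ iff $f$ lies in the stable image $\bigcap_n(\psi^{*})^{n}(\Lambda^{*})$ of the dual map on the dual lattice $\Lambda^{*}$. A nonzero such stable image is a $\psi^{*}$-invariant finitely generated free abelian group on which $\psi^{*}$ restricts to an automorphism, so $\psi^{*}$ — and hence $\psi$, with the same characteristic polynomial — has a unit eigenvalue; conversely, given a unit eigenvalue $\mu$ of $\psi$, the sum $U$ of the generalized eigenspaces for the Galois conjugates of $\mu$ is (exactly as in the proof of Claim \ref{claim:eval}) a nonzero $\psi$-invariant $\bbQ$-subspace with $\det(\psi|_{U})=\pm1$, whence $\Lambda\cap U$ is a full lattice on which $\psi$ acts by an automorphism, giving $\Lambda\cap U\subseteq\bigcap_n\psi^{n}(\Lambda)$, and the same argument for $\psi^{*}$ shows $\bigcap_n(\psi^{*})^{n}(\Lambda^{*})\neq0$, i.e.\ $D$ is not dense. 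Now if $b\geq1$, then $W'\neq0$ and $\psi|_{W'}\in\mathrm{GL}_b(\bbZ)$ has a complex eigenvalue, necessarily an algebraic unit and (since $W'$ is $\psi$-invariant) an eigenvalue of $\psi$ — contradicting density of $D$. Hence $b=0$ and $H\cong\bbR^a$ is a vector group.

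I expect the main obstacle to be the reduction in the second paragraph, in particular verifying $D\cap W'=L'$ and hence $\psi(L')=L'$; once the situation is in lattice form the eigenvalue argument is essentially a rerun of Claim \ref{claim:eval}. One must also be slightly careful about the meaning of ``cocompact'': the statement fails for non-closed $\Delta$ (take $\psi=\mathrm{id}$ and $\Delta$ a dense copy of $\bbZ$ in $T^1$), so the argument uses that $\Delta$ is closed.
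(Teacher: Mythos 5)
Your proof is correct, but it takes a genuinely different route from the paper's. The paper argues by induction on $\dim H$: it extracts a nontrivial closed $\psi$-invariant vector subgroup $W\subseteq H$ from the accumulation of $D$ near the identity (namely $\exp$ of the stabilized span of $\log(\psi^{-n}(\Delta)\cap U_k)$ for small neighborhoods $U_k$ of $e$), and then passes to $H\slash W$; no arithmetic of the lattice enters. You instead pass to the universal cover, reduce to a cocompact lattice $\Lambda$ in a vector group $V$ with $\psi(\Lambda)\subseteq\Lambda$, and prove the characterization that $\bigcup_n\psi^{-n}(\Lambda)$ is dense if and only if $\psi$ has no eigenvalue that is an algebraic unit; a nontrivial maximal torus $K\cong T^b$ of $H$ would force $\psi$ to act on the $\psi$-invariant subspace covering $K$ through $\mathrm{GL}_b(\bbZ)$, whose eigenvalues are units. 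This is essentially the Galois-norm argument of Claim \ref{claim:eval} (equivalently, of Lemma \ref{lem:jdense}) run in reverse, so your approach unifies Lemma \ref{lem:tech} with Lemma \ref{lem:jdense}, which the paper proves independently; the price is that your proof leans on the integrality/eigenvalue machinery, while the paper's is soft. Your reduction steps all check out: $\Delta\cap K=\{e\}$ (from closedness plus torsion-freeness) gives both $\widetilde\Delta_0\cap W=0$, so that $W$ survives into $V$, and $D\cap W'=L'$, which upgrades $\psi(L')\subseteq L'$ to $\psi(L')=L'$; and only the direction ``unit eigenvalue $\Rightarrow$ not dense'' of your equivalence is actually needed, which you correctly route through the dual lattice $\Lambda^{*}$ rather than through $D\cap W'$ being discrete. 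Your closing remark that ``cocompact'' must be read as closed (the paper's own proof implicitly treats $\Delta$ as discrete) is a fair and worthwhile observation.
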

	\begin{proof} We proceed by induction on $\dim H$. The base case $\dim(H)=0$ is trivial.
	
	Suppose now that the statement is true for groups of dimension $<N$ for some $N\geq 1$. For $n\geq 0$, set $\Delta_n:=\psi^{-n}(\Delta)$. Choose a sequence of open neighborhoods $U_k$ of $e\in H$ such that the exponential map of $H$ is a diffeomorphism onto $U_k$ and $U_{k+1}\subseteq U_k$ and $\psi(U_{k+1})\subseteq U_k$. For fixed $k$, consider the sequence
		$$V^{(k)}_n:=\text{span}\left(\log(\Delta_n\cap U_k)\right)$$
	of subspaces of the Lie algebra $\frh$ of $H$. Since $\Delta_n\subseteq \Delta_{n+1}$, we have $V^{(k)}_n\subseteq V^{(k)}_{n+1}$. Since $\frh$ is finite dimensional, this sequence stabilizes to some subspace $V^{(k)}\subseteq \frh$. Since $U_{k+1}\subseteq U_k$, we have $V^{(k+1)}\subseteq V^{(k)}$. Therefore the sequence $V^{(k)}$ stabilizes to some subspace $V$. Further since $\psi(U_{k+1})\subseteq U_k$, we have $\psi_\ast(V)\subseteq V$. Since $\psi$ is injective and $V$ is finite dimensional, $\psi$ restricts to an automorphism of $W:=\exp(V)$. 
	
	We claim that $W$ is a nontrivial closed vector subgroup of $H$. Indeed, because $D=\cup_n \Delta_n$ is dense, we know that for $n$ and $k$ sufficiently large, $\Delta_n\cap U_k\neq \emptyset$. Therefore $V\neq 0$. This shows that $W$ is nontrivial. Further, again for $n$ and $k$ sufficiently large, $\text{span}\log(\Delta_n\cap U_k)$ is a cocompact lattice in $V$. Since $H$ is abelian, 
		$$\exp:\frh \to H$$
is a morphism, so $\exp \text{span}\log(\Delta_n\cap U_k)$ is a subgroup of $\Delta_n$, which is discrete. This shows that
		$$\exp: V\rightarrow H$$
	is a proper map, and hence $W=\exp(V)$ is closed and in addition $W$ is a vector subgroup.
	
	We consider two cases: Either $\Delta$ is contained in $W$ or not. First suppose $\Delta$ is contained in $W$. Since $\psi$ restricts to an automorphism of $W$, we have $D\subseteq W$. Since $D$ is dense and $W$ is closed, we find that $W=H$, and hence $H$ is a vector group.
	
	Finally suppose $\Delta$ is not contained in $W$. Consider the projection map $q_W: H\rightarrow H\slash W$. Since $W$ is a nontrivial closed vector subgroup, we see that $H\slash W$ is a connected abelian Lie group with $\dim(H\slash W)<\dim H$. We want to apply the inductive hypothesis, so we verify
		\begin{itemize}
			\item Since $\Delta\cap W$ is a cocompact lattice in $W$, the image of $\Delta$ in $H\slash W$ is also a cocompact lattice,
			\item $\psi$ restricts to an automorphism of $W$, and hence descends to some automorphism $\overline{\psi}$ of $H\slash W$, and
			\item 
		$$\bigcup_{n\geq 0} \overline{\psi}^{-n}(\Delta\slash (\Delta\cap W)) = q_W(D)$$
	is dense in $H\slash W$ because $D$ is dense in $H$.
		\end{itemize}
All that prevents us from applying the inductive hypothesis is that $\Delta$ may not be torsion-free. But $\Delta$ is a finitely generated abelian group, so its torsion subgroup Tor$(\Delta)$ is a finite normal subgroup. It is $\overline{\psi}$-invariant, and since $\overline{\psi}$ is injective and Tor$(\Delta)$ is finite, $\overline{\psi}$ restricts to an automorphism of Tor($\Delta$). This allows us to consider $(H\slash W)\slash \text{Tor}(\Delta)$ instead of $H\slash W$. The map 
	$$H\slash W\rightarrow (H\slash W)\slash\text{Tor}(\Delta)$$
is a finite degree cover, so one easily verifies from the above that $\Delta$ is a cocompact lattice, and the image of $\Delta$ is now torsion-free. Further $\overline{\psi}$ descends to an automorphism and the image of $D$ is dense. 
	
	By the inductive hypothesis applied to $H\slash W\slash \text{Tor}(\Delta)$, we know that $H\slash W\slash\text{Tor}(\Delta)$ is a vector group. The map $H\slash W\rightarrow H\slash W\slash\text{Tor}(\Delta)$ is a finite connected covering of the vector group $H\slash W\slash\text{Tor}(\Delta)$, and hence an isomorphism. Therefore $H\slash W$ is a vector group. Thus $H$ is an extension of vector groups
		$$1\rightarrow W\rightarrow H\rightarrow H\slash W\rightarrow 1.$$
	Since $H$ is abelian, this sequence splits. Therefore $H$ is also a vector group.\end{proof}
	
Lemma \ref{lem:tech} readily applies to our situation and shows that $G^0$ is a vector group. This finishes our investigation of $G$, and we return to studying $M$:	
	
\begin{lemnr} There is a finite cover $M'\to M$ such that 
		\begin{enumerate}[(i)]
			\item $p:M\to M$ lifts to a strongly regular self-cover $p':M'\to M'$, and
			\item Let $F'$ be the asymptotic deck group of $p'$. Then $\overline{F'}$ is a compact connected torus that acts freely on $M'$.
		\end{enumerate}
		\label{lem:cover}
\end{lemnr}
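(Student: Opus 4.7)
The plan is to set $M':=\widehat{M}/\Lambda_0$ with $\Lambda_0:=\Lambda\cap G^0$, and show that $\overline{F'}$ is the torus $T:=G^0/\Lambda_0$, which acts freely because $G^0$ is a vector group (Lemma \ref{lem:tech}) and vector groups contain no nontrivial compact subgroups.

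First I would verify $M'\to M$ is a finite cover. Since $\overline{F}$ is connected (Lemma \ref{lem:fconn}), $G^0$ surjects onto $\overline{F}$, so $G=G^0\Lambda$; combined with $|G/G^0|<\infty$ (Lemma \ref{lem:vconn}) this yields $\Lambda/\Lambda_0\cong G/G^0$ finite. The map $\overline{\varphi}$ is an automorphism of $G$ (Lemma \ref{lem:aut}) preserving $G^0$, and $\overline{\varphi}(\Lambda)\subseteq\Lambda$, so $\overline{\varphi}(\Lambda_0)\subseteq\Lambda_0$ and $\hat{p}$ descends to $p':M'\to M'$. Strong regularity of $p'$ follows from checking $\varphi^n(\Gamma')=\varphi^n(\Gamma)\cap\Gamma'$ where $\Gamma':=\pi_1(M')$; this identity uses $\overline{\varphi}^{-n}(\Lambda_0)\cap\Lambda=\Lambda_0$ (which itself follows from $\overline{\varphi}$ being a $G$-automorphism preserving $G^0$), and then $\varphi^n(\Gamma')\triangleleft\Gamma'$ because $\varphi^n(\Gamma)\triangleleft\Gamma$.

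To identify $\overline{F'}=T$: lifting deck transformations of $(p')^n$ to $\widehat{M}$ gives the group $\overline{\varphi}^{-n}(\Lambda_0)/\Lambda_0$, so $F'=(D\cap G^0)/\Lambda_0\subseteq T$ where $D:=\bigcup_n\overline{\varphi}^{-n}(\Lambda)$. Since $F=D/\Lambda$ is dense in $\overline{F}=G/\Lambda$ and $\Lambda$ is discrete in $G$, the subgroup $D$ is dense in $G$. Since $G^0$ is open in the Lie group $G$, density of $D$ in $G$ implies $D\cap G^0$ is dense in $G^0$, so $\overline{F'}=G^0/\Lambda_0=T$, a compact connected torus.

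Finally, for freeness of the $T$-action: a direct computation using that $G$ is abelian and $\Lambda_0\subseteq G^0$ gives $\text{Stab}_T([x])=\Lambda_0\text{Stab}_{G^0}(x)/\Lambda_0$ for $x\in\widehat{M}$, so it suffices to show $\text{Stab}_{G^0}(x)=\{e\}$. The subgroup $\text{Stab}_G(x)$ injects into $\overline{F}$ via $G\to G/\Lambda$ (trivial kernel since $\Lambda$ acts freely on $\widehat{M}$), so $\text{Stab}_{G^0}(x)=\text{Stab}_G(x)\cap G^0$ is a compact subgroup of the vector group $G^0$, hence trivial. The main obstacle is the algebraic bookkeeping—identifying the asymptotic deck group of $p'$ as a subgroup of $T$ and verifying strong regularity—while the key geometric fact of freeness follows cleanly from $G^0$ being a vector group.
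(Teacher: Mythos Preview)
Your proposal is correct and follows essentially the same route as the paper: define $M':=\widehat{M}/\Lambda_0$ with $\Lambda_0=\Lambda\cap G^0$, identify $\overline{F'}=G^0/\Lambda_0$, and deduce freeness from the fact that $G^0$ is a vector group with no nontrivial compact subgroups. The only notable difference is that your strong regularity argument (via $\varphi^n(\Gamma')=\varphi^n(\Gamma)\cap\Gamma'$) is more elaborate than needed---the paper simply observes that since $\varphi^\infty(\Gamma)\subseteq\varphi^n(\Gamma')$, normality of $\varphi^n(\Gamma')$ in $\Gamma'$ reduces to normality of $\overline{\varphi}^n(\Lambda_0)$ in the abelian group $\Lambda_0$, which is automatic.
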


\begin{proof} As above, write $\Lambda_0:=\Lambda\cap G^0$. Since $G$ has finitely many components by Lemma \ref{lem:vconn}, $\Lambda_0$ is of finite index in $\Lambda$. Hence $M':=\widehat{M}\slash \Lambda_0$ is a finite cover of $M$. We claim that $M'$ has the required properties. First let us show that $p:M\rightarrow M$ lifts to a strongly cover $p':M'\rightarrow M'$. The lift $p'$ exists precisely when $\overline{\varphi}(\Lambda_0)\subseteq \Lambda_0$. This holds because $\overline{\varphi}$ is given by conjugation by $\hat{p}$, and hence preserves $G^0$. Further note that $p'$ is also strongly regular because $\Lambda_0$ is abelian, so that $\overline{\varphi}^n(\Lambda_0)$ is automatically normal in $\Lambda_0$.

It remains to analyze the asymptotic deck group $F'$ of $p'$. Write $D_0:=D\cap G^0$. Then we have $F'=D_0\slash \Lambda_0$, and $\overline{F'}=G^0\slash \Lambda_0$. In particular, $\overline{F'}$ is a connected torus. We need to show that $\overline{F'}$ acts freely on $M'$.

Let $x_0\in M$ and choose $\overline{x}_0\in \overline{M}$ that projects to $x_0$. Let $S$ be the stabilizer in $\overline{F'}$ of $x_0$. Because $S$ fixes a point, the $S$-action lifts to an $S$-action on $\widehat{M}$ fixing $\overline{x}_0$ (even if $S$ is not connected!). The group of lifts of $\overline{F'}$ to $\widehat{M}$ is exactly $G^0$, so we have an embedding $S\hookrightarrow G^0$. But since $G^0$ is a vector group, it has no nontrivial compact subgroups, so we see that $S$ is trivial, as desired.\end{proof}	
	
Armed with Lemma \ref{lem:cover}, we can finish the proof of Theorem \ref{thm:proper}. 

\begin{proof}[Proof of Theorem \ref{thm:proper}] Let $p':M'\rightarrow M$ be the cover produced by Lemma \ref{lem:cover}. Let $p':M'\to M'$ be a lift of $p$ and set $T:=\overline{F'}$. Since $T$ acts smoothly and freely on $M'$, the map natural quotient map $M'\to M'\slash T$ is a smooth principal $T$-bundle. By Lemma \ref{lem:aext}, applied to $p'$, we have a surjective map $\overline{\alpha}:T\to T$ such that $p'$ is $\overline{\alpha}$-equivariant, i.e. for any $x\in M'$ and $g\in T$, we have
	$$p'(gx)=\overline{\alpha}(g)p'(x).$$
So we see that $p'$ is a bundle map, and identifying the $T$-orbits of $x$ and $p(x)$ with $T$, the restriction of $p'$ to a $T$-orbit is exactly given by $\overline{\alpha}$. The map $\overline{\alpha}$ is linear: Indeed, $\overline{\alpha}$ by its very definition lifts to the map $\overline{\varphi}:G^0\rightarrow G^0$. As noted above, Lemma \ref{lem:tech} implies that $G^0$ is a vector group, and $\overline{\varphi}$ is a linear map with determinant $d$. Therefore $p'$ restricts to a linear cover of degree $d$ on each $T$-orbit (which can be identified with $T=G^0\slash \Lambda_0$).

It only remains to show that we have $\pi_1(M')\cong \pi_1(T)\times \pi_1(M'\slash T)$. To obtain this splitting, we consider the long exact sequence on homotopy groups for the fiber bundle $M'\rightarrow M'\slash T$. Because $\pi_2(T)=0$, the end of this long exact sequence reduces to
	$$1\rightarrow \pi_1(T)\rightarrow \pi_1(M')\rightarrow \pi_1(M'\slash T)\rightarrow 1.$$
We claim this sequence splits trivially. To show this, we produce a left splitting $\pi_1(M')\rightarrow \pi_1(T)$. Note that since $T=G^0\slash \Lambda_0$, we have $\pi_1(T)=\Lambda_0$. The left splitting is then just the map $q: \pi_1(M')\rightarrow \Lambda_0$ produced by Theorem \ref{thm:group}. This completes the proof of Theorem \ref{thm:proper}.\end{proof}

\section{Holomorphic endomorphisms of K\"ahler manifolds}
\label{sec:kaehler}

Suppose $M$ is a K\"ahler manifold and $p:M\rightarrow M$ is a holomorphic strongly regular nontrivial self-cover. 

\begin{lemnr} $p$ is smoothly proper. \end{lemnr}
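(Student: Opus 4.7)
My plan is to exploit holomorphicity to place $F$ inside the biholomorphism group $\Aut(M)$, and then invoke Kähler-geometric finiteness results to show that $F$ has compact closure.

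First I would verify $F \subseteq \Aut(M)$. Indeed, each $p^n$ is a holomorphic local biholomorphism, and any deck transformation $f$ of $p^n$ agrees locally with a composition of local inverses of $p^n$ with $p^n$, hence is holomorphic. Moreover, $F$ is a torsion group, since every element lies in a finite $F_n$.

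Next I would pass to cohomology via the pullback homomorphism
$$\rho:F \longrightarrow \Aut(H^2(M,\bbZ)).$$
Since $F$ is torsion, the image $\rho(F)$ is a torsion subgroup of $\text{GL}_N(\bbZ)$ with $N = b_2(M)$. By Minkowski's lemma (the principal congruence subgroup mod $3$ is torsion-free), any such group injects into the finite group $\text{GL}_N(\bbZ\slash 3\bbZ)$ and is therefore itself finite. Hence $F_0 := \ker(\rho|_F)$ has finite index in $F$, and every element of $F_0$ preserves the Kähler class $[\omega] \in H^{1,1}(M,\bbR)$.

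The key remaining step is to show that $F_0$ is precompact in $\text{Diff}(M)$. For this I would invoke the theorem of Lieberman-Fujiki: for any compact Kähler manifold, the stabilizer $\Aut_{[\omega]}(M) \subseteq \Aut(M)$ of a Kähler class contains the identity component $\Aut_0(M)$ as a subgroup of finite index. Since $\Aut(M)$ is a finite-dimensional Lie group by Bochner-Montgomery-Kobayashi, $\Aut_{[\omega]}(M)$ is then a Lie group with only finitely many connected components. A standard fact---following from Jordan-Schur applied to the adjoint representation, together with elementary analysis of the center---says that every torsion subgroup of such a Lie group has compact closure. Applied to $F_0 \subseteq \Aut_{[\omega]}(M)$, this gives precompactness in $\Aut(M)$. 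Since $\Aut(M)$ is closed in $\text{Diff}(M)$ and the Lie-group topology on $\Aut(M)$ coincides with the $C^\infty$-topology (by Cauchy's integral formula, $C^0$-convergence of biholomorphisms on a compact complex manifold already implies $C^\infty$-convergence), we conclude that $F_0$, and hence $F$, is precompact in $\text{Diff}(M)$ in the $C^\infty$-topology.

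The principal obstacle is the Lieberman-Fujiki finiteness theorem, which is where the Kähler hypothesis genuinely enters: Hodge theory forces the action of $\Aut(M)$ on $H^{1,1}$ to factor through a group preserving the integral structure of the cone of positive classes, and the stabilizer of any Kähler class in such an action turns out to be finite modulo $\Aut_0(M)$.
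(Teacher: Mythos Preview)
Your proof is correct and follows the same overall architecture as the paper's: embed $F$ in $\Aut(M)$, invoke Fujiki--Lieberman to land a finite-index subgroup $F_0$ inside a Lie group with finitely many components, and then argue that torsion forces precompactness. The execution differs in two places. First, to show that the image of $F$ in $\text{GL}(H_\ast(M,\bbZ))$ is finite, the paper appeals to the structure of $F$ as a product of quasicyclic and finite cyclic groups (from Section~\ref{sec:group}) and the resulting divisibility properties; your Minkowski argument is cleaner and only needs that $F$ is torsion. Second, for the final compactness step the paper takes $F_0 = F\cap\Aut^0(M)$, uses that $F$ is abelian (Lemma~\ref{lem:ab}), and analyzes the closure $\overline{F_0}$ directly via W\"ustner's classification of closed abelian subgroups of connected Lie groups, showing by hand that $\pi_0(\overline{F_0})$ is finite and that the $\bbR^l$-factor of $\overline{F_0}^0$ vanishes. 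Your route trades this explicit analysis for the general (true, but not entirely elementary) fact that torsion subgroups of finite-component Lie groups are relatively compact; the payoff is that you never use the abelianness of $F$, so your argument for this lemma is more self-contained.
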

\begin{proof} Note that the asymptotic deck group $F$ of $p$ acts by biholomorphisms. Boothby-Kobayashi-Wang \cite{holaut} proved that the group of biholomorphisms of a closed almost complex manifold is a Lie group (possibly with infinitely many components). Let Aut$(M)$ denote the group of biholomorphisms of $M$ and let Aut$^0(M)$ denote the connected component of Aut$(M)$ that contains the identity element. Fujiki and Liebermann independently proved that Aut$^0(M)$ is a subgroup of finite index in the kernel of the action of Aut$(M)$ on homology \cite{fujiki, lieberman}.

Note that the action of $F$ on homology factors through a finite group: Namely, the action on homology is given by a map $F\rightarrow GL(H_\ast(M,\bbZ))$. Since $H_\ast(M,\bbZ)$ has finite rank, there is a bound on the order of torsion elements of GL($H_\ast(M,\bbZ))$. It follows that any subgroup of GL($H_\ast(M,\bbZ))$ every one of whose elements has arbitrarily large roots, is finite. Since $F$ is a finite product of finite cyclic and quasicyclic groups, $F$ is generated by elements with arbitrarily large roots. Hence the image of $F$ in GL$(H_\ast(M,\bbZ))$ is finite.

It follows that $F_0:=F\cap \Aut^0(M)$ is of finite index in $F$. Let $H:=\overline{F_0}$ be the closure of $F_0$ in $\Aut^0(M)$. Since $\Aut(M)$ is a closed subgroup of Diff$(M)$, it follows that $H$ is also the closure of $F_0$ in Diff$(M)$. Therefore to show that $p$ is proper, it suffices to show that $H$ is compact. Note that $H$ is an abelian Lie group (not necessarily connected). We will again use the classification by W\"ustner of closed abelian subgroups of connected Lie groups \cite{abelianlie}. In particular he showed that the torsion subgroup of $\pi_0 H$ is finite. We conclude that $H$ has finitely many components. But $\pi_0 H$ is torsion (because $F_0\subseteq H$ is dense and torsion), so $\pi_0 H$ is finite.

Let $H^0$ be the connected component of the identity of $H$, and set $F_{00}:=F_0\cap H^0$. Since $H^0$ is a connected abelian Lie group, there are nonnegative integers $k,l\geq 0$ such that $H^0\cong T^k\times \bbR^l$. Since $F_{00}$ is dense in $H^0$, the image of $F_{00}$ in $H^0\slash T^k\cong \bbR^l$ is dense. On the other hand $F_{00}$ is torsion and $\bbR^l$ is torsion-free, so the image of $F_{00}$ in $\bbR^l$ must be trivial. Hence we must have that $l=0$, so that $H^0$ is compact. Since $H$ has finitely many components, it follows that $H$ is compact, as desired.\end{proof}

We finish the proof of Theorem \ref{thm:kaehler}. Since $p$ is smoothly proper, Theorem \ref{thm:proper} shows that there is a finite cover $M'\rightarrow M$ such that $p$ lifts to a smoothly proper, strongly regular self-cover $p':M'\rightarrow M'$, and $M'$ is a smooth principal $T$-bundle $M'\rightarrow N$. Here $T=\overline{F'}$ is the closure of the asymptotic deck group of $p'$.
Since $M$ is K\"ahler and $p$ is holomorphic, $F'$ acts on $M'$ by biholomorphisms. Since the biholomorphism group is closed in the group of all diffeomorphisms, $T$ also acts on $M'$ by biholomorphisms, which shows that the bundle $M'\rightarrow N$ is holomorphic.

As in the previous section, we know that $p'$ descends to a diffeomorphism of $N$. Since $p'$ is holomorphic and $M'\rightarrow N$ is a holomorphic bundle, the induced diffeomorphism of $N$ is also holomorphic. In addition, on each $T$-orbit, $p$ restricts to a holomorphic automorphism of degree $d$.

It remains to show that $N$ is K\"ahler and the bundle $M'\rightarrow N$ is trivial. For this we need results of Blanchard \cite{kaehlerfiber}. He showed that if the the total space of a holomorphic fiber bundle is K\"ahler, then so is the base. In addition he gives criteria for the bundle to admit a flat holomorphic connection. In the case of a holomorphic principal torus bundle with K\"ahler total space, the results of Blanchard can be used to show the existence of a flat holomorphic connection; a short independent proof of the existence of a flat connection for K\"ahler principal torus bundles has been given by Biswas \cite{kaehlerflat}. 

The principal flat $T$-bundle $M'\rightarrow N$ is characterized by its monodromy, which is a representation
	$$\rho: \pi_1(N)\rightarrow T$$
where we write $T=V\slash \overline{\Lambda}$ for a complex vector space $V$ and a cocompact lattice $\overline{\Lambda}$. Recall that two flat principal $T$-bundles over $N$ with monodromies $\rho_0$, $\rho_1$ are equivalent if and only if $\rho_0$ and $\rho_1$ are isotopic, i.e. $\rho_0$ and $\rho_1$ belong to the same path-component of Hom($\pi_1(N),T$). Our goal is to show that after passing to a further finite cover, $\rho$ is isotopic to the trivial map.

Since $\rho$ takes values in the abelian group $T$, we have a factorization of $\rho$ through the abelianization $\Delta:=\pi_1(N)^{\text{ab}}$ of $\pi_1(N)$. Since $\Delta$ is a finitely generated abelian group, we can write
	$$\Delta=\text{Tor}(\Delta)\oplus \Delta_\infty$$
where $\Delta_\infty\subseteq \Delta$ is a free abelian group. If $\Delta$ was torsion-free, we could directly isotope $\rho$ to the trivial map. Indeed, for any $k\geq 0$, we have
	\begin{equation} \text{Hom}(\bbZ^k, T)\cong T^k.\label{eq:component} \end{equation}
Therefore the following claim will suffice to finish the proof.
\begin{claimnr} There exists a finite cover $M''\rightarrow M'$ of $M'$ such that
	\begin{enumerate}[(i)]
		\item $p'$ lifts to a strongly regular self-cover $p'':M''\rightarrow M''$,
		\item The action of $T$ on $M'$ lifts to an action of $T$ on $M''$, and
		\item The monodromy of $M''\rightarrow M''\slash T$ is given by $\rho|_{\pi_1(M'')}$ and factors through a torsion-free group.
	\end{enumerate}
\end{claimnr}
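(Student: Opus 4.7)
The plan is to build $M''$ by pulling back the principal $T$-bundle $M'\to N$ along a carefully chosen finite Galois cover of $N$. Concretely, let $K\subseteq\pi_1(N)$ denote the preimage of $\Delta_\infty$ under the abelianization map $\pi_1(N)\to\Delta$; since $\mathrm{Tor}(\Delta)=\Delta/\Delta_\infty$ is finite, $K$ is a normal subgroup of finite index. Let $\tilde N\to N$ be the corresponding Galois cover and set $M'':=\tilde N\times_N M'$, which is a finite cover of $M'$. Condition (ii) is then immediate, since the pullback of a principal $T$-bundle is itself a principal $T$-bundle.

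For (i), I would first argue that $\bar p'$ lifts to a diffeomorphism $\bar p'':\tilde N\to\tilde N$: the map $p'$ descends to a diffeomorphism of $N$, so $\bar p'_\ast$ is an automorphism of $\pi_1(N)$ and of its abelianization $\Delta$, and therefore preserves the characteristic subgroup $\mathrm{Tor}(\Delta)$, hence also $\Delta_\infty$ and its preimage $K$. Because $p'$ is a $T$-bundle map covering $\bar p'$, the assignment $(x,m)\mapsto(\bar p''(x),p'(m))$ defines a well-defined cover $p'':M''\to M''$ lifting $p'$. For the strong regularity of $p''$, I would exploit the product decomposition $\pi_1(M'')=\pi_1(T)\times K$, obtained by restricting to $\pi_1(M'')$ the left splitting $q:\pi_1(M')\to\pi_1(T)$ produced by Theorem \ref{thm:group} (the fiber inclusion $T\hookrightarrow M''$ ensures $q$ remains surjective after restriction). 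Since $\varphi'$ respects the product structure of $\pi_1(M')$ (acting as the linear endomorphism $\bar\varphi$ on $\pi_1(T)$ and as $\bar p'_\ast$ on $\pi_1(N)$), one computes
$$(\varphi'')^n(\pi_1(M''))=\bar\varphi^n(\pi_1(T))\times K,$$
which is normal in $\pi_1(T)\times K$ because the first factor is abelian and the second is the whole group $K$.

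Finally, for (iii), the monodromy of the pullback bundle $M''\to\tilde N$ is obtained by precomposing $\rho$ with the inclusion $\pi_1(\tilde N)=K\hookrightarrow\pi_1(N)$; this manifestly factors through the image of $K$ in $\Delta$, which is $\Delta_\infty$, a torsion-free group. The main subtlety I anticipate is the strong-regularity verification: it depends crucially on $\varphi'$ preserving the product decomposition of $\pi_1(M'')$, which is not visible from the pullback construction itself but is forced by the structural output of Theorems \ref{thm:group} and \ref{thm:proper}.
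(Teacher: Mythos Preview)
Your overall strategy---pass to a finite cover of the base $N$ that kills the torsion in $\Delta=\pi_1(N)^{\mathrm{ab}}$ and then pull back the $T$-bundle---is exactly the paper's approach, and your treatment of (ii) via the pullback construction is in fact cleaner than the paper's argument with the group of lifts. However, there is a genuine gap in your argument for (i).

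You write that $\bar p'_\ast$ ``preserves the characteristic subgroup $\mathrm{Tor}(\Delta)$, hence also $\Delta_\infty$ and its preimage $K$.'' The first clause is correct, but the ``hence'' does not follow: $\Delta_\infty$ is a \emph{chosen} complement of $\mathrm{Tor}(\Delta)$ in $\Delta$, and such complements are not unique when $\mathrm{Tor}(\Delta)\neq 0$. An automorphism that preserves a direct summand need not preserve a complementary summand. (Concretely, in $\bbZ/2\times\bbZ$ the automorphism $(a,n)\mapsto(a+n,n)$ fixes the torsion but moves $\{0\}\times\bbZ$ to the other complement.) So $K$ need not be $\bar p'_\ast$-invariant, and you cannot conclude that $\bar p'$ lifts to $\tilde N$.

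The paper confronts exactly this issue and repairs it by replacing $\Delta_\infty$ with a genuinely characteristic finite-index torsion-free subgroup: set $C:=[\Delta:\Delta_\infty]$ and take $\Delta_C$ to be the intersection of all index-$C$ subgroups of $\Delta$. Then $\Delta_C\subseteq\Delta_\infty$ is torsion-free, of finite index, and characteristic, so $\varphi_\Delta(\Delta_C)=\Delta_C$ automatically. With this correction your pullback construction goes through. One further remark: your verification of strong regularity assumes that $\varphi'$ respects the product decomposition $\pi_1(M')\cong\pi_1(T)\times\pi_1(N)$. This is true, but it is not a formal consequence of Theorems \ref{thm:group} and \ref{thm:proper} as stated; it requires identifying $\pi_1(N)$ with the stable image $\varphi^\infty(\pi_1(M'))$ (on which $\varphi$ is an automorphism) and $\pi_1(T)$ with the image of an orbit map, both of which are $\varphi$-invariant. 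The paper makes this explicit before defining the cover.
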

\begin{proof} We need to investigate the splitting $\pi_1(M')=\pi_1(T)\times \pi_1(N)$ more carefully. This splitting is not canonical, but we claim that nevertheless we can choose a splitting that is $\varphi$-invariant. Indeed, we identify $\pi_1(T)$ with a subgroup of $\pi_1(M')$ by using an orbit map
	$$\theta_{x_0}: T\rightarrow M', \quad g\mapsto gx_0$$
for some fixed $x_0\in M'$. Any two orbit maps induce the same map on fundamental groups, so the image of $\theta_{x_0}$ does not depend on $x_0$. The image $\theta_{x_0\ast} \pi_1(T)$ is then $\varphi$-invariant because $p'$ maps orbits to orbits.

Further we can identify $\pi_1(N)$ with the stable image $\varphi^\infty(\pi_1(M'))$ of $\pi_1(M')$ because the composition
	$$\varphi^\infty(\pi_1(M'))\hookrightarrow \pi_1(M')\rightarrow \pi_1(N)$$
is an isomorphism. The inverse $\pi_1(N)\rightarrow \varphi^\infty(\pi_1(M'))$ is therefore a section of $\pi_1(M')\rightarrow \pi_1(N)$. So
	$$\pi_1(M')\cong \theta_{x_0\ast} \pi_1(T)\times \varphi^\infty(\pi_1(M'))$$
is the desired $\varphi$-invariant splitting. Our next goal is to define the cover $M''\rightarrow M'$.

Identifying $\pi_1(N)$ with $\varphi^\infty(\pi_1(M'))$, we have that $\varphi$ restricts to an automorphism of $\pi_1(N)$. Therefore $\varphi$ descends to some automorphism $\varphi_\Delta$ of the abelianization $\Delta$ of $\pi_1(N)$. As above we write
	$$\Delta=\text{Tor}(\Delta)\oplus \Delta_\infty.$$
Note that $\varphi_\Delta$ preserves $\text{Tor}(\Delta)$, but does not necessarily preserve $\Delta_\infty$. Write $C:=[\Delta:\Delta_\infty]$, and let $\Delta_C$ be the intersection of all subgroups of $\Delta$ of index $C$. Then $\Delta_C$ is torsion-free (because it is contained in $\Delta_\infty$). Further $\Delta_C$ is a finite index characteristic subgroup of $\Delta$, and hence  $\varphi_\Delta$ restricts to an automorphism of $\Delta_C$. 

Let $\Gamma''$ be the preimage in $\pi_1(M')$ of $\Delta_C$ under the composition
	$$\pi_1(M')\rightarrow \pi_1(N)\rightarrow \Delta.$$
Then $\Gamma''$ is a finite index subgroup of $\pi_1(M')$ and hence corresponds to a cover $M''\rightarrow M'$. The self-cover $p':M'\rightarrow M'$ lifts to self-cover $p'':M''\rightarrow M''$ because $\varphi(\Gamma'')\subseteq \Gamma''$. This shows that $M''\rightarrow M'$ satisfies Property (i) of the claim.

Our next task is to show that $M''\rightarrow M'$ satisfies (ii), i.e. the action of $T$ on $M'$ lifts to an action of $T$ on $M''$. Consider the group of lifts $H$ of $T$ to $M''$. Then $H$ is an extension given by
	$$1\rightarrow \pi_1(M')\slash\pi_1(M'') \rightarrow H\rightarrow T\rightarrow 1.$$
Let $H^0$ be the component of $H$ containing the identity. Then the action of $T$ on $M'$ lifts to an action of $T$ on $M''$ precisely when the map $H\rightarrow T$ restricts to an isomorphism $H^0\rightarrow T$. In turn $H^0\rightarrow T$ is an isomorphism precisely when the induced map $\pi_1(H^0)\rightarrow \pi_1(T)$ is an isomorphism. 

Note that $\pi_1(H^0)\rightarrow \pi(T)$ is automatically injective (because $H^0\rightarrow T$ is a cover). To show it is also surjective, consider an orbit map $\theta: H^0\rightarrow M''$. Then the map on fundamental groups induced by $\theta$ is a map
	$$\pi_1(H^0)\rightarrow \Gamma''\cong \pi_1(T)\times \pi_1(N'')$$
such that the square
	$$\xymatrix{
	\pi_1(H^0) \ar[r] \ar[d] & \Gamma'' \cong \pi_1(T)\times \pi_1(N'') \ar[d]\\
	\pi_1(T) \ar[r] & \Gamma\cong \pi_1(T)\times \pi_1(N)
	}$$
commutes. This shows that $\pi_1(H^0)$ surjects onto the first factor of $\pi_1(T)\times\pi_1(N)$, so that $\pi_1(H^0)\rightarrow \pi_1(T)$ is surjective. We already had injectivity, so $\pi_1(H^0)\rightarrow \pi_1(T)$ is an isomorphism, as desired.

Finally we show that $M''\rightarrow M''\slash T$ has trivial monodromy. Indeed, let $\rho''$ be the monodromy of $M''\rightarrow M''\slash T$. Then $\rho''$ is just given by the composition
	$$\Gamma'' \hookrightarrow \Gamma' \overset{\rho}{\to} T.$$
Obviously $\rho$ factors through the abelianization $\Gamma'\rightarrow \Delta$ and descends to a map
	$$\rho_\Delta:\Delta\cong \Delta_\infty\oplus \text{Tor}(\Delta)\to T,$$
and we have already seen above (in Equation \eqref{eq:component}) that $\Delta_\infty\subseteq \ker(\rho_\Delta)$. On the other hand, we defined $M''$ so that the composition of the natural inclusion and the abelianization map
	$$\pi_1(M''\slash T)\hookrightarrow \pi_1(M'\slash T)\overset{\text{ab}}{\to} \Delta$$
has image $\Delta_C$. Since $\Delta_C\subseteq \Delta_\infty$,  it follows that $\rho''$ is trivial, as desired.  This completes the proof of Theorem \ref{thm:kaehler}. \end{proof}

\section{Residual self-covers} \label{sec:respf}

In this section we prove Corollaries \ref{cor:charres} (about residual self-covers) and \ref{cor:charexp} (that relates strongly regular expanding maps and linear expanding maps on tori).

\begin{proof}[Proof of Corollary \ref{cor:charres}] Let $X$ be a finite CW-complex and $p:X\rightarrow X$ a strongly regular, residual self-cover. As usual we write $\Gamma=\pi_1(X)$ and $\varphi:=p_\ast$ for the map induced by $p$ on fundamental groups. We need to show that $\Gamma$ is abelian.

By Theorem \ref{thm:group}, there exists a surjection
	$$q:\pi_1(X)\rightarrow \bbZ^n$$
(for some $n\geq 1$) such that $\varphi$ restricts to an automorphism on $\ker(q)$. Therefore we have $\ker(q)\subseteq \varphi^m(\Gamma)$ for every $m\geq 1$. Since $p$ is residual, we have $\cap_m \varphi^m(\Gamma)=1$, so $\ker(q)$ must be trivial. This shows that $q$ is an isomorphism, as desired. 

Now suppose that $X$ is a closed aspherical manifold. Then $X$ is a closed aspherical manifold with abelian fundamental group. The Borel conjecture for manifolds with abelian fundamental groups (proven for $n>4$ by Farrell-Hsiang \cite{borelab}, for $n=4$ uses Freedman's work, and for $n=3$ uses Perelman's solution of Thurston's geometrization conjecture) exactly states that such a manifold is homeomorphic to a torus. \end{proof}

\begin{proof}[Proof of Corollary \ref{cor:charexp}] An argument of Shub shows that any manifold admitting an expanding map is aspherical \cite{shubexp}: Namely, $p$ lifts to a homeomorphism $\widetilde{p}$ of $\widetilde{M}$. Now fix any $x\in\widetilde{M}$ and $0<r<\textrm{injrad}(M)$. Then $B(x,r)\subseteq\widetilde{M}$ is an embedded ball and 
	$$\widetilde{M}=\bigcup_{n\geq 0} \tilde{p}^n(B(x,r))$$
is a union of embedded balls and hence contractible.
	
	Further it is easy to see that $p$ is a self-cover because it is a local diffeomorphism of the closed manifold $M$. Since $p$ is expanding, it has to be a residual self-cover: Indeed, $\tilde{p}$ is $p_\ast:\pi_1(M)\rightarrow \pi_1(M)$ equivariant, and hence
		$$p_\ast^n(\pi_1(M))=\{\gamma\in\pi_1(M)\mid \gamma x \in \tilde{p}^n(\pi_1(M)\cdot x)\}.$$
	Since $\widetilde{p}$ is globally expanding and the orbit $\pi_1(M)\cdot x$ is a discrete subset of $\widetilde{M}$, it follows that for every $\gamma\in\pi_1(M)$ there exists $n\geq 1$ such that $\gamma x\notin \tilde{p}^n(\pi_1(M)\cdot x)$. 
	
	We characterized nontrivial, strongly regular, residual self-covers in Corollary \ref{cor:charres}. Using the part of this result that applies to aspherical manifolds, we find that $M$ is finitely (topologically) covered by a torus $T$ and $p$ lifts to an expanding map $p':T\rightarrow T$ (with respect to a potentially exotic smooth structure). This implies that $p'$ is homotopic to a linear expanding map on $T$. Shub was able to show that $p'$ is not just homotopic, but also topologically conjugate to a linear expanding map on $T$ \cite{shubexp}.
	\end{proof}

\bibliographystyle{alpha}
\bibliography{strongregcover}

\begin{thebibliography}{{Bau}90}

\bibitem[{Bau}90]{rkpact}
Christoph {Baumgartner}.
\newblock {\em {Kohomologie freier p-Torus-Operationen.}}
\newblock Konstanz: Universit\"at Konstanz, Fak. f. Math., 1990.

\bibitem[Bea01]{proj1}
Arnaud Beauville.
\newblock Endomorphisms of hypersurfaces and other manifolds.
\newblock {\em Internat. Math. Res. Notices}, (1):53--58, 2001.

\bibitem[Bel03]{igornilp}
Igor Belegradek.
\newblock On co-{H}opfian nilpotent groups.
\newblock {\em Bull. London Math. Soc.}, 35(6):805--811, 2003.

\bibitem[Bis07]{kaehlerflat}
Indranil Biswas.
\newblock On abelian principal bundles.
\newblock {\em Arch. Math. (Basel)}, 88(2):164--172, 2007.

\bibitem[BKW63]{holaut}
W.~M. Boothby, Shoshichi Kobayashi, and H.~C. Wang.
\newblock A note on mappings and automorphisms of almost complex manifolds.
\newblock {\em Ann. of Math. (2)}, 77:329--334, 1963.

\bibitem[Bla56]{kaehlerfiber}
Andr{{\'e}} Blanchard.
\newblock Sur les vari{\'e}t{\'e}s analytiques complexes.
\newblock {\em Ann. Sci. Ecole Norm. Sup. (3)}, 73:157--202, 1956.

\bibitem[Car83]{rk2act}
G.~Carlsson.
\newblock On the homology of finite free {$({\bf Z}/2)^{n}$}-complexes.
\newblock {\em Invent. Math.}, 74(1):139--147, 1983.

\bibitem[FH78]{borelab}
F.~T. Farrell and W.~C. Hsiang.
\newblock The topological-{E}uclidean space form problem.
\newblock {\em Invent. Math.}, 45(2):181--192, 1978.

\bibitem[FJ78]{exoticexp}
F.~T. Farrell and L.~E. Jones.
\newblock Examples of expanding endomorphisms on exotic tori.
\newblock {\em Invent. Math.}, 45(2):175--179, 1978.

\bibitem[FN07]{projlow2}
Yoshio Fujimoto and Noboru Nakayama.
\newblock Endomorphisms of smooth projective 3-folds with nonnegative {K}odaira
  dimension. {II}.
\newblock {\em J. Math. Kyoto Univ.}, 47(1):79--114, 2007.

\bibitem[Fra70]{franksexp}
John Franks.
\newblock Anosov diffeomorphisms.
\newblock In {\em Global {A}nalysis ({P}roc. {S}ympos. {P}ure {M}ath., {V}ol.
  {XIV}, {B}erkeley, {C}alif., 1968)}, pages 61--93. Amer. Math. Soc.,
  Providence, R.I., 1970.

\bibitem[Fuj78]{fujiki}
Akira Fujiki.
\newblock On automorphism groups of compact {K}{\"a}hler manifolds.
\newblock {\em Invent. Math.}, 44(3):225--258, 1978.

\bibitem[Fuj02]{projlow}
Yoshio Fujimoto.
\newblock Endomorphisms of smooth projective 3-folds with non-negative
  {K}odaira dimension.
\newblock {\em Publ. Res. Inst. Math. Sci.}, 38(1):33--92, 2002.

\bibitem[Gro81]{polygrowth}
M.~Gromov.
\newblock Groups of polynomial growth and expanding maps.
\newblock {\em Publ. Math. Inst. Hautes {\'E}tudes Sci.}, 53:53--78, 1981.

\bibitem[HP11]{kaehlerlow}
Andreas H{{\"o}}ring and Thomas Peternell.
\newblock Non-algebraic compact {K}{\"a}hler threefolds admitting
  endomorphisms.
\newblock {\em Sci. China Math.}, 54(8):1635--1664, 2011.

\bibitem[KW70]{kwartin}
Otto~H. Kegel and Bertram A.~F. Wehrfritz.
\newblock Strong finiteness conditions in locally finite groups.
\newblock {\em Math. Z.}, 117:309--324, 1970.

\bibitem[KW73]{locfin}
O.~Kegel and B.~Wehrfritz.
\newblock {\em Locally finite groups}.
\newblock North-Holland Publishing Co., Amsterdam-London; American Elsevier
  Publishing Co., Inc., New York, 1973.
\newblock North-Holland Mathematical Library, Vol. 3.

\bibitem[Lie78]{lieberman}
David~I. Lieberman.
\newblock Compactness of the {C}how scheme: applications to automorphisms and
  deformations of {K}{\"a}hler manifolds.
\newblock In {\em Fonctions de plusieurs variables complexes, {III} ({S}{\'e}m.
  {F}ran\c cois {N}orguet, 1975--1977)}, volume 670 of {\em Lecture Notes in
  Math.}, pages 140--186. Springer, Berlin, 1978.

\bibitem[Mon45]{smoothhs}
Deane Montgomery.
\newblock Topological groups of differentiable transformations.
\newblock {\em Ann. of Math. (2)}, 46:382--387, 1945.

\bibitem[NP11]{scaleinv}
Volodymyr Nekrashevych and G{{\'a}}bor Pete.
\newblock Scale-invariant groups.
\newblock {\em Groups Geom. Dyn.}, 5(1):139--167, 2011.

\bibitem[NZ09]{proj2}
Noboru Nakayama and De-Qi Zhang.
\newblock Building blocks of {\'e}tale endomorphisms of complex projective
  manifolds.
\newblock {\em Proc. Lond. Math. Soc. (3)}, 99(3):725--756, 2009.

\bibitem[Ol'80]{tarski}
A.~Yu. Ol'shanskii.
\newblock An infinite group with subgroups of prime orders.
\newblock {\em Izv. Akad. Nauk SSSR Ser. Mat.}, 44(2):309--321, 479, 1980.

\bibitem[Rob96]{robingps}
Derek J.~S. Robinson.
\newblock {\em A course in the theory of groups}, volume~80 of {\em Graduate
  Texts in Mathematics}.
\newblock Springer-Verlag, New York, second edition, 1996.

\bibitem[Shu70a]{shubexp}
Michael Shub.
\newblock Expanding maps.
\newblock In {\em Global {A}nalysis ({P}roc. {S}ympos. {P}ure {M}ath., {V}ol.
  {XIV}, {B}erkeley, {C}alif., 1968)}, pages 273--276. Amer. Math. Soc.,
  Providence, R.I., 1970.

\bibitem[Shu70b]{abartin}
V.~P. Shunkov.
\newblock Locally finite groups with a minimality condition for abelian
  subgroups.
\newblock {\em Algebra i Logika}, 9:579--615, 1970.

\bibitem[Wan02]{wangicm}
Shicheng Wang.
\newblock Non-zero degree maps between 3-manifolds.
\newblock In Tatsien Li, editor, {\em Proceedings of the {I}nternational
  {C}ongress of {M}athematicians. {V}ol. {II}}, pages 457--468. Higher
  Education Press, Beijing, 2002.

\bibitem[Whi49]{cellcover}
J.~H.~C. Whitehead.
\newblock Combinatorial homotopy. {I}.
\newblock {\em Bull. Amer. Math. Soc.}, 55:213--245, 1949.

\bibitem[W{\"u}s97]{abelianlie}
Michael W{\"u}stner.
\newblock On closed abelian subgroups of real {L}ie groups.
\newblock {\em J. Lie Theory}, 7(2):279--285, 1997.

\bibitem[YW99]{3dim}
Fengchun Yu and Shicheng Wang.
\newblock Covering degrees are determined by graph manifolds involved.
\newblock {\em Comment. Math. Helv.}, 74(2):238--247, 1999.

\end{thebibliography}

\end{document}